\newtheorem{remark}{Remark}[section]
\newtheorem{theorem}{Theorem}[section]
\newtheorem{lemma}[theorem]{Lemma}
\newtheorem{definition}[theorem]{Definition}
\numberwithin{equation}{section}
\begin{document}
%\hsize 16 true cm \baselineskip=20pt
%\parskip=1mm
%\parindent=20pt
\voffset=-1 true cm \setcounter{page} {1}
\title{Properties of the minimizers for a constrained minimization problem arising in Kirchhoff equation  }
\author{Helin Guo, Huan-Song Zhou\thanks{Corresponding
author.\newline Email address: H.L.Guo: qfguohelin@126.com; H.S.Zhou: hszhou@whut.edu.cn. \hfill
\newline This work was supported by  NFSC under Grants No. 11871387,  11931012 and partially by the fundamental Research Funds for the Central Universities(WUT: 2019IVA106, 2019IB009).}\\
\small Center for Mathematical Sciences and Department of Mathematics, \\
\small School of Science, Wuhan University of Technology, Wuhan, 430070, P. R. China
}
\date{}
\maketitle
%\medskip
{\bf Abstract}\quad Let $a>0,b>0$ and $V(x)\geq0$ be a coercive function in $\mathbb R^2$. We study the following constrained minimization problem on a suitable weighted Sobolev space $\mathcal{H}$:
\begin{equation*}
e_{a}(b):=\inf\left\{E_{a}^{b}(u):u\in\mathcal{H}\ \mbox{and}\ \int_{\mathbb R^{2}}|u|^{2}dx=1\right\},
\end{equation*}
where $E_{a}^{b}(u)$ is a Kirchhoff type energy functional defined on $\mathcal{H}$ by
\begin{equation*}
E_{a}^{b}(u)=\frac{1}{2}\int_{\mathbb R^{2}}[|\nabla u|^{2}+V(x)u^{2}]dx+\frac{b}{4}\left(\int_{\mathbb R^{2}}|\nabla u|^{2}dx\right)^{2}-\frac{a}{4}\int_{\mathbb R^{2}}|u|^{4}dx.
\end{equation*}
It is known that, for some $a^{\ast}>0$, $e_{a}(b)$ has no minimizer if $b=0$ and $a\geq a^{\ast}$, but $e_{a}(b)$ has always a minimizer for any $a\geq0$ if $b>0$. The aim of this paper is to investigate the limit behaviors of the minimizers of $e_{a}(b)$ as $b\rightarrow0^{+}$. Moreover, the uniqueness of the minimizers of $e_{a}(b)$ is also discussed for $b$ close to 0.\\
{\bf Keywords:} Kirchhoff type equation;\ Constrained variational problem;\ Energy estimates;\ Mass concentration;\ Uniqueness.\\
{\bf MSC:} 35J20; 35J60; 35A02.
\section{Introduction}
\quad In this paper, we are concerned with the following constrained minimization problem:
\begin{equation}\label{eq1.2}
e_{a}(b):=\inf\left\{E_{a}^{b}(u):u\in\mathcal{H}\ \mbox{and}\ \|u\|_{2}^{2}\triangleq\int_{\mathbb R^{2}}|u|^{2}dx=1\right\},
\end{equation}
where $\mathcal{H}$ is a weighted Sobolev space given by
\begin{equation*}
\mathcal{H}\triangleq\Big\{u\in H^{1}(\mathbb R^{2}):\int_{\mathbb R^{2}}V(x)u^{2}dx<\infty\Big\}\ \mbox{for some nonnegative}\ V(x)\in L_{loc}^{\infty}(\mathbb R^2),
\end{equation*}
and $E_{a}^{b}(u)$ is a Kirchhoff type energy functional as follows
\begin{equation}\label{eq1.2*}
E_{a}^{b}(u)=\frac{1}{2}\int_{\mathbb R^{2}}[|\nabla u|^{2}+V(x)u^{2}]dx+\frac{b}{4}\left(\int_{\mathbb R^{2}}|\nabla u|^{2}dx\right)^{2}-\frac{a}{4}\int_{\mathbb R^{2}}|u|^{4}dx,\ u\in \mathcal{H},
\end{equation}
$a$ and $b$ are positive parameters.

The above minimization problem arises in studying the following elliptic eigenvalue problem
\begin{equation}\label{eq1.1}
-\left(1+b\int_{\mathbb R^{2}}|\nabla u|^{2}dx\right)\Delta u+V(x)u=a|u|^{2}u+\mu u,\ \ x\in \mathbb{R}^{2},\ \mu\in\mathbb R,
\end{equation}
which is essentially a stationary (time independent) Kirchhoff equation, see e.g., \cite{ap,as1,K} for more backgrounds.
%In the past many years, if take $\mu$ as a fixed and assigned parameter,
For bounded $V(x)$,
problem \eqref{eq1.1} had been studied in many papers, see e.g., \cite{HL,HZ,LY} and the references therein.

It is known that a minimizer of problem \eqref{eq1.2} corresponds a solution of \eqref{eq1.1} with $\mu$ being a suitable Lagrange multiplier. When $b=0$, \eqref{eq1.1} with  given $\|u\|_{2}$ becomes the famous Gross-Pitaevskii (GP) equation (time independent case) which is important in the study of Bose-Einstein condensates (BEC), see, e.g., \cite{DG}. For this reason, problem \eqref{eq1.2} or \eqref{eq1.1} with $b=0$ has received a lot of interest in mathematics in recent years, see e.g.,
\cite{Bao2,GS,GLW,GZZ,GZZ3,GZ,Z1,Z}, provided $V(x)$ is a coercive potential, that is,
\begin{equation}\label{eq1.4}
V(x)\in L_{loc}^{\infty}(\mathbb R^{2},\mathbb R^{+}),\ \lim_{|x|\rightarrow\infty}V(x)=\infty\ \mbox{and}\ \inf_{x\in\mathbb R^{2}}V(x)=0.
\end{equation}
So, in order to compare more clearly the results of Kirchhoff problem \eqref{eq1.2} ($b>0$) with that of GP equations  \cite{Bao2,GS}, that is, $b=0$ in \eqref{eq1.2}, in this paper we only consider problem \eqref{eq1.2} in $\mathbb{R}^2$, but it is not difficulty to extend our results of the paper to $\mathbb{R}^n \ (n\geq 2)$ by using the results of \cite{GZZ2}.

Under \eqref{eq1.4}, in \cite{Bao2,GS,Z}, the authors proved that problem \eqref{eq1.2} with $b=0$ has a minimizer if
$a\in[0,a^{\ast})$ and has no minimizer if $a\geq a^{\ast}$, where $a^{\ast}=\|Q\|_{2}^{2}$ and $Q(x)$ is the unique (up to translations) radially symmetric positive solution of the equation %\cite{GNN,K1}
\begin{equation}\label{eq1.5}
-\Delta u+u=u^{3},\ u\in H^{1}(\mathbb R^{2}).
\end{equation}
Moreover, the concentration and symmetry breaking of minimizers were also studied in \cite{GS,GZZ3} as $a\nearrow a^{\ast}$ under different types of trapping potential, and the uniqueness of minimizers was proved in \cite{GLW} as $a$ close to $a^{\ast}$. But, when $b\neq0$, it was proved in a very recent paper \cite{GZZ2} that \eqref{eq1.2} has always a minimizer for all $a\geq0$ and $b>0$, that is, for each $b>0$ there is a minimizer for $e_{a}(b)$. Therefore, a nature question is what would happened if $b\rightarrow0^{+}?$ Intuitively, we may expect that the minimizers of $e_{a}(b)(b>0)$ should converge to a minimizer of $e_{a}(0)$ (i.e., $e_{a}(b)\ \mbox{with}\ b=0$). However, this may not be true at least for $a\geq a^{\ast}$, because $e_{a}(0)$ has no minimizer if $a\geq a^{\ast}$.

The aim of this paper is to give some detailed information on the limit behavior of the minimizers of $e_{a}(b)(b>0)$ as $b\to0^{+}$. Moreover, we are also interested in the uniqueness of minimizers of $e_{a}(b)$ with $b>0$ being small enough and any given $a\in[a^{\ast},+\infty)$. However, due to the presence of the nonlocal term $(\int_{\mathbb R^{2}}|\nabla u|^{2}dx)^{2}$ in \eqref{eq1.2*}, the methods used in \cite{GS,GZZ3} can not be followed directly in our case. Particularly, in discussing the uniqueness of the minimizers for $e_{a}(b)$ with $b>0$ and close to $0$, we have to encounter much more complicated and technical calculations than in \cite{GLW}. To overcome these difficulties, we need to use some new ideas in getting the energy estimates and proving the uniqueness of $e_{a}(b)$. Before giving the main results of the paper, we introduce the following auxiliary functional
\begin{equation}\label{eq1.6*}
\overline{E}_{a}^{b}(u)=\frac{1}{2}\int_{\mathbb R^{2}}|\nabla u|^{2}dx+\frac{b}{4}\left(\int_{\mathbb R^{2}}|\nabla u|^{2}dx\right)^{2}-\frac{a}{4}\int_{\mathbb R^{2}}|u|^{4}dx,
\end{equation}
and the constrained minimization problem
\begin{equation}\label{eq1.6}
\overline{e}_{a}(b):=\inf\left\{\overline{E}_{a}^{b}(u):u\in H^{1}(\mathbb R^2)\ \mbox{and}\ \int_{\mathbb R^{2}}|u|^{2}dx=1\right\}.
\end{equation}
When $b>0$, it was proved in \cite[Theorem 1.1]{GZZ2} that $\overline{e}_{a}(b)$ in \eqref{eq1.6} has a minimizer if and only if $a>a^{\ast}$, but $e_{a}(b)$ in \eqref{eq1.2} has always a minimizer for any $a\geq0$, see, e.g.,\cite[Theorem 1.2]{GZZ2}.
Since $|\nabla u|=|\nabla|u||$ holds for a.e.$x\in\mathbb R^2$, without loss of generality, we always suppose that minimizers of $e_{a}(b)$ and $\overline{e}_{a}(b)$ are nonnegative. Now, we state our results as follows.
\begin{theorem}\label{th1}
Suppose that $V(x)$ satisfies \eqref{eq1.4} and $V(x)\in C^\alpha_{loc}(\mathbb{R}^2)$ with some $\alpha\in(0,1)$. For any given $a\geq a^\ast$, let $u_{k}$ be a nonnegative minimizer of $e_{a}(b_{k})$ with $b_{k}\xrightarrow{k\to\infty}0^{+}$. Then, there exists a subsequence of $\{u_{k}\}$, still denoted by $\{u_{k}\}$, such that each $u_{k}$ has a unique global maximum point $z_{k}$ satisfying
\begin{equation}\label{eq1.7}
\lim_{k\rightarrow\infty}z_{k}=x_{0}\ \mbox{with}\ x_{0}\in\mathbb R^{2}\ \mbox{and}\ V(x_{0})=0,
\end{equation}
\begin{equation}\label{eq1.8}
\lim_{k\rightarrow+\infty}\epsilon_{k}u_{k}(\epsilon_{k}x+z_{k})=\frac{Q(x)}{\sqrt{a^{\ast}}} \ \mbox{in} \ H^{1}(\mathbb R^{2}),
\end{equation}
where $Q(x)$ is the unique positive solution of \eqref{eq1.5}, and  $\epsilon_{k}\xrightarrow{k\to\infty}0^+$ which is given by
\begin{equation}\label{eq1.9}
\epsilon_{k}=
\begin{cases}
\left(\int_{\mathbb R^2}|\nabla u_{k}|^2dx\right)^{-\frac{1}{2}}\quad \mbox{for}\ a=a^\ast,\\
\left(\frac{b_{k}a^{\ast}}{a-a^{\ast}}\right)^{\frac{1}{2}}\quad\quad\quad\quad\  \mbox{for}\ a>a^\ast.
\end{cases}
\end{equation}
Moreover, if $a>a^{\ast}$,
 \begin{equation}\label{eq1.10}
   e_{a}(b_{k})=-\frac{1}{4b_{k}}\left(\frac{a-a^{\ast}}{a^{\ast}}\right)^{2}(1+o(1))\ \mbox{as}\ k\rightarrow\infty.
   \end{equation}
\end{theorem}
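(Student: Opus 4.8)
The plan is to reduce the whole analysis to the sharp Gagliardo--Nirenberg inequality
\begin{equation*}
\int_{\mathbb R^2}|u|^4\,dx\le \frac{2}{a^\ast}\Big(\int_{\mathbb R^2}|\nabla u|^2\,dx\Big)\Big(\int_{\mathbb R^2}|u|^2\,dx\Big),
\end{equation*}
whose equality cases are exactly the rescaled translates of $Q$, together with the identities $\int_{\mathbb R^2}|\nabla Q|^2=\int_{\mathbb R^2}Q^2=\tfrac12\int_{\mathbb R^2}Q^4=a^\ast$. I would first pin down the asymptotics of $e_a(b_k)$. Writing $t=\int_{\mathbb R^2}|\nabla u|^2$, dropping the nonnegative term $\int V u^2$, and inserting the inequality above gives, for $\|u\|_2=1$, the bound $E_a^{b}(u)\ge g_b(t):=\tfrac12(1-\tfrac{a}{a^\ast})t+\tfrac{b}{4}t^2$. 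For $a>a^\ast$ the convex function $g_b$ attains its minimum $-\frac{(a-a^\ast)^2}{4(a^\ast)^2b}$ at $t^\ast_b=\frac{a-a^\ast}{a^\ast b}$, yielding the lower bound in \eqref{eq1.10}. A matching upper bound comes from the normalized, cut-off trial function $u_\tau(x)=\frac{\tau}{\sqrt{a^\ast}}Q(\tau(x-x_0))$ centred at a point $x_0$ with $V(x_0)=0$ and optimized in $\tau$ (with cut-off radius tending to $\infty$ slowly); since $V$ is continuous and the bump concentrates, $\int V u_\tau^2=o(1)$, so $e_a(b_k)\le g_{b_k}(t^\ast_{b_k})+o(1)$, which proves \eqref{eq1.10}. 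Feeding this sharp value back into the exact decomposition $E_a^{b}(u)=g_b(t)+\tfrac12\int V u^2+\tfrac a4\big(\tfrac{2}{a^\ast}t-\int u^4\big)$ forces, for a minimizer $u_k$, that $\int V u_k^2\to0$, that the Gagliardo--Nirenberg deficit is negligible, and that $t_k=\int|\nabla u_k|^2=\frac{a-a^\ast}{a^\ast b_k}(1+o(1))$. For $a=a^\ast$ the coefficient of $t$ vanishes; there I would instead show $e_{a^\ast}(b_k)\to0^+$ with the same trial function, and prove $\int|\nabla u_k|^2\to\infty$ by contradiction: a bounded gradient would, via the compact embedding $\mathcal H\hookrightarrow L^4$ coming from the coercivity of $V$, produce a minimizer of $e_{a^\ast}(0)$, which does not exist.

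With the energy pinned down, I would rescale. Setting $\epsilon_k$ as in \eqref{eq1.9} and $\widetilde w_k(x)=\epsilon_k u_k(\epsilon_k x+y_k)$ for a centre $y_k$ chosen via a concentration-function argument to rule out vanishing, one has $\|\widetilde w_k\|_2=1$ and $\int|\nabla \widetilde w_k|^2\to1$ in both cases. The decisive point is that the Gagliardo--Nirenberg inequality becomes asymptotically saturated: the vanishing of the rescaled deficit forces $\int \widetilde w_k^4\to\frac{2}{a^\ast}=\int(Q/\sqrt{a^\ast})^4$. Invoking the compactness of maximizing sequences for the Gagliardo--Nirenberg inequality (vanishing excluded by the nontrivial $L^4$ mass, dichotomy by strict subadditivity), I obtain $\widetilde w_k\rightharpoonup Q/\sqrt{a^\ast}$ weakly in $H^1$, the scale being fixed to $1$ by the two norm constraints and the translation absorbed into $y_k$. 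Because $\|\widetilde w_k\|_2^2\to1=\|Q/\sqrt{a^\ast}\|_2^2$ and $\int|\nabla \widetilde w_k|^2\to1=\int|\nabla(Q/\sqrt{a^\ast})|^2$, weak convergence together with convergence of the norms upgrades to strong convergence in $H^1$. To locate the concentration I would use $\int V u_k^2=\int V(\epsilon_k x+y_k)\widetilde w_k^2\,dx\to0$: coercivity of $V$ keeps $\{y_k\}$ bounded (otherwise the concentrated mass would sit where $V\to\infty$), so $y_k\to x_0$ along a subsequence, and continuity of $V$ with Fatou's lemma gives $V(x_0)\le\liminf\int V u_k^2=0$, i.e.\ $V(x_0)=0$, which is \eqref{eq1.7}.

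It remains to identify the recentering point with the genuine global maximum of $u_k$ and to prove its uniqueness. Here I would pass to the Euler--Lagrange equation $-(1+b_k t_k)\Delta u_k+V u_k=a u_k^3+\mu_k u_k$ and rescale it; since $1+b_k t_k\to a/a^\ast>0$ and $\epsilon_k^2\mu_k$ converges, $\widetilde w_k$ solves a uniformly elliptic equation whose potential term $\epsilon_k^2 V(\epsilon_k x+y_k)$ is Hölder continuous thanks to $V\in C^\alpha_{loc}$. Standard elliptic bootstrapping then upgrades the convergence to $\widetilde w_k\to Q/\sqrt{a^\ast}$ in $C^2_{loc}$, and a comparison-principle argument gives a uniform exponential decay estimate $\widetilde w_k(x)\le C e^{-\delta|x|}$. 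Combined with the nondegeneracy of the unique maximum of $Q/\sqrt{a^\ast}$ at the origin, $C^1_{loc}$ convergence (via the implicit function theorem) produces, for large $k$, a single critical point of $\widetilde w_k$ near $0$, while the uniform decay confines all near-maximal values to a fixed ball; hence $\widetilde w_k$ has a unique global maximum converging to $0$. Translating back, $u_k$ has a unique global maximum $z_k$ with $z_k\to x_0$, and recentering at $z_k$ is a shift of size $o(\epsilon_k)$ that preserves \eqref{eq1.8}. I expect this last step to be the main obstacle: obtaining regularity and decay estimates that are \emph{uniform in $k$} for the nonlocal, $b_k$-dependent equation---crucially using that $1+b_k t_k$ stays bounded away from $0$ and $\infty$---and thereby excluding secondary concentration bumps, is the part where the Kirchhoff nonlocality and the mere $C^\alpha$ regularity of $V$ demand the most care.
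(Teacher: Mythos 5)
Your proposal is correct, and while its energy analysis coincides with the paper's, it identifies the blow-up profile by a genuinely different mechanism. On the energy side you reproduce the paper's Lemmas 2.1--2.4 almost exactly: the same lower bound via the quadratic $h(r)=\frac{b}{4}r^2-\frac{a-a^\ast}{2a^\ast}r$, the same cut-off trial function $A_\tau\tau\,\xi(x-x_0)Q(\tau(x-x_0))/\sqrt{a^\ast}$ centred at a zero of $V$, and the same compact-embedding contradiction (non-existence of a minimizer for $e_{a^\ast}(0)$) to force $\int|\nabla u_k|^2\to\infty$ when $a=a^\ast$. In fact your observation that $g_b(t)-g_b(t_b^\ast)=\frac{b}{4}(t-t_b^\ast)^2$, so that the additive $o(1)$ energy match immediately yields $t_k/t_{b_k}^\ast\to1$ together with the vanishing of the Gagliardo--Nirenberg deficit and of $\int Vu_k^2$, is cleaner than the paper's Lemma 2.3, which derives $\int|\nabla u_b|^2/r_b\to1$ by a contradiction comparing $h(\delta r_b)$ with $h(r_b)$. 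The divergence is at the profile identification: the paper rescales at the global maximum point $z_k$ from the outset, uses the Euler--Lagrange equation \eqref{eq2.35}, a De Giorgi--Nash--Moser bound plus the vanishing lemma to get $w_k(0)\geq\eta$ and \eqref{eq2.33}, passes to the weak limit in the rescaled PDE, and pins down $w_0$ through uniqueness of the positive solution of $-\Delta w+w=a^\ast w^3$; you instead stay at the purely variational level, recentre by a concentration function, and invoke compactness modulo translations of Gagliardo--Nirenberg maximizing sequences (your exclusion of vanishing by the persistent $L^4$ mass and of dichotomy by the strict gain $\max(\lambda,1-\lambda)<1$ in the split GN estimate does close this), bringing in the PDE only afterwards for the $C^2_{loc}$ upgrade, decay, and maximum-point analysis. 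Both routes finish identically (elliptic regularity, uniform decay, nondegeneracy/radial monotonicity of $Q$). Your route buys a shorter path to strong $H^1$ convergence without first computing the multiplier limit, at the cost of importing the GN-compactness theorem as a black box that must be cited or proved; note that you still need $\mu_k\epsilon_k^2\to-a/a^\ast$ (which follows from \eqref{eq2.37} and your norm asymptotics, including $b_kt_k\to\frac{a-a^\ast}{a^\ast}$, so that $1+b_kt_k\to a/a^\ast$ as you say) for the comparison-principle decay, and that your a posteriori migration of the centre $y_k$ to the true maximum is legitimate precisely because the maximum of the rescaled profile tends to the origin, making the shift $o(\epsilon_k)$ as you claim.
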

\begin{remark}\label{remark}
If $V(x)$ satisfies \eqref{eq1.4}, it is known by \cite[Theorem 2.1]{BW} or \cite[Theorem XIII.67]{RS} that the embedding from $\mathcal{H}$ into $L^{s}(\mathbb R^{2})(2\leq s<\infty)$ is compact. Hence, for $a\in[0,a^{\ast})$, similar to the proof of Theorem 1.4 in \cite{GZZ2}, we know that a minimizer $u_{b}$ of \eqref{eq1.2} must converge to a minimizer $u_{0}$ of $e_{a}(0)$ as $b\rightarrow 0^{+}$.
\end{remark}

For $a\geq a^\ast$, Theorem \ref{th1} shows that the nonnegative minimizers of \eqref{eq1.2} concentrate at a global minimum point of $V(x)$ as $b\rightarrow0^{+}$. But, under the general coercive condition \eqref{eq1.4}, it seems impossible to have more detailed information about the location of $x_{0}$ and the blowup rates of $\epsilon_{k}$. Motivated by \cite{CLL,GS,GZZ,GZZ3}, in what follows, we give some additional assumptions on $V(x)$, with which we may refine the results of Theorem \ref{th1} by establishing the optimal energy estimates of $e_{a}(b)$.
\begin{definition}\label{de1}
A function $f(x)$ is called homogeneous of degree $q\in\mathbb R^{+}$(about the origin) if there exists some $q>0$ such that
\begin{equation*}
f(tx)=t^{q}f(x),\ \mbox{in}\ \mathbb R^{2}\ \mbox{for\ any}\ t>0.
\end{equation*}
The above definition implies that if $f(x)\in C(\mathbb R^{2},\mathbb R^+)$ is homogeneous of degree $q>0$, then
\begin{equation*}
0\leq f(x)\leq C|x|^q\ \mbox{in}\ \mathbb R^2,
\end{equation*}
where $C$ denotes the maximum of $f(x)$ on $\partial B_{1}(0)$. Moreover, if $f(x)\rightarrow\infty\ as\ |x|\rightarrow\infty$, then 0 is the unique minimum point of $f(x)$.
\end{definition}
Inspired by \cite{GS}, we assume that $V(x)$ has exactly $m$ global minimum points, namely
\begin{equation}\label{eq1.13}
Z:=\{x\in\mathbb R^2:V(x)=0\}=\{x_1,x_2,...x_m\},\ \mbox{where}\ m\geq1.
\end{equation}
We then assume that $V(x)$ is almost homogeneous of degree $p_i>0$ around $x_i$, i.e., there exists some $V_i(x)\in C_{loc}^2(\mathbb R^2)$ satisfying $\displaystyle\lim_{|x|\rightarrow\infty}V_i(x)=\infty$, which is homogeneous of degree $p_i>0$, such that
\begin{equation}\label{eq1.14}
\lim_{x\rightarrow0}\frac{V(x+x_i)}{V_i(x)}=1,\ i=1,2,...m.
\end{equation}
Additionally, we define $H_i(y)$ by
\begin{equation}\label{eq1.15}
H_i(y):=\int_{\mathbb R^2}V_i(x+y)Q^2(x)dx,\ i=1,2,...m.
\end{equation}
Set
\begin{equation}\label{eq1.16}
p:=\max_{1\leq i\leq m}p_i,\ \mbox{and}\ \overline{Z}:=\{x_i\in Z:p_i=p\}\subset Z.
\end{equation}
Define
\begin{equation}\label{eq1.17}
\lambda_0:=\min_{i\in \Lambda}\lambda_i,\ \mbox{where}\ \lambda_i:=\min_{y\in\mathbb R^2}H_i(y)\ \mbox{and}\ \Lambda:=\{i:x_i\in\overline{Z}\}
\end{equation}
Denote
\begin{equation}\label{eq1.18}
Z_0:=\{x_i\in\overline{Z}:\lambda_i=\lambda_0\}
\end{equation}
the set of the flattest minimum points of $V(x)$. Under the above assumptions, our following theorem gives a precise description on the concentration behavior of the minimizers of \eqref{eq1.2} as $b\rightarrow0^+$.
\begin{theorem}\label{th2}
Assume that $V(x)$ satisfies conditions \eqref{eq1.4} and \eqref{eq1.14}. For $a\geq a^{\ast}$, let $u_k$ be a nonnegative minimizer of $e_{a}(b_k)$ as in Theorem \ref{th1} with $b_{k}\xrightarrow{k\to\infty}0^{+}$ and $z_{k}$ be the unique maximum point of $u_{k}$. Then,
\begin{equation}\label{eq1.19}
\lim_{k\rightarrow+\infty}\overline{\epsilon}_{k}u_{k}(\overline{\epsilon}_{k}x+z_{k})=\frac{Q(x)}{\sqrt{a^{\ast}}} \ \mbox{in} \ H^{1}(\mathbb R^{2}),
\end{equation}
where $\overline{\epsilon}_{k}$ is given by
\begin{equation}\label{eq1.20}
\overline{\epsilon}_{k}:=
\begin{cases}
\left(\frac{2b_{k}a^\ast}{p\lambda_0}\right)^{\frac{1}{p+4}} \ \ \quad\quad\mbox{for}\ a=a^\ast,\\
\epsilon_k=\left(\frac{b_{k}a^{\ast}}{a-a^{\ast}}\right)^{\frac{1}{2}}\quad\ \mbox{for}\ a>a^\ast,
\end{cases}
\end{equation}
and $z_{k}$ satisfies
\begin{equation}\label{eq1.21**}
\lim_{k\rightarrow\infty}\frac{z_{k}-x_{0}}{\overline{\epsilon}_{k}}=y_0
\end{equation}
with $x_{0}=x_{i_0}\in Z_0$ for some $1\leq i_{0}\leq m$, and $y_{0}\in\mathbb R^2$ satisfying $H_{i_0}(y_0)=\displaystyle\min_{y\in\mathbb R^2}H_{i_0}(y)=\lambda_0$.

Moreover, if $a=a^\ast$,
\begin{equation}\label{eq1.21*}
\lim_{k\rightarrow\infty}\frac{e_{a^\ast}(b_{k})}{b_{k}^{\frac{p}{p+4}}}=\frac{4+p}{4p}\left(\frac{p\lambda_{0}}{2a^\ast}\right)^{\frac{4}{p+4}}.
\end{equation}
\end{theorem}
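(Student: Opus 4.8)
The plan is to determine the precise blow-up rate and concentration point by proving matching sharp upper and lower energy bounds for $e_a(b_k)$, upgrading the qualitative concentration of Theorem~\ref{th1}. I would repeatedly use the identities $\int_{\mathbb R^2}Q^2=\int_{\mathbb R^2}|\nabla Q|^2=a^\ast$ and $\int_{\mathbb R^2}Q^4=2a^\ast$ coming from \eqref{eq1.5} and its Pohozaev identity, so that $Q/\sqrt{a^\ast}$ is an $L^2$-normalized optimizer of the Gagliardo--Nirenberg inequality $\int u^4\le\frac{2}{a^\ast}\int|\nabla u|^2\int u^2$. For the upper bound I would test $E_a^{b_k}$ at a function concentrated at a flattest minimum: fixing $x_{i_0}\in Z_0$, $y\in\mathbb R^2$ and a cut-off supported near $x_{i_0}$ (costing only exponentially small errors, since $Q$ decays exponentially), set $\phi_\tau(x)=\frac{\tau}{\sqrt{a^\ast}}Q(\tau(x-x_{i_0})-y)$ with $\|\phi_\tau\|_2=1$. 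A change of variables gives $\int|\nabla\phi_\tau|^2=\tau^2$, $\int\phi_\tau^4=\frac{2\tau^2}{a^\ast}$, and \eqref{eq1.14} together with the homogeneity of $V_{i_0}$ gives $\int V\phi_\tau^2=\frac{H_{i_0}(y)}{a^\ast}\tau^{-p}(1+o(1))$, whence
\[ E_a^{b_k}(\phi_\tau)=-\frac{a-a^\ast}{2a^\ast}\tau^2+\frac{b_k}{4}\tau^4+\frac{H_{i_0}(y)}{2a^\ast}\tau^{-p}(1+o(1)). \]
Optimizing in $\tau$ and choosing $y$ with $H_{i_0}(y)=\lambda_0$ yields the rate $\overline\epsilon_k=1/\tau^\ast$ in \eqref{eq1.20}, the sharp upper bound for $e_{a^\ast}(b_k)$ consistent with \eqref{eq1.21*}, and for $a>a^\ast$ the expansion \eqref{eq1.10} with subleading term $\frac{\lambda_0}{2a^\ast}\epsilon_k^{p}$.

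For the lower bound I would invoke Theorem~\ref{th1}: passing to a subsequence, $z_k\to x_{i_0}\in Z$ and $v_k(x):=\epsilon_k u_k(\epsilon_k x+z_k)\to Q/\sqrt{a^\ast}$ in $H^1$, so that $\int v_k^4\to\frac{2}{a^\ast}$. Setting $d_k:=z_k-x_{i_0}$ and splitting
\[ e_a(b_k)=\Big[\tfrac12\!\int|\nabla u_k|^2-\tfrac a4\!\int u_k^4\Big]+\tfrac{b_k}{4}\Big(\!\int|\nabla u_k|^2\Big)^2+\tfrac12\!\int V u_k^2, \]
the Gagliardo--Nirenberg inequality bounds the bracket below by $0$ when $a=a^\ast$, and by $-\frac{1}{4b_k}(\frac{a-a^\ast}{a^\ast})^2$ plus a nonnegative deficit when $a>a^\ast$. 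For the potential term, a Fatou/scaling argument based on \eqref{eq1.14} gives
\[ \tfrac12\!\int V u_k^2\ge\frac{\epsilon_k^{p_{i_0}}}{2a^\ast}H_{i_0}(d_k/\epsilon_k)(1+o(1))\ge\frac{\lambda_{i_0}}{2a^\ast}\epsilon_k^{p_{i_0}}(1+o(1)), \]
where coercivity of $V_{i_0}$ first forces $d_k/\epsilon_k$ to stay bounded. Minimizing $\frac{\lambda_{i_0}}{2a^\ast}\epsilon^{p_{i_0}}+\frac{b_k}{4}\epsilon^{-4}$ over $\epsilon$ then produces a lower bound of order $b_k^{p_{i_0}/(p_{i_0}+4)}$ (for $a=a^\ast$), respectively the leading term of \eqref{eq1.10} plus a contribution of order $\epsilon_k^{p_{i_0}}$ (for $a>a^\ast$).

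The two bounds are then matched. Because $b_k\to0^+$ and $t\mapsto t/(t+4)$ is increasing, comparing the lower order $b_k^{p_{i_0}/(p_{i_0}+4)}$ with the upper order $b_k^{p/(p+4)}$ forces $p_{i_0}=p$; since $\lambda_0=\min_{i\in\Lambda}\lambda_i$, the resulting inequality $\lambda_{i_0}\le\lambda_0$ forces $\lambda_{i_0}=\lambda_0$, i.e.\ $x_{i_0}\in Z_0$, and simultaneously drives the dropped nonnegative deficits to lower order. Equality in the matched bounds forces $H_{i_0}(d_k/\epsilon_k)\to\lambda_0$, hence $d_k/\overline\epsilon_k\to y_0$ with $H_{i_0}(y_0)=\lambda_0$, which is \eqref{eq1.21**}. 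For $a=a^\ast$ it further forces $\epsilon_k/\overline\epsilon_k\to1$, since after rescaling the profile $g(s)=\frac{4s^p+ps^{-4}}{p+4}$ has a strict minimum $g(1)=1$; this yields \eqref{eq1.19} and the sharp energy \eqref{eq1.21*}. For $a>a^\ast$, \eqref{eq1.19} is immediate from Theorem~\ref{th1} because $\overline\epsilon_k=\epsilon_k$.

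The main obstacle is the sharp, constant-precise control of the potential energy $\int V u_k^2$. Converting the mere $H^1$-convergence $v_k\to Q/\sqrt{a^\ast}$ into the lower bound with the exact constant $H_{i_0}(d_k/\epsilon_k)$ requires exploiting \eqref{eq1.14} on the concentration scale (via the Fatou lemma and local convergence) while controlling the far field, where $V$ grows but $v_k$ is small; one must also verify that the Gagliardo--Nirenberg deficit enters only at lower order, so that the constant in \eqref{eq1.21*} is exact. This is precisely where the nonlocal term $\frac{b}{4}(\int|\nabla u|^2)^2$ makes the balance, and hence the scale $\overline\epsilon_k$, more delicate than in the $b=0$ theory of \cite{GS,GZZ3}.
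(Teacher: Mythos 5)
Your proposal is correct and follows essentially the same route as the paper: the same cut-off rescaled-$Q$ trial functions centered at $x_{i_0}+\tau^{-1}y_0$ give the sharp upper bound, and the same Gagliardo--Nirenberg/Young splitting together with the scaled Fatou estimate of $\int_{\mathbb R^2}V|u_k|^2$ (which, exactly as in the paper's claim \eqref{eq3.4}, rules out $p_{i_0}<p$ and unbounded $(z_k-x_{i_0})/\epsilon_k$ by comparison with the upper bound) yields the matching lower bound, forcing $\lambda_{i_0}=\lambda_0$, $H_{i_0}(y_0)=\lambda_0$ and $\epsilon_k/\overline{\epsilon}_k\to1$ via the strict minimum of your profile $g$. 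The only cosmetic difference is that you treat $a=a^\ast$ and $a>a^\ast$ in one unified computation, whereas the paper handles $a>a^\ast$ by comparing $e_a(b_k)$ with the auxiliary energy $\overline{e}_a(b_k)$ of Lemma \ref{lemma2.1}; since $\overline{u}_{b}$ is itself a rescaled $Q$, this is the same argument.
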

Theorem \ref{th2} shows that the nonnegative minimizers of $e_{a}(b)$ must concentrate at one of the flattest global minimum point of $V(x)$, as $b\rightarrow0^+$. Different from the discussions in \cite{GS}, in our case the nonlocal term $(\int_{\mathbb R^{2}}|\nabla u|^{2}dx)^{2}$ causes some new difficulties in analyzing the asymptotic behavior of the nonnegative minimizers for $e_{a}(b)$.

Finally, we are concerned with the uniqueness of the minimizers of $e_{a}(b)$ as $b$ close to 0, under some further assumptions of $V(x)$. Motivated by the uniqueness results addressed in \cite{GLW}, We assume that $V(x)$ has a unique flattest global minimum point, i.e., $Z_0$ defined in \eqref{eq1.18} contains only one element. Our uniqueness results can be stated as follows.
\begin{theorem}\label{th4}
Suppose that $V(x)\in C^{2}(\mathbb R^2)$ satisfies \eqref{eq1.4} and \eqref{eq1.14}. Let $Z_0$ in \eqref{eq1.18} have only one point $x_{1}$, and
\begin{equation}\label{eq1.21}
y_0\ \mbox{is the unique and non-degenerate critical point of}\ H_1(y)\ \mbox{defined by}\ \eqref{eq1.15}.
\end{equation}
If there exist $\beta>0$ and $R_0>0$ such that
\begin{equation}\label{eq1.22}
V(x)\leq Ce^{\beta|x|},\ \mbox{if}\ |x|\ \mbox{is large},
\end{equation}
and
\begin{equation}\label{eq1.23}
\frac{\partial V(x+x_1)}{\partial x_j}=\frac{\partial V_1(x)}{\partial x_j}+W_j(x)\ \mbox{and}\ |W_j(x)|\leq C|x|^{s_j}\ \mbox{in}\ B_{R_0}(0),
\end{equation}
where $s_j>p-1$ for $j=1,2$. Then, for $a\geq a^{\ast}$, there exists a unique nonnegative minimizer for $e_{a}(b)$ as $b>0$ being small enough.
\end{theorem}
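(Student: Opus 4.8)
The plan is to argue by contradiction through a blow-up analysis of the difference of two minimizers, extending the local uniqueness scheme of \cite{GLW} (developed for the local case $b=0$) to the present nonlocal Kirchhoff setting. Suppose the conclusion fails: there is a sequence $b_k\to0^+$ for which $e_a(b_k)$ has two distinct nonnegative minimizers $u_{1,k}$ and $u_{2,k}$. Each $u_{i,k}$ solves the Euler--Lagrange equation
\begin{equation*}
-\Big(1+b_k\int_{\mathbb R^2}|\nabla u_{i,k}|^2dx\Big)\Delta u_{i,k}+V(x)u_{i,k}=a\,u_{i,k}^3+\mu_{i,k}u_{i,k},\quad i=1,2,
\end{equation*}
with Lagrange multipliers $\mu_{i,k}\in\mathbb R$. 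By Theorem~\ref{th2}, since \eqref{eq1.21} forces $Z_0=\{x_1\}$ and $y_0$ to be the unique critical point of $H_1$, both rescaled sequences converge to the same profile: writing $z_{i,k}$ for the unique maximum point of $u_{i,k}$, one has $\bar\epsilon_k u_{i,k}(\bar\epsilon_k x+z_{i,k})\to Q(x)/\sqrt{a^\ast}$ in $H^1(\mathbb R^2)$ and $(z_{i,k}-x_1)/\bar\epsilon_k\to y_0$, in both regimes $a=a^\ast$ and $a>a^\ast$ (with $\bar\epsilon_k$ as in \eqref{eq1.20}). In particular $(z_{2,k}-z_{1,k})/\bar\epsilon_k\to0$, so both minimizers, rescaled around $z_{1,k}$, share the limiting profile $Q/\sqrt{a^\ast}$.

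Next I would form and normalize the difference. Set $\xi_k:=u_{1,k}-u_{2,k}$ and, after rescaling $v_{i,k}(x):=\bar\epsilon_k u_{i,k}(\bar\epsilon_k x+z_{1,k})$, study
\begin{equation*}
\eta_k(x):=\frac{v_{1,k}(x)-v_{2,k}(x)}{\|v_{1,k}-v_{2,k}\|_{L^\infty(\mathbb R^2)}},\qquad \|\eta_k\|_{L^\infty}=1.
\end{equation*}
Subtracting the two rescaled equations, $\eta_k$ satisfies a linear elliptic equation whose coefficients involve $a(v_{1,k}^2+v_{1,k}v_{2,k}+v_{2,k}^2)$, the rescaled potential $\bar\epsilon_k^2V(\bar\epsilon_k x+z_{1,k})$, the rescaled multipliers, and---crucially---two source terms produced by the nonlocal coupling, namely the difference $b_k(\|\nabla u_{1,k}\|_2^2-\|\nabla u_{2,k}\|_2^2)$ multiplying $\Delta v_{2,k}$, and the difference $\mu_{1,k}-\mu_{2,k}$. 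Establishing uniform $C^2_{loc}$ and exponential decay estimates on $v_{i,k}$---here \eqref{eq1.22} is used to control the potential---one passes to the limit and finds $\eta_k\to\eta_0$ in $C^2_{loc}(\mathbb R^2)$, where $\eta_0$ solves $\mathcal L\eta_0=g_0$ with $\mathcal L:=-\Delta+1-3Q^2$ and $g_0$ the limiting contribution of the multiplier and nonlocal source terms.

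The heart of the argument is to show $\eta_0\equiv0$, which I would do in two stages. Since $Q$ is nondegenerate, $\ker\mathcal L=\mathrm{span}\{\partial_{x_1}Q,\partial_{x_2}Q\}$ (a well-known fact, cf. \cite{GLW}). First, identifying $g_0$ and invoking the mass constraint $\int_{\mathbb R^2}(u_{1,k}+u_{2,k})\xi_k\,dx=0$ together with refined energy expansions---sharper than those used for Theorems~\ref{th1}--\ref{th2}, which pin down the multiplier and nonlocal differences---one shows that $\eta_0\in\ker\mathcal L$, i.e. $\eta_0=\sum_{j=1}^2 c_j\partial_{x_j}Q$. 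Second, to annihilate the translation coefficients $c_j$, I would expand $V$ around $x_1$ through \eqref{eq1.14} and \eqref{eq1.23} (with the remainders $W_j$ controlled by $|x|^{s_j}$, $s_j>p-1$) and apply Pohozaev-type identities to the difference equation; the nondegeneracy hypothesis \eqref{eq1.21} on $H_1$---that $y_0$ is a nondegenerate critical point---then yields an invertible linear system for the $c_j$, forcing $c_1=c_2=0$ and hence $\eta_0\equiv0$.

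Finally, I would derive the contradiction. Because $\|\eta_k\|_{L^\infty}=1$ is attained at some point $x_k$, the uniform decay estimates (again from \eqref{eq1.22}) prevent $x_k$ from escaping to infinity, so along a subsequence $x_k\to\bar x$ and $|\eta_0(\bar x)|=1$, contradicting $\eta_0\equiv0$. This forces $u_{1,k}=u_{2,k}$ for all large $k$, i.e. $e_a(b)$ has a unique nonnegative minimizer for $b>0$ small. The main obstacle I anticipate is the treatment of the nonlocal Kirchhoff term: unlike in \cite{GLW}, the difference equation carries the extra source $b_k(\|\nabla u_{1,k}\|_2^2-\|\nabla u_{2,k}\|_2^2)\Delta v_{2,k}$, and obtaining error estimates sharp enough to identify its limiting contribution to $g_0$---consistently across the two different blow-up rates $\bar\epsilon_k$ for $a=a^\ast$ and $a>a^\ast$---is where the bulk of the delicate and technical computations will lie.
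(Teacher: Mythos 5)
Your overall scheme---contradiction, rescaling about the maximum point $z_{1,k}$ of the first minimizer, the normalized difference $\eta_k$, uniform exponential decay via \eqref{eq1.22}, a limiting linearized equation, Pohozaev identities plus the nondegeneracy \eqref{eq1.21}, and the final bounded-maximum-point contradiction---is the same as the paper's. The genuine gap is in your first stage, where you claim that the mass constraint $\int_{\mathbb R^2}(u_{1,k}+u_{2,k})\xi_k\,dx=0$ together with refined energy expansions forces $\eta_0\in\ker\mathcal L$ with $\mathcal L=-\Delta+1-3Q^2$. This step would fail. The limit equation is genuinely inhomogeneous---\eqref{eq4.14} for $a=a^\ast$ and \eqref{eq4.14*} for $a>a^\ast$---with right-hand side depending nonlocally on $\eta_0$ through $\int_{\mathbb R^2}Q^3\eta_0\,dx$ and (for $a>a^\ast$) $\int_{\mathbb R^2}\nabla Q\cdot\nabla\eta_0\,dx$, and its solution set is strictly larger than $\ker\mathcal L$: one obtains $\eta_0=d_0(Q+x\cdot\nabla Q)+\sum_{i=1}^2 d_i\frac{\partial Q}{\partial x_i}$ when $a=a^\ast$ as in \eqref{eq4.10}, with the further modes $h_0Q+\overline h_0\,x\cdot\nabla Q$ when $a>a^\ast$ as in \eqref{eq4.10*}. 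The mass constraint only yields $\int_{\mathbb R^2}Q\,\eta_0\,dx=0$ in the limit, and in dimension two the dilation mode passes this test automatically, since $\int_{\mathbb R^2}Q(Q+x\cdot\nabla Q)\,dx=\int_{\mathbb R^2}Q^2\,dx+\frac12\int_{\mathbb R^2}x\cdot\nabla Q^2\,dx=0$; so no orthogonality argument of this kind can remove $d_0$ (or $h_0,\overline h_0$). Consequently your second stage, the translation Pohozaev identities, gives only the coupled relations \eqref{eq4.15}--\eqref{eq4.15*}, an underdetermined system in which the nondegeneracy of $H_1$ kills $d_1,d_2$ (resp.\ $h_1,h_2$) only after $d_0$ (resp.\ $h_0,\overline h_0$) is already known to vanish.

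The missing idea is a second, dilation-type local Pohozaev identity: multiply the equation by $(x-z_{1,k})\cdot\nabla\widehat u_{i,k}$, integrate over $B_{\delta_k}(z_{1,k})$ with $\delta_k$ chosen so that all boundary terms are exponentially small, and combine with the multiplier identity \eqref{eq4.43}. For $a=a^\ast$ this produces $\frac{p(p+4)}{2}\,d_0\lambda_0(1+o(1))\,\overline\epsilon_k^{\,p+4}=o(e^{-C\delta_k/\overline\epsilon_k})$, hence $d_0=0$. For $a>a^\ast$ it is precisely the Kirchhoff term---the obstacle you flagged---that closes the argument: it contributes $2h_0(a-a^\ast)(1+o(1))\,\overline\epsilon_k^{\,2}$ against potential contributions of order $\overline\epsilon_k^{\,p+4}$, and this mismatch of orders forces $h_0=0$; with $h_0=0$ the nonlocal source in \eqref{eq4.14*} drops out, the limit equation reduces to \eqref{eq4.14}, and $\overline h_0=0$ follows by comparing with \eqref{eq4.10}. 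Only at that point does your invertible-system argument via \eqref{eq1.21} conclude $\eta_0\equiv0$. So the skeleton of your proposal is correct, but the assertion that the mass constraint pins $\eta_0$ into $\ker\mathcal L$ is a step that would fail, and the dilation Pohozaev identity together with the order analysis of the nonlocal term is the essential ingredient you would need to supply.
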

This paper is organized as follows. In Section 2, $\overline{e}_{a}(b)$ is calculated, then the relation between $e_{a}(b)$ and $\bar{e}_{a}(b)$ is established as $b\rightarrow0^+$, and the proof of Theorem \ref{th1} is finally given under general coercive potential $V(x)$ in \eqref{eq1.4}. In Section 3, Theorem \ref{th2} is proved based on some detailed energy estimates of $e_{a}(b)$. In Section 4, we prove the uniqueness of the minimizers for $e_{a}(b)$ by contradiction and some techniques on the local Pohozaev identities.
\section{Concentration behavior under general coercive potential.}

First of all, we recall the following Gagliardo-Nirenberg inequality \cite{W2}
\begin{equation}\label{eq2.1}
\int_{\mathbb R^{2}}|u|^{4}dx\leq \frac{2}{a^\ast}\int_{\mathbb R^{2}}|\nabla u|^{2}dx
\int_{\mathbb R^{2}}|u|^{2}dx,\ u\in H^{1}(\mathbb R^{2}),
\end{equation}
where the equality holds when $u=Q(x)$, the unique positive solution of \eqref{eq1.5}. Moreover, it follows from \eqref{eq1.5} and \eqref{eq2.1} that
\begin{equation}\label{eq2.2}
a^{\ast}=\int_{\mathbb R^{2}}|Q|^{2}dx=\int_{\mathbb R^{2}}|\nabla Q|^{2}dx=\frac{1}{2}\int_{\mathbb R^{2}}|Q|^{4}dx,
\end{equation}
and from Proposition 4.1 of \cite{GNN} that
\begin{equation}\label{eq2.3}
Q(x),\ |\nabla Q(x)|=O(|x|^{-\frac{1}{2}}e^{-|x|})\ \mbox{as}\ |x|\rightarrow\infty.
\end{equation}
\begin{lemma}\label{lemma2.1}
For any given $a>a^\ast$, by the definition of \eqref{eq1.6}, we have
\begin{equation}\label{eq2.4}
\overline{e}_{a}(b)=-\frac{1}{4b}\left(\frac{a-a^{\ast}}{a^{\ast}}\right)^{2},
\end{equation}
and the unique(up to translations) nonnegative minimizer of $\overline{e}_{a}(b)$ must be of the form
\begin{equation}\label{eq2.5}
\overline{u}_{b}(x)=\frac{r_{b}^{\frac{1}{2}}}{\sqrt{a^{\ast}}}Q(r_{b}^{\frac{1}{2}}x),\ \mbox{where}\ r_{b}=\frac{a-a^{\ast}}{ba^{\ast}}.
\end{equation}
\end{lemma}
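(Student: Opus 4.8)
The plan is to collapse the functional onto the single scalar $t := \int_{\mathbb{R}^2}|\nabla u|^2\,dx$, exploiting that under the mass constraint the only term pulling $\overline{E}_a^b$ downward is the quartic one, which the Gagliardo-Nirenberg inequality \eqref{eq2.1} bounds by $t$. First I would observe that for every admissible $u$ (with $\|u\|_2^2=1$) inequality \eqref{eq2.1} gives $\int_{\mathbb{R}^2}|u|^4\,dx\leq\frac{2}{a^\ast}t$, whence
\begin{equation*}
\overline{E}_a^b(u)\ \geq\ \frac{b}{4}t^2-\frac{1}{2}\cdot\frac{a-a^\ast}{a^\ast}\,t\ =:\ g(t).
\end{equation*}
Since $b>0$ and $a>a^\ast$, the upward parabola $g$ attains its minimum over $t\geq0$ at the interior point $t_\ast=\frac{1}{b}\cdot\frac{a-a^\ast}{a^\ast}=r_b>0$, with value $g(t_\ast)=-\frac{1}{4b}\big(\frac{a-a^\ast}{a^\ast}\big)^2$. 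This yields at once the lower bound $\overline{e}_a(b)\geq-\frac{1}{4b}\big(\frac{a-a^\ast}{a^\ast}\big)^2$. (Note this is exactly where $a>a^\ast$ is used: for $a=a^\ast$ the minimum of $g$ sits at $t=0$, unattainable by a normalized function, consistent with the known nonexistence.)

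For the matching upper bound I would test with the explicit scaling family $u_\tau(x)=\frac{\tau}{\sqrt{a^\ast}}Q(\tau x)$. Using the identities \eqref{eq2.2} one checks the normalizations $\|u_\tau\|_2^2=1$, $\int_{\mathbb{R}^2}|\nabla u_\tau|^2\,dx=\tau^2$, and $\int_{\mathbb{R}^2}|u_\tau|^4\,dx=\frac{2\tau^2}{a^\ast}$; in particular $u_\tau$ saturates \eqref{eq2.1}, so $\overline{E}_a^b(u_\tau)=g(\tau^2)$ with no loss. Choosing $\tau^2=r_b$ (equivalently $\tau=r_b^{1/2}$) gives $u_\tau=\overline{u}_b$ and $\overline{E}_a^b(\overline{u}_b)=g(r_b)=-\frac{1}{4b}\big(\frac{a-a^\ast}{a^\ast}\big)^2$, which combined with the lower bound establishes \eqref{eq2.4}.

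The remaining and most delicate part is uniqueness. If $u$ is any nonnegative minimizer, then both inequalities used above must hold as equalities: $u$ must saturate the Gagliardo-Nirenberg inequality \eqref{eq2.1}, and its kinetic energy must equal the unique minimizer $t=r_b$ of $g$. The main obstacle is invoking the \emph{rigidity} of \eqref{eq2.1}, namely that its optimizers are precisely the rescaled translates $u(x)=\lambda Q(\mu(x-y))$ with $\lambda>0$, $\mu>0$, $y\in\mathbb{R}^2$; this is Weinstein's characterization and is tied to the uniqueness of $Q$ as the positive solution of \eqref{eq1.5}. Granting this form, the two remaining constraints $\|u\|_2^2=1$ and $\int_{\mathbb{R}^2}|\nabla u|^2\,dx=r_b$ determine, via \eqref{eq2.2}, the parameters uniquely as $\mu^2=r_b$ and $\lambda=r_b^{1/2}/\sqrt{a^\ast}$, forcing $u(x)=\overline{u}_b(x-y)$. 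Hence the minimizer is unique up to translation, giving \eqref{eq2.5}.
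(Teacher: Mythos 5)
Your derivation of \eqref{eq2.4} is essentially identical to the paper's: both arguments collapse $\overline{E}_a^b$ onto the scalar $t=\int_{\mathbb R^2}|\nabla u|^2dx$ via the Gagliardo--Nirenberg inequality \eqref{eq2.1}, minimize the resulting parabola $h(t)=\frac{b}{4}t^2-\frac{a-a^\ast}{2a^\ast}t$ at $t=r_b$, and match the lower bound with the test family $u_t=\frac{t}{\sqrt{a^\ast}}Q(tx)$ using the identities \eqref{eq2.2}. Where you genuinely diverge is the uniqueness step: the paper disposes of it by citing Theorem 1.1 of \cite{ZZ1}, whereas you extract it directly from your own inequality chain --- equality must hold both in \eqref{eq2.1} and in $g(t)\geq g(r_b)$, the strict convexity of $g$ pins down $t=r_b$, the classification of extremals of \eqref{eq2.1} (Weinstein's characterization, resting on the uniqueness of $Q$ from \cite{KMK,KJ}) forces $u(x)=\lambda Q(\mu(x-y))$, and the two constraints $\|u\|_2^2=1$ and $\|\nabla u\|_2^2=r_b$ then determine $\mu^2=r_b$ and $\lambda=r_b^{1/2}/\sqrt{a^\ast}$ (note that in dimension two $\|\nabla u\|_2^2=\lambda^2 a^\ast$ is independent of $\mu$, which is exactly why the system decouples so cleanly). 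Your route buys self-containedness: it avoids importing a Kirchhoff-specific uniqueness theorem and makes visible that the rigidity comes entirely from the Gagliardo--Nirenberg equality case, which is presumably also the engine inside \cite{ZZ1}; the cost is that you must invoke the full classification of nonnegative GN optimizers, and to be complete you should note that a nonnegative extremal is first shown strictly positive (strong maximum principle applied to its Euler--Lagrange equation, which after rescaling is \eqref{eq1.5}) before the uniqueness of $Q$ can be applied. With that remark added, your argument is correct and complete.
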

\begin{proof}[\textup{\textbf{Proof}}]
For any $u\in H^1(\mathbb R^2)$ satisfying $\int_{\mathbb R^2}|u|^2dx=1$, it follows from \eqref{eq1.6*} and \eqref{eq2.1} that
\begin{equation*}
\overline{E}_{a}^{b}(u)\geq\frac{b}{4}\left(\int_{\mathbb R^{2}}|\nabla u|^{2}dx\right)^{2}-\frac{a-a^\ast}{2a^\ast}\int_{\mathbb R^{2}}|\nabla u|^{2}dx.
\end{equation*}
Set
\begin{equation}\label{eq2.6}
h(r)=\frac{b}{4}r^2-\frac{a-a^{\ast}}{2a^\ast}r,\ r\in[0,+\infty).
\end{equation}
By simple calculation, we know that $h(r)$ attains its global minimum at $r_{b}=\frac{a-a^{\ast}}{ba^{\ast}}$, hence
\begin{equation*}
\overline{E}_{a}^{b}(u)\geq h(r_{b})=-\frac{1}{4b}\left(\frac{a-a^{\ast}}{a^{\ast}}\right)^{2}.
\end{equation*}
This implies that
\begin{equation}\label{eq2.7}
\overline{e}_{a}(b)\geq h(r_{b})=-\frac{1}{4b}\left(\frac{a-a^{\ast}}{a^{\ast}}\right)^{2}.
\end{equation}

On the other hand, take $u_{t}(x)=\frac{t}{\sqrt{a^\ast}}Q(tx)(t>0)$, then $\int_{\mathbb R^2}|u_{t}|^2dx=1$, it follows from \eqref{eq2.2} that
\begin{equation} \label{eq2.8}
\int_{\mathbb R^{2}}|\nabla u_{t}|^{2}dx
=\frac{t^{2}\int_{\mathbb R^{2}}|\nabla Q|^{2}dx}{a^\ast}=t^{2},
\end{equation}
and
\begin{equation} \label{eq2.9}
\int_{\mathbb R^{2}}|u_{t}|^{4}dx
=\frac{t^{2}\int_{\mathbb R^{2}}|Q|^{4}dx}{(a^\ast)^{2}}
=\frac{2t^2}{a^{\ast}}.
\end{equation}
Hence,
\begin{equation*}
\overline{e}_{a}(b)\leq \overline{E}_{a}^{b}(u_{t})=\frac{b}{4}t^4-\frac{a-a^{\ast}}{2a^\ast}t^2=h(t^{2}),
\end{equation*}
where $h(\cdot)$ is given by \eqref{eq2.6}. Therefore, let $t=r_{b}^{\frac{1}{2}}$, we see that
\begin{equation}\label{eq2.9*}
\overline{e}_{a}(b)\leq h(r_{b})=-\frac{1}{4b}\left(\frac{a-a^{\ast}}{a^{\ast}}\right)^{2},
\end{equation}
this and \eqref{eq2.7} imply that \eqref{eq2.4} holds. Moreover, $\overline{e}_{a}(b)$ is attained by $\overline{u}_{b}(x)=\frac{r_{b}^{\frac{1}{2}}}{\sqrt{a^\ast}}Q(r_{b}^{\frac{1}{2}}x)$, and the proof is completed by the uniqueness (e.g., Theorem 1.1 in \cite{ZZ1}) of positive minimizer for $\overline{e}_{a}(b)$.
\end{proof}
\begin{lemma}\label{lemma2.2}
For any given $a>a^{\ast}$, let $V(x)$ satisfy \eqref{eq1.4} and let $u_{b}$ be a nonnegative minimizer of $e_{a}(b)$. Then,
\begin{equation}\label{eq2.10}
0\leq e_{a}(b)-\overline{e}_{a}(b)\rightarrow0\ \mbox{as}\ b\rightarrow0^+,
\end{equation}
and
\begin{equation}\label{eq2.11}
\int_{\mathbb R^2}V(x)|u_{b}|^{2}dx\rightarrow0\ \mbox{as}\ b\rightarrow0^+.
\end{equation}
\end{lemma}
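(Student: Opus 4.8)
The plan is to compare the two energies by writing, for every $u$, $E_a^b(u)=\overline E_a^b(u)+\tfrac12\int_{\mathbb R^2}V(x)u^2\,dx$, and then to exploit $V\ge0$ together with the explicit minimizer of $\overline e_a(b)$ produced in Lemma \ref{lemma2.1}. The lower bound in \eqref{eq2.10} is immediate: since $V\ge0$, any admissible $u$ (that is, $u\in\mathcal H$ with $\|u\|_2=1$) satisfies $E_a^b(u)\ge\overline E_a^b(u)\ge\overline e_a(b)$, the last step because $\mathcal H\subset H^1(\mathbb R^2)$ makes $u$ admissible for \eqref{eq1.6}; taking the infimum gives $e_a(b)\ge\overline e_a(b)$. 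The core of the argument is the matching upper bound $e_a(b)\le\overline e_a(b)+o(1)$, for which I would build a concentrated test function from the explicit minimizer $\overline u_b(x)=\tfrac{r_b^{1/2}}{\sqrt{a^\ast}}Q(r_b^{1/2}x)$ of \eqref{eq2.5}, whose width $r_b^{-1/2}\to0$ since $r_b=\tfrac{a-a^\ast}{ba^\ast}\to\infty$. Fix a global minimum point $x_0$ of $V$ (so $V(x_0)=0$) and a cutoff $\chi\in C_c^\infty(\mathbb R^2)$ with $\chi\equiv1$ on $B_1(0)$ and $\operatorname{supp}\chi\subset B_2(0)$; set $\tilde v_b(x)=\overline u_b(x-x_0)\,\chi(x-x_0)$ and $v_b=\tilde v_b/\|\tilde v_b\|_2$. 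The cutoff is what guarantees $v_b\in\mathcal H$ under the mere local boundedness of $V$ in \eqref{eq1.4}, since $v_b$ is supported in $B_2(x_0)$, where $V\in L^\infty$; thus no growth hypothesis on $V$ is needed.

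Since $\overline E_a^b$ contains no potential term it is translation invariant, so $\overline E_a^b\big(\overline u_b(\cdot-x_0)\big)=\overline e_a(b)$. Because $Q$ and $\nabla Q$ decay exponentially by \eqref{eq2.3} while the concentration scale tends to $0$, inserting the fixed cutoff and renormalizing perturb $\int|\nabla\tilde v_b|^2$, $\int\tilde v_b^4$ and $\|\tilde v_b\|_2$ only by quantities exponentially small in $r_b^{1/2}=O(b^{-1/2})$. The one delicate point is that $\overline e_a(b)=-\tfrac1{4b}\big(\tfrac{a-a^\ast}{a^\ast}\big)^2$ is of order $b^{-1}$, so I must verify these errors survive that scale: writing $\int|\nabla v_b|^2=r_b(1+\delta_b)$ with $\delta_b=O(e^{-c/\sqrt b})$, the nonlocal term becomes $\tfrac b4 r_b^2(1+\delta_b)^2=\tfrac b4 r_b^2+O\big(b^{-1}e^{-c/\sqrt b}\big)$, whose error tends to $0$. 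Hence $\overline E_a^b(v_b)=\overline e_a(b)+o(1)$.

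It then remains to show the potential energy of the test function is negligible, $\tfrac12\int_{\mathbb R^2}V v_b^2\,dx\to0$, and this is the main obstacle. I would use the two-scale structure: for small $\rho>0$,
\[
\int_{\mathbb R^2}V v_b^2\,dx\le (1+o(1))\Big[\big(\sup_{B_\rho(x_0)}V\big)\!\int_{\mathbb R^2}\overline u_b^2\,dx+\big(\sup_{B_2(x_0)}V\big)\!\int_{|x-x_0|>\rho}\overline u_b^2\,dx\Big].
\]
The first bracket is small because $V(x_0)=0$ and $V$ is small near $x_0$ (choosing $\rho$ small), while the second tends to $0$ for fixed $\rho$ as $b\to0^+$, since after the scaling $z=r_b^{1/2}(x-x_0)$ it is a constant times $\int_{|z|>r_b^{1/2}\rho}Q^2\,dz\to0$ because $r_b^{1/2}\rho\to\infty$. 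Consequently $e_a(b)\le E_a^b(v_b)=\overline e_a(b)+o(1)$, which together with the lower bound proves \eqref{eq2.10}.

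Finally, \eqref{eq2.11} follows for free. For the minimizer $u_b$ of $e_a(b)$ one has $e_a(b)=\overline E_a^b(u_b)+\tfrac12\int_{\mathbb R^2} V u_b^2\,dx\ge\overline e_a(b)+\tfrac12\int_{\mathbb R^2} V u_b^2\,dx$ (again using that $u_b\in H^1$ with $\|u_b\|_2=1$ is admissible for $\overline e_a(b)$), so that $0\le\tfrac12\int_{\mathbb R^2}V u_b^2\,dx\le e_a(b)-\overline e_a(b)\to0$ by \eqref{eq2.10}, which is exactly \eqref{eq2.11}.
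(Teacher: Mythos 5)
Your proof is correct and follows essentially the same route as the paper's: the trivial lower bound $e_a(b)\ge\overline e_a(b)$ from $V\ge0$, an upper bound via the same cutoff-and-normalize test function $A_b\,\xi(x-x_0)\overline u_b(x-x_0)$ concentrating at a zero of $V$ with exponentially small errors in $r_b^{1/2}$ (you verify explicitly, as the paper does implicitly, that these survive the $b^{-1}$ scale since $br_b$ is constant), and the identical one-line deduction of \eqref{eq2.11} from \eqref{eq2.10}. The only cosmetic difference is that you establish $\int_{\mathbb R^2}Vv_b^2\,dx\to0$ by a two-scale splitting into $B_\rho(x_0)$ and its complement, where the paper invokes the convergence $\int V\widehat u_b^2\,dx=V(x_0)+o(1)$ directly; both rest on the same continuity of $V$ near $x_0$.
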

\begin{proof}[\textup{\textbf{Proof}}]
By the definition of $e_{a}(b)$ and $\overline{e}_{a}(b)$, it is easy to see that
\begin{equation*}
e_{a}(b)-\overline{e}_{a}(b)\geq0.
\end{equation*}
Now, we turn to giving an upper bound for $e_{a}(b)-\overline{e}_{a}(b)$. Let $0\leq\xi(x)\in C_{0}^{\infty}(\mathbb R^{2})$ be a cut-off function such that
\begin{equation}\label{eq2.12}
\xi(x)\equiv1\ \mbox{if}\ |x|\leq1,\ \xi(x)\equiv0\ \mbox{if}\ |x|\geq2,\ \mbox{and}\ 0\leq\xi(x)\leq1,\ \mbox{if}\ 1\leq|x|\leq2.
\end{equation}
For any $x_{0}\in\mathbb R^2$, set
\begin{equation}\label{eq2.13}
\widehat{u}_{b}(x)=A_{b}\xi(x-x_{0})\overline{u}_{b}(x-x_{0}),
\end{equation}
where $\overline{u}_{b}(x)$ is defined in \eqref{eq2.5}, and $A_{b}>0$ is chosen so that $\int_{\mathbb R^2}|\widehat{u}_{b}|^2dx=1$. By the exponential decay property of $Q(x)$ in \eqref{eq2.3} and the definition of \eqref{eq2.5}, we have
\begin{equation}\label{eq2.14}
0\leq A_{b}^{2}-1=\frac{\int_{\mathbb R^2}[1-\xi^{2}(r_{b}^{-\frac{1}{2}}x)]Q^{2}(x)dx}{\int_{\mathbb R^2}\xi^{2}(r_{b}^{-\frac{1}{2}}x)Q^{2}(x)dx}
\leq Ce^{-2r_{b}^{\frac{1}{2}}}\ \mbox{as}\ b\rightarrow0^+.
\end{equation}
Then,
\begin{equation}\label{eq2.16}
\aligned
\int_{\mathbb R^2}|\widehat{u}_{b}|^{4}dx&=\frac{A_{b}^{4}r_{b}}{(a^\ast)^{2}}\int_{\mathbb R^2}\xi^{4}(r_{b}^{-\frac{1}{2}}x)Q^{4}(x)dx
\geq\frac{r_{b}}{(a^\ast)^{2}}\int_{\mathbb R^2}Q^{4}(x)dx-Ce^{-2r_{b}^{\frac{1}{2}}}\\
&=\int_{\mathbb R^2}|\overline{u}_{b}|^{4}dx-Ce^{-2r_{b}^{\frac{1}{2}}},\ \mbox{as}\ b\rightarrow0^+,
\endaligned
\end{equation}
and
\begin{equation}\label{eq2.15}
\int_{\mathbb R^2}V(x)\widehat{u}_{b}^{2}(x)dx=\frac{A_{b}^{2}}{a^\ast}\int_{\mathbb R^2}V(r_{b}^{-\frac{1}{2}}x+x_0)\xi^{2}(r_{b}^{-\frac{1}{2}}x)Q^{2}(x)dx=V(x_0)+o(1),
\end{equation}
where $o(1)\rightarrow0$ as $b\rightarrow0^+$. Similarly, we have
\begin{equation}\label{eq2.17}
\int_{\mathbb R^2}|\nabla\widehat{u}_{b}|^{2}dx
\leq\frac{r_{b}}{a^\ast}\int_{\mathbb R^2}|\nabla Q(x)|^{2}dx+Ce^{-r_{b}^{\frac{1}{2}}}
=\int_{\mathbb R^{2}}|\nabla\overline{u}_{b}|^{2}dx+Ce^{-r_{b}^{\frac{1}{2}}}\ \mbox{as}\ b\rightarrow0^+.
\end{equation}
Taking $x_{0}\in\mathbb R^{2}$ such that $V(x_{0})=0$, then the above estimates show that
\begin{equation*}
\aligned
0\leq e_{a}(b)-\overline{e}_{a}(b)
&\leq E_{a}^{b}(\widehat{u}_{b})-\overline{E}_{a}^{b}(\overline{u}_{b})=\overline{E}_{a}^{b}(\widehat{u}_{b})-\overline{E}_{a}^{b}(\overline{u}_{b})+\frac{1}{2}\int_{\mathbb R^{2}}V(x)|\widehat{u}_{b}|^{2}dx\\
&\leq\frac{1}{2}V(x_{0})+Ce^{-\frac{1}{2}r_{b}^{\frac{1}{2}}}+o(1)\rightarrow0,\ \mbox{as}\ b\rightarrow0^+,
\endaligned
\end{equation*}
and hence \eqref{eq2.10} holds. Moreover, since $u_{b}$ is a minimizer for $e_{a}(b)$, we know that
\begin{equation*}
\int_{\mathbb R^2}V(x)|u_{b}|^{2}dx=E_{a}^{b}(u_{b})-\overline{E}_{a}^{b}(u_{b})\leq e_{a}(b)-\overline{e}_{a}(b)\rightarrow0\ \mbox{as}\ b\rightarrow0^+.
\end{equation*}
This implies \eqref{eq2.11} holds and the proof of the lemma is completed.
\end{proof}
\begin{lemma}\label{lemma2.3}
For any given $a>a^{\ast}$, let $V(x)$ satisfy \eqref{eq1.4} and let $u_{b}$ be a nonnegative minimizer of $e_{a}(b)$. Then,
\begin{equation}\label{eq2.18}
\frac{\int_{\mathbb R^2}|\nabla u_{b}|^{2}dx}{r_{b}}\rightarrow1\quad \mbox{and}\quad \frac{\int_{\mathbb R^2}|u_{b}|^{4}dx}{r_{b}}\rightarrow\frac{2}{a^{\ast}}\ \mbox{as}\ b\rightarrow0^+,
\end{equation}
where $r_{b}$ is defined in \eqref{eq2.5}.
\end{lemma}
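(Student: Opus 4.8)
The plan is to reduce everything to the one-variable function $h$ from Lemma~\ref{lemma2.1} together with the two facts supplied by Lemma~\ref{lemma2.2}, namely $e_{a}(b)-\overline{e}_{a}(b)\to 0$ and $\int_{\mathbb R^2}V(x)|u_{b}|^{2}dx\to 0$ as $b\to0^{+}$. Throughout I would write $t_{b}:=\int_{\mathbb R^2}|\nabla u_{b}|^{2}dx$ and $s_{b}:=\int_{\mathbb R^2}|u_{b}|^{4}dx$, so that the two claimed limits read $t_{b}/r_{b}\to1$ and $s_{b}/r_{b}\to2/a^{\ast}$.

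First I would pin down the convergence of $t_{b}$. Feeding the Gagliardo--Nirenberg inequality \eqref{eq2.1} (with $\|u_{b}\|_{2}^{2}=1$) into the auxiliary functional gives $\overline{E}_{a}^{b}(u_{b})\geq\frac{b}{4}t_{b}^{2}-\frac{a-a^{\ast}}{2a^{\ast}}t_{b}=h(t_{b})$, where $h$ is as in \eqref{eq2.6}. On the other hand, since $V\geq0$ and $u_{b}$ is a minimizer, $\overline{E}_{a}^{b}(u_{b})=e_{a}(b)-\frac12\int_{\mathbb R^2}V|u_{b}|^{2}dx\leq e_{a}(b)$. Combining these with $e_{a}(b)=\overline{e}_{a}(b)+o(1)=h(r_{b})+o(1)$ (Lemma~\ref{lemma2.2} and \eqref{eq2.4}) yields $h(t_{b})\leq h(r_{b})+o(1)$. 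Since $h$ is a parabola with leading coefficient $b/4$ minimized at $r_{b}=\frac{a-a^{\ast}}{ba^{\ast}}$, one has the exact identity $h(r)-h(r_{b})=\frac{b}{4}(r-r_{b})^{2}$, so the previous inequality becomes $\frac{b}{4}(t_{b}-r_{b})^{2}\leq o(1)$, that is $(t_{b}-r_{b})^{2}\leq\frac{4}{b}\,o(1)$.

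The decisive step is to convert this into relative smallness. Here I would use that $r_{b}=\frac{a-a^{\ast}}{ba^{\ast}}$ blows up exactly like $b^{-1}$, so $r_{b}^{2}$ is of order $b^{-2}$; dividing the last bound by $r_{b}^{2}$ gives
\[
\frac{(t_{b}-r_{b})^{2}}{r_{b}^{2}}\leq\frac{4\,o(1)/b}{(a-a^{\ast})^{2}/(b^{2}(a^{\ast})^{2})}=\frac{4(a^{\ast})^{2}}{(a-a^{\ast})^{2}}\,b\,o(1)\longrightarrow0,
\]
whence $t_{b}/r_{b}\to1$. This matching of the $b^{-1}$ rate of $r_{b}$ against the $b^{-1}$ loss in the energy estimate is the only genuinely delicate point, and it is precisely what forces the hypothesis $a>a^{\ast}$ (so that $r_{b}\to\infty$) to be used.

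Finally, for $s_{b}$ the upper estimate $s_{b}/r_{b}\leq\frac{2}{a^{\ast}}(t_{b}/r_{b})\to2/a^{\ast}$ is immediate from \eqref{eq2.1}. For the matching lower bound I would use the minimality identity $E_{a}^{b}(u_{b})=e_{a}(b)$ rearranged as $\frac{a}{4}s_{b}=\frac12 t_{b}+\frac12\int_{\mathbb R^2}V|u_{b}|^{2}dx+\frac{b}{4}t_{b}^{2}-e_{a}(b)$, divide by $r_{b}$, and pass to the limit term by term: the first term tends to $\tfrac12$, the $V$-term to $0$ (by \eqref{eq2.11} and $r_{b}\to\infty$), the term $\frac{b}{4}t_{b}^{2}/r_{b}=\frac{br_{b}}{4}(t_{b}/r_{b})^{2}\to\frac{a-a^{\ast}}{4a^{\ast}}$ (since $br_{b}=\frac{a-a^{\ast}}{a^{\ast}}$), and $e_{a}(b)/r_{b}\to-\frac{a-a^{\ast}}{4a^{\ast}}$ (from \eqref{eq2.4}). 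Adding these gives $\frac{a}{4}\frac{s_{b}}{r_{b}}\to\frac12+\frac{a-a^{\ast}}{2a^{\ast}}=\frac{a}{2a^{\ast}}$, i.e. $s_{b}/r_{b}\to2/a^{\ast}$, which completes the proof.
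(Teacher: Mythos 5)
Your proof is correct, and for the first limit it takes a genuinely different route from the paper. The paper argues by contradiction: it supposes $t_{b}/r_{b}\to\theta\neq1$ along a sequence and compares the resulting value of $\lim_{b\to0^+} e_{a}(b)/h(r_{b})$ (which would lie in $(0,1)$ when $\theta\in[0,1)$, and similarly fails for $\theta>1$) against the value $1$ forced by Lemma~\ref{lemma2.2} and \eqref{eq2.9*}. You instead exploit the exact identity $h(r)-h(r_{b})=\frac{b}{4}(r-r_{b})^{2}$ for the parabola \eqref{eq2.6}, which turns the chain $h(t_{b})\leq\overline{E}_{a}^{b}(u_{b})\leq e_{a}(b)=h(r_{b})+o(1)$ into the quantitative bound $(t_{b}-r_{b})^{2}\leq\frac{4}{b}\,o(1)$, and the rate-matching step $r_{b}\sim b^{-1}$ then yields $t_{b}/r_{b}\to1$ directly. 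This buys you two things the paper's argument does not provide: you avoid any subsequence extraction (the paper's contradiction implicitly assumes the ratio converges, which strictly speaking needs a compactness remark), and you get a convergence rate for free, namely $\left|\frac{t_{b}}{r_{b}}-1\right|\leq C\bigl(b\,(e_{a}(b)-\overline{e}_{a}(b))\bigr)^{1/2}$, which could be made explicit by tracking the error $Ce^{-\frac{1}{2}r_{b}^{1/2}}+o(1)$ in Lemma~\ref{lemma2.2}. For the second limit your argument coincides with the paper's: both rearrange the minimality identity $E_{a}^{b}(u_{b})=e_{a}(b)$, divide by $r_{b}$, and pass to the limit using $br_{b}=\frac{a-a^{\ast}}{a^{\ast}}$, \eqref{eq2.11}, and \eqref{eq2.4}; your limit computation $\frac{a}{4}\frac{s_{b}}{r_{b}}\to\frac{1}{2}+\frac{a-a^{\ast}}{2a^{\ast}}=\frac{a}{2a^{\ast}}$ checks out term by term (and, as you note, the Gagliardo--Nirenberg upper bound on $s_{b}/r_{b}$ is then redundant, since the identity already pins down the limit).
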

\begin{proof}[\textup{\textbf{Proof}}]
By contradiction, if there exits some $\theta\geq0$ and $\theta\neq1$ such that
\begin{equation*}
\frac{\int_{\mathbb R^2}|\nabla u_{b}|^{2}dx}{r_{b}}\rightarrow \theta\ \mbox{as}\ b\rightarrow0^+.
\end{equation*}
Then, there is always a contradiction for both $\theta \in [0,1)$ and $\theta >1$.

In fact, if $\theta\in[0,1)$, then there exists $\epsilon>0$ such that $\delta\triangleq\theta+\epsilon<1$ and
$\frac{\int_{\mathbb R^2}|\nabla u_{b}|^{2}dx}{r_{b}}\leq\delta$ as $b\rightarrow0^+$. It follows from \eqref{eq2.1}, \eqref{eq2.4} and \eqref{eq2.10} that
\begin{equation*}
0>e_{a}(b)=E_{a}^{b}(u_{b})\geq h\left(\int_{\mathbb R^2}|\nabla u_{b}|^{2}dx\right)\geq h(\delta r_{b})\geq h(r_{b})=\overline{e}_{a}(b)\ \mbox{as}\ b\ \mbox{close to 0},
\end{equation*}
where $h(\cdot)$ defined as \eqref{eq2.6} has a unique minimum point at $r_{b}$. Hence,
\begin{equation}\label{eq2.19*}
\lim_{b\rightarrow0^+}\frac{e_{a}(b)}{h(r_{b})}\leq
\lim_{b\rightarrow0^+}\frac{h(\delta r_{b})}{h(r_{b})}
=\lim_{b\rightarrow0^+}\frac{\frac{b}{4}\delta^{2}r_{b}^{2}-\frac{a-a^\ast}{2a^\ast}\delta r_{b}}{\frac{b}{4}r_{b}^{2}-\frac{a-a^\ast}{2a^\ast} r_{b}}
=-\delta^{2}+2\delta\in(0,1)\ \text{ for all }\ \delta\in[0,1).
\end{equation}
Moreover, \eqref{eq2.9*} and \eqref{eq2.10} indicate that
\begin{equation*}
\lim_{b\rightarrow0^+}\frac{e_{a}(b)}{h(r_{b})}=
\lim_{b\rightarrow0^+}\frac{\overline{e}_{a}(b)+o(1)}{\overline{e}_{a}(b)}
=1,
\end{equation*}
which contradict \eqref{eq2.19*}.

Similarly, if $\theta>1$, we have also a contradiction,
and hence $\frac{\int_{\mathbb R^2}|\nabla u_{b}|^{2}dx}{r_{b}}\rightarrow1$ as $b\rightarrow0^+$.

Since $u_{b}$ is a minimizer for $e_{a}(b)$, we have
\begin{equation*}
\frac{e_{a}(b)}{r_{b}}=\frac{\int_{\mathbb R^2}[|\nabla u_{b}|^{2}+V(x)u_{b}^{2}]dx}{2r_{b}}+\frac{b\left(\int_{\mathbb R^2}|\nabla u_{b}|^{2}dx\right)^{2}}{4r_{b}}-\frac{a\int_{\mathbb R^2}|u_{b}|^{4}dx}{4r_{b}}.
\end{equation*}
Applying \eqref{eq2.5}, $r_{b}\to\infty$ as $b\to0^{+}$ and $br_{b}=\frac{a-a^{\ast}}{a^{\ast}}$. It then follows from Lemmas \ref{lemma2.1} and \ref{lemma2.2} that
\begin{equation*}
\frac{a\int_{\mathbb R^2}|u_{b}|^{4}dx}{4r_{b}}\rightarrow\frac{a}{2a^{\ast}}\ \mbox{as}\ b\rightarrow0^+,
\end{equation*}
that is, $\frac{\int_{\mathbb R^2}|u_{b}|^{4}dx}{r_{b}}\rightarrow\frac{2}{a^{\ast}}$ as $b\rightarrow0^+$ and the lemma is proved.
\end{proof}
Our next lemma is to give the energy behaviors as $b\rightarrow0^+$ in the case of $a=a^\ast$.
\begin{lemma}\label{lemma2.4}
If $a=a^{\ast}$, let $V(x)$ satisfy \eqref{eq1.4} and let $u_{b}$ be a nonnegative minimizer of $e_{a^\ast}(b)$, then
\begin{equation}\label{eq2.19}
e_{a^\ast}(b)\rightarrow e_{a^\ast}(0)=0\ \mbox{and}\ \int_{\mathbb R^{2}}|\nabla u_{b}|^{2}dx\rightarrow+\infty,\ \mbox{as}\ b\rightarrow0^+.
\end{equation}
\begin{equation}\label{eq2.20}
b\left(\int_{\mathbb R^{2}}|\nabla u_{b}|^{2}dx\right)^{2}\rightarrow0\ \mbox{and}\ \int_{\mathbb R^{2}}V(x)|u_{b}|^{2}dx\rightarrow0,\ \mbox{as}\ b\rightarrow0^+.
\end{equation}
\end{lemma}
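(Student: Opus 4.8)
The plan is to split \eqref{eq2.19}--\eqref{eq2.20} into an energy-level computation and a single compactness argument, using the Gagliardo--Nirenberg inequality \eqref{eq2.1} as the workhorse throughout. The starting point is the identity $E_{a^\ast}^b(u)=E_{a^\ast}^0(u)+\frac b4\big(\int_{\mathbb R^2}|\nabla u|^2dx\big)^2$ together with the pointwise bound
$$E_{a^\ast}^0(u)=\Big(\tfrac12\int_{\mathbb R^2}|\nabla u|^2dx-\tfrac{a^\ast}{4}\int_{\mathbb R^2}|u|^4dx\Big)+\tfrac12\int_{\mathbb R^2}Vu^2dx\ \ge\ \tfrac12\int_{\mathbb R^2}Vu^2dx\ \ge\ 0,$$
valid on the constraint $\|u\|_2=1$ by \eqref{eq2.1}. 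Hence $E_{a^\ast}^b\ge E_{a^\ast}^0\ge0$ on the constraint set, so $e_{a^\ast}(b)\ge e_{a^\ast}(0)\ge0$ for every $b\ge0$.

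First I would prove the upper bound $e_{a^\ast}(b)\to0$. Following the trial-function construction of Lemma \ref{lemma2.2}, fix $x_0$ with $V(x_0)=0$ and, for $t>0$, use the concentrated function $u_t(x)=A_t\frac{t}{\sqrt{a^\ast}}\,\xi(x-x_0)Q(t(x-x_0))$ normalized so that $\|u_t\|_2=1$. The scaling relations \eqref{eq2.8}--\eqref{eq2.9} and the fact that $Q$ saturates \eqref{eq2.1} make the gradient and quartic terms cancel up to an exponentially small cutoff error, leaving $E_{a^\ast}^b(u_t)\le\frac12\int_{\mathbb R^2}Vu_t^2dx+\frac b4t^4+Ce^{-ct}$; arguing exactly as for \eqref{eq2.15} gives $\int_{\mathbb R^2}Vu_t^2dx\to0$ as $t\to\infty$. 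For any $\varepsilon>0$ one then fixes $t$ large so that the $V$-term is below $\varepsilon/2$ and lets $b\to0^+$ so that $\frac b4t^4<\varepsilon/2$, yielding $\limsup_{b\to0^+}e_{a^\ast}(b)\le\varepsilon$; the same construction at $b=0$ gives $e_{a^\ast}(0)=0$. Combined with the lower bound, this proves $e_{a^\ast}(b)\to e_{a^\ast}(0)=0$, the first half of \eqref{eq2.19}. Both relations in \eqref{eq2.20} then follow at once from the decomposition: $\frac b4\big(\int_{\mathbb R^2}|\nabla u_b|^2dx\big)^2=e_{a^\ast}(b)-E_{a^\ast}^0(u_b)\le e_{a^\ast}(b)\to0$ since $E_{a^\ast}^0(u_b)\ge0$, and $\frac12\int_{\mathbb R^2}Vu_b^2dx\le E_{a^\ast}^0(u_b)\le e_{a^\ast}(b)\to0$.

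The only genuinely substantive step, which I expect to be the main obstacle, is the divergence $\int_{\mathbb R^2}|\nabla u_b|^2dx\to\infty$ in \eqref{eq2.19}. I would argue by contradiction: if along some $b_k\to0^+$ the gradients stay bounded, then using $\frac{a^\ast}{4}\int_{\mathbb R^2}|u_{b_k}|^4dx\le\frac12\int_{\mathbb R^2}|\nabla u_{b_k}|^2dx$ (again \eqref{eq2.1}) one bounds the potential energy and concludes that $\{u_{b_k}\}$ is bounded in $\mathcal H$. By the compact embedding $\mathcal H\hookrightarrow L^s(\mathbb R^2)$ ($2\le s<\infty$) recalled in Remark \ref{remark}, a subsequence converges to some $u_0$ strongly in $L^2$ and $L^4$ and weakly in $H^1$, with $\|u_0\|_2^2=1$ and $u_0\in\mathcal H$. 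Weak lower semicontinuity of the gradient and of $\int_{\mathbb R^2}Vu^2dx$, together with the $L^4$-convergence, then gives $E_{a^\ast}^0(u_0)\le\liminf_k E_{a^\ast}^0(u_{b_k})\le\lim_k e_{a^\ast}(b_k)=0=e_{a^\ast}(0)$, so $u_0$ would be a minimizer of $e_{a^\ast}(0)$. This contradicts the known nonexistence of minimizers for $e_a(0)$ when $a\ge a^\ast$ recalled in the introduction, forcing $\int_{\mathbb R^2}|\nabla u_b|^2dx\to\infty$ and completing the proof. The delicate points to monitor are the uniform $\mathcal H$-bound that makes the compact embedding applicable and the correct orientation of every semicontinuity inequality, since the nonlocal term and the potential term must each be discarded in the favorable direction.
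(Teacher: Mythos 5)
Your proposal is correct and follows essentially the same route as the paper's proof: the same cut-off trial function $u_\tau$ concentrating at a zero of $V$ for the upper bound, the nonnegativity of $E_{a^\ast}^{0}$ via \eqref{eq2.1} for the lower bound and for \eqref{eq2.20}, and the same compactness-plus-nonexistence contradiction (via Remark \ref{remark} and the result of \cite{GS}) for the divergence of the gradients. The only cosmetic differences are that you derive $e_{a^\ast}(0)=0$ from the trial function rather than citing \cite{GS} directly, and you decouple the limits $b\to0^+$ and $\tau\to\infty$ more explicitly via an $\varepsilon/2$ argument, which if anything is slightly cleaner than the paper's phrasing.
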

\begin{proof}[\textup{\textbf{Proof}}]
By Theorem 1 of \cite{GS}, $e_{a^\ast}(0)=0$. It then follows from \eqref{eq1.2} and \eqref{eq2.1} that
\begin{equation}\label{eq2.21}
e_{a^\ast}(b)=E_{a^\ast}^{b}(u_{b})
\geq\frac{b}{4}\left(\int_{\mathbb R^{2}}|\nabla u_{b}|^{2}dx\right)^{2}
+\frac{1}{2}\int_{\mathbb R^{2}}V(x)|u_{b}|^{2}dx>e_{a^\ast}(0)=0.
\end{equation}
Let $\xi(x)$ be the same cut-off function as \eqref{eq2.12}. For any $x_{0}\in\mathbb{R}^{2}$ and $\tau>0$, set
\begin{equation}\label{eq2.22}
u_{\tau}(x)=\frac{A_{\tau}\tau}{\sqrt{a^\ast}}\xi(x-x_{0})Q(\tau(x-x_{0})),
\end{equation}
where $A_{\tau}>0$ is chosen so that $\int_{\mathbb R^2}|u_{\tau}|^2dx=1$. Then, for $\tau$ large enough, similar to \eqref{eq2.14}--\eqref{eq2.17}, we have
\begin{equation}\label{eq2.23}
0\leq A_{\tau}^{2}-1\leq Ce^{-2\tau}\ \mbox{as}\ \tau\to\infty,
\end{equation}
\begin{equation}\label{eq2.24}
\int_{\mathbb R^{2}}|\nabla u_{\tau}|^{2}dx\leq\tau^{2}+Ce^{-\tau}\ \mbox{as}\ \tau\to\infty,
\end{equation}
\begin{equation}\label{eq2.25}
\int_{\mathbb R^{2}}|u_{\tau}|^{4}dx\geq\frac{2\tau^{2}}{a^{\ast}}-Ce^{-2\tau}\ \mbox{as}\ \tau\to\infty,
\end{equation}
and
\begin{equation}\label{eq2.26}
\int_{\mathbb R^{2}}V(x)|u_{\tau}|^{2}dx=V(x_{0})+o(1).
\end{equation}
where $o(1)\rightarrow0$ as $\tau\rightarrow+\infty$. Then, the above estimates show that
\begin{equation}\label{eq2.27}
0<e_{a^\ast}(b)\leq E_{a^\ast}^{b}(u_{\tau})
\leq\frac{b}{4}\tau^{4}+\frac{1}{2}V(x_{0})+Ce^{-\frac{1}{2}\tau}+o(1).
\end{equation}
Taking $x_{0}\in\mathbb R^{2}$ such that $V(x_{0})=0$, using \eqref{eq2.21}--\eqref{eq2.27} and letting $b\rightarrow0^{+}$ and $\tau\rightarrow+\infty$, we have
\begin{equation}\label{eq2.28}
0<e_{a^\ast}(b)\rightarrow0\ \mbox{as}\ b\rightarrow0^{+},
\end{equation}
and \eqref{eq2.21} implies that
\begin{equation}\label{eq2.29}
b\left(\int_{\mathbb R^{2}}|\nabla u_{b}|^{2}dx\right)^{2}\rightarrow0,\quad\int_{\mathbb R^{2}}V(x)|u_{b}|^{2}dx\rightarrow0\ \mbox{as}\ b\rightarrow0^{+}.
\end{equation}

Next, we claim that
\begin{equation}\label{eq2.30}
\int_{\mathbb R^{2}}|\nabla u_{b}|^{2}dx\rightarrow+\infty\ \mbox{as}\ b\rightarrow0^{+}.
\end{equation}
Otherwise, if \eqref{eq2.30} is false, then there exists a sequence of $\{b_{k}\}$ with $b_{k}\xrightarrow{k\to\infty}0^{+}$ such that the sequence $\{u_{k}\}$ is bounded in $\mathcal{H}$ since \eqref{eq2.29}, where $u_{k}\triangleq u_{b_k}$.
By the compact embedding results mentioned in Remark \ref{remark} , passing to a subsequence, there exists $u_{0}\in \mathcal{H}$ such that
\begin{equation}\label{eq2.31}
u_{k}\rightharpoonup u_{0}\ \mbox{in}\ \mathcal{H}\quad\mbox{and}\quad u_{k}\rightarrow u_{0}\ \mbox{in}\ L^{s}(\mathbb R^{2}),\  \mbox{as}\ k\rightarrow\infty,\ \mbox{for}\ s\in[2,+\infty).
\end{equation}
Then,
\begin{equation*}
e_{a^\ast}(0)\leq E_{a^\ast}^{0}(u_{0})\leq\liminf_{k\rightarrow\infty}E_{a^\ast}^{b_{k}}(u_{k})=\lim_{k\rightarrow\infty}e_{a^\ast}(b_{k})=0=e_{a^\ast}(0).
\end{equation*}
This means that $u_{0}$ is a minimizer of $e_{a^\ast}(0)$, which contradicts Theorem 1 of \cite{GS}, and hence \eqref{eq2.30} holds.
\end{proof}
\begin{lemma}\label{le2.3}
Suppose that $V(x)$ satisfies \eqref{eq1.4}. For any given $a\geq a^{\ast}$, let $u_{k}$ be a nonnegative minimizer of $e_{a}(b_{k})$ as in Theorem \ref{th1} and $z_{k}$ be a global maximum point of $u_{k}$, where $b_{k}\xrightarrow{k\to\infty}0^{+}$. Set
\begin{equation}\label{eq2.32}
w_{k}(x)=\epsilon_{k}u_{k}(\epsilon_{k}x+z_{k}),\ \mbox{where}\ \epsilon_{k}\ \mbox{is defined by \eqref{eq1.9}}.
\end{equation}
Then,
\begin{equation}\label{eq2.33}
\liminf_{k\rightarrow\infty}\int_{B_{2}(0)}|w_{k}|^{2}dx\geq M>0\ \ \mbox{for some}\ M>0.
\end{equation}
Moreover, passing to a subsequence, there exists a $z_{0}\in\mathbb R^{2}$ such that
\begin{equation}\label{eq2.34}
z_{k}\rightarrow z_{0}\  \mbox{as}\ k\rightarrow\infty,\ \mbox{and}\ V(z_{0})=0.
\end{equation}
\end{lemma}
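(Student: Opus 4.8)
The plan is to read off the basic rescaled quantities, then use the global maximum at the origin together with an elliptic subsolution estimate to obtain the non-vanishing \eqref{eq2.33}, and finally deduce \eqref{eq2.34} from the decay of $\int_{\mathbb R^2}V u_k^2\,dx$ and the coercivity \eqref{eq1.4}. First I would record the basic properties of $w_k$. A change of variables in \eqref{eq2.32} gives $\int_{\mathbb R^2}|w_k|^2\,dx=1$, $\int_{\mathbb R^2}|\nabla w_k|^2\,dx=\epsilon_k^2\int_{\mathbb R^2}|\nabla u_k|^2\,dx$ and $\int_{\mathbb R^2}|w_k|^4\,dx=\epsilon_k^2\int_{\mathbb R^2}|u_k|^4\,dx$. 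For $a>a^\ast$ we have $\epsilon_k^2=1/r_{b_k}$, so Lemma \ref{lemma2.3} gives $\int_{\mathbb R^2}|\nabla w_k|^2\,dx\to1$ and $\int_{\mathbb R^2}|w_k|^4\,dx\to 2/a^\ast$; for $a=a^\ast$ the choice of $\epsilon_k$ forces $\int_{\mathbb R^2}|\nabla w_k|^2\,dx=1$, while rearranging the energy identity $E_{a^\ast}^{b_k}(u_k)=e_{a^\ast}(b_k)$ and using \eqref{eq2.19}--\eqref{eq2.20} gives $\tfrac12\int_{\mathbb R^2}|\nabla u_k|^2\,dx-\tfrac{a^\ast}4\int_{\mathbb R^2}|u_k|^4\,dx=o(1)$, and dividing by $\int_{\mathbb R^2}|\nabla u_k|^2\,dx\to\infty$ yields $\int_{\mathbb R^2}|w_k|^4\,dx\to 2/a^\ast$ once more. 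Thus $\{w_k\}$ is bounded in $H^1(\mathbb R^2)$; moreover (by elliptic regularity $u_k$, hence $w_k$, is continuous and) the origin is a global maximum of $w_k$ because $z_k$ is one for $u_k$, so $\int_{\mathbb R^2}|w_k|^4\,dx\le\|w_k\|_\infty^2\|w_k\|_2^2$ forces $w_k(0)=\|w_k\|_\infty\ge c_0>0$ for all large $k$.

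Next I would pass to the Euler--Lagrange equation. As a constrained minimizer, $u_k$ solves \eqref{eq1.1} with a Lagrange multiplier $\mu_k$; writing $\beta_k:=1+b_k\int_{\mathbb R^2}|\nabla u_k|^2\,dx$ and substituting $u_k(y)=\epsilon_k^{-1}w_k\big((y-z_k)/\epsilon_k\big)$ gives
\[
-\beta_k\Delta w_k+\epsilon_k^2V(\epsilon_k x+z_k)\,w_k=a\,w_k^3+\epsilon_k^2\mu_k\,w_k.
\]
Testing \eqref{eq1.1} with $u_k$ and inserting the estimates of Lemmas \ref{lemma2.2}, \ref{lemma2.3} and \ref{lemma2.4} shows $\beta_k\to a/a^\ast$ and $\tilde\mu_k:=\epsilon_k^2\mu_k\to -a/a^\ast<0$. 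Since $V\ge0$ and $w_k\ge0$, for $k$ large the displayed equation yields the subsolution inequality $-\Delta w_k\le (a/\beta_k)w_k^3\le C\,w_k^3$ in the distributional sense on every ball.

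To obtain \eqref{eq2.33} I would then apply the local boundedness estimate for nonnegative subsolutions of $-\Delta w_k\le (Cw_k^2)\,w_k$ (De Giorgi--Nash--Moser theory). The zeroth-order coefficient $Cw_k^2$ is bounded in $L^2(B_2(0))$ uniformly in $k$, because $\|w_k^2\|_{L^2}=\|w_k\|_{L^4}^2\le C\|w_k\|_{H^1}^2$ and $\{w_k\}$ is bounded in $H^1$; hence the estimate holds with a constant independent of $k$:
\[
c_0\le w_k(0)\le\sup_{B_1(0)}w_k\le C\Big(\int_{B_2(0)}|w_k|^2\,dx\Big)^{\frac12}.
\]
This gives $\int_{B_2(0)}|w_k|^2\,dx\ge (c_0/C)^2=:M>0$, which is \eqref{eq2.33}.

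Finally, for \eqref{eq2.34} I would reverse the scaling: \eqref{eq2.33} becomes $\int_{B_{2\epsilon_k}(z_k)}|u_k|^2\,dx\ge M$. If $|z_k|\to\infty$ along a subsequence, then since $\epsilon_k\to0$ the coercivity \eqref{eq1.4} gives $\inf_{B_{2\epsilon_k}(z_k)}V\to\infty$, so $\int_{\mathbb R^2}V u_k^2\,dx\ge M\inf_{B_{2\epsilon_k}(z_k)}V\to\infty$, contradicting \eqref{eq2.11} (resp.\ \eqref{eq2.20}). Hence $\{z_k\}$ is bounded and, along a subsequence, $z_k\to z_0$; if $V(z_0)>0$ then by continuity $V\ge\delta>0$ on $B_{2\epsilon_k}(z_k)$ for large $k$, giving $\int_{\mathbb R^2}V u_k^2\,dx\ge\delta M>0$, again a contradiction, so $V(z_0)=0$. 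The crux of the argument is the uniform subsolution estimate near the peak: it is what converts the height bound $w_k(0)\ge c_0$ into a genuine lower bound on the local mass, and in two dimensions this step cannot be bypassed by energy considerations alone, since a spike of fixed height and shrinking width has bounded Dirichlet energy, so the $H^1$ bound by itself does not prevent a large $L^\infty$ norm from carrying negligible mass.
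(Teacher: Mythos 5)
Your proof is correct and follows essentially the same route as the paper's: the same rescaled identities \eqref{eq2.38}--\eqref{eq2.39} (including the same derivation of $\int_{\mathbb R^2}|w_k|^4dx\to 2/a^\ast$ for $a=a^\ast$ from the energy identity and Lemma \ref{lemma2.4}), the limit $\mu_k\epsilon_k^2\to-a/a^\ast$ as in \eqref{eq2.40}, the subsolution inequality followed by the De Giorgi--Nash--Moser local bound \eqref{eq2.43}, and the same unscaling-plus-coercivity argument for \eqref{eq2.34}. The one genuine divergence is the height bound \eqref{eq2.44}: the paper argues by contradiction, deducing $\sup_{y}\int_{B(y,r)}|w_k|^2dx\to0$ from $\|w_k\|_{L^\infty}\to0$ and then invoking the vanishing lemma (Willem, Lemma 1.21) to contradict $\int_{\mathbb R^2}|w_k|^4dx\to 2/a^\ast$, whereas you get $w_k(0)=\|w_k\|_{L^\infty}\geq c_0>0$ directly from the elementary interpolation $\int_{\mathbb R^2}w_k^4dx\leq\|w_k\|_{L^\infty}^2\|w_k\|_{L^2}^2$. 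Your route is a little more economical, since it dispenses with the concentration-compactness machinery altogether, and it rests on the same prerequisites (the $L^4$ limit and the continuity of $w_k$ from elliptic regularity, so that the supremum is attained at the origin); the paper's vanishing-lemma version, for its part, is the formulation that generalizes when one only controls the sup of local masses rather than the global maximum. One caveat your argument shares with the paper's: the concluding step $V(z_0)=0$ uses continuity of $V$ near $z_0$, which is not contained in \eqref{eq1.4} alone but is available here because the lemma inherits the $C^{\alpha}_{loc}$ hypothesis of Theorem \ref{th1}.
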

\begin{proof}[\textup{\textbf{Proof}}]
Since $u_{k}$ is a nonnegative minimizer for $e_{a}(b_{k})$. Then, $u_{k}(x)$ satisfies the following Euler-Lagrange equation
\begin{equation}\label{eq2.35}
-\left(1+b_{k}\int_{\mathbb R^{2}}|\nabla u_{k}|^{2}dx\right)\Delta u_{k}+V(x)u_{k}=\mu_{k} u_{k}+au^{3}_{k},\  x\in \mathbb{R}^{2},
\end{equation}
where $\mu_{k}\in\mathbb R$ is a suitable Lagrange multiplier associated to $u_{k}$, and
\begin{equation}\label{eq2.36}
e_{a}(b_{k})=\frac{1}{2}\int_{\mathbb R^{2}}[|\nabla u_{k}|^{2}+V(x)|u_{k}|^{2}]dx+\frac{b_{k}}{4}\left(\int_{\mathbb R^{2}}|\nabla u_{k}|^{2}dx\right)^{2}-\frac{a}{4}\int_{\mathbb R^{2}}|u_{k}|^{4}dx.
\end{equation}
Moreover,
\begin{equation}\label{eq2.37}
\mu_{k}=\int_{\mathbb R^{2}}[|\nabla u_{k}|^{2}+V(x)|u_{k}|^{2}]dx+b_{k}\left(\int_{\mathbb R^{2}}|\nabla u_{k}|^{2}dx\right)^{2}-a\int_{\mathbb R^{2}}|u_{k}|^{4}dx.
\end{equation}
If $a=a^{\ast}$, we deduce from \eqref{eq1.9}, \eqref{eq2.36} and Lemma \ref{lemma2.4} that
\begin{equation}\label{eq2.38}
\begin{cases}
\int_{\mathbb R^{2}}|\nabla w_{k}|^{2}dx=\int_{\mathbb R^{2}}|w_{k}|^{2}dx=1,\\\\
\int_{\mathbb R^{2}}|w_{k}|^{4}dx=\epsilon_{k}^{2}\int_{\mathbb R^{2}}|u_{k}|^{4}dx\rightarrow\frac{2}{a^{\ast}}\ \mbox{as}\ k\rightarrow\infty.
\end{cases}
\end{equation}
If $a>a^{\ast}$, it follows from \eqref{eq1.9}, \eqref{eq2.32} and Lemma \ref{lemma2.3} that
\begin{equation}\label{eq2.39}
\begin{cases}
\int_{\mathbb R^{2}}|w_{k}|^{2}dx=1,  \ \epsilon_{k}^{2}= {r_{b_{k}}}^{-1}, \\\\
\int_{\mathbb R^{2}}|\nabla w_{k}|^{2}dx=\epsilon_{k}^{2}\int_{\mathbb R^{2}}|\nabla u_{k}|^{2}dx=\frac{\int_{\mathbb R^2}|\nabla u_{k}|^{2}dx}{r_{b_{k}}}\rightarrow1\ \mbox{as}\ k\rightarrow\infty,\\\\
\int_{\mathbb R^{2}}|w_{k}|^{4}dx=\epsilon_{k}^{2}\int_{\mathbb R^{2}}|u_{k}|^{4}dx=\frac{\int_{\mathbb R^2}|u_{k}|^{4}dx}{r_{b_{k}}}\rightarrow\frac{2}{a^{\ast}}\ \mbox{as}\ k\rightarrow\infty.
\end{cases}
\end{equation}
Hence, for any given $a\geq a^\ast$, combining \eqref{eq2.37} and the above estimates, we see that
\begin{equation}\label{eq2.40}
\mu_{k}\epsilon_{k}^{2}\rightarrow-\frac{a}{a^{\ast}}\ \mbox{as}\ k\rightarrow\infty.
\end{equation}
Since $u_{k}$ satisfies \eqref{eq2.35}, by the definition of $w_{k}$ in \eqref{eq2.32}, we know that
$w_{k}(x)$ satisfies
\begin{equation}\label{eq2.41}
-\left(1+b_{k}\int_{\mathbb R^{2}}|\nabla u_{k}|^{2}dx\right)\Delta w_{k}+\epsilon_{k}^{2} V(\epsilon_{k}x+z_{k})w_{k}(x)
=\mu_{k}\epsilon_{k}^{2}w_{k}(x)
+aw_{k}^{3}(x),\  x\in\mathbb R^{2}.
\end{equation}
Hence, as $k$ large enough, it follows from \eqref{eq2.40} that
\begin{equation}\label{eq2.42}
-\Delta w_{k}-c(x)w_{k}\leq0,\ \mbox{where}\ c(x)=aw_{k}^{2}(x).
\end{equation}
Applying De Giorgi-Nash-Moser theory(similar to the proof of \cite[Theorem 4.1]{HL2}), we deduce that
\begin{equation}\label{eq2.43}
\max_{B_{1}(\xi)}w_{k}(x)\leq C\left(\int_{B_{2}(\xi)}|w_{k}|^{2}dx\right)^{\frac{1}{2}},
\end{equation}
where $\xi$ is an arbitrary point in $\mathbb R^{2}$ and $C$ is a constant depending only on the bound of $\|w_{k}\|_{L^{4}(B_{2}(\xi))}$.
Since $z_{k}$ is a global maximum point of $u_{k}$, 0 is a global maximum point of $w_{k}$. We claim that there exists some $\eta>0$ such that
\begin{equation}\label{eq2.44}
w_{k}(0)\geq\eta\quad\mbox{for}\ k\ \mbox{large enough}.
\end{equation}
If \eqref{eq2.44} is false, then for any $r>0$, passing to a subsequence if necessary, we have
\begin{equation*}
\sup_{y\in\mathbb R^{2}}\int_{B(y,r)}|w_{k}|^{2}(x)dx\rightarrow0\ \mbox{as}\ k\rightarrow\infty.
\end{equation*}
Then, the vanishing Lemma 1.21 in \cite{WM} shows that $\int_{\mathbb R^{2}}|w_{k}|^{4}dx\rightarrow0$ as $k\rightarrow\infty$, which contradicts \eqref{eq2.38} and \eqref{eq2.39}. Hence, \eqref{eq2.44} holds, and \eqref{eq2.33} follows from \eqref{eq2.43} and \eqref{eq2.44}.

Next, using \eqref{eq2.11} and \eqref{eq2.20}, we know that
\begin{equation*}
0=\liminf_{k\rightarrow\infty}\int_{\mathbb R^{2}}V(\epsilon_{k}x+z_{k})|w_{k}|^{2}dx
\geq\liminf_{k\rightarrow\infty}\int_{B_{2}(0)}V(\epsilon_{k}x+z_{k})|w_{k}|^{2}dx.
\end{equation*}
Since $V(x)\to\infty$ as $|x|\rightarrow\infty$, \eqref{eq2.33} implies that $\{z_{k}\}$ is a bounded sequence in $\mathbb R^2$, and passing to a subsequence if necessary, there exists a $z_{0}\in\mathbb R^2$ such that \eqref{eq2.34} holds
\end{proof}
Now, we are ready to prove Theorem \ref{th1}.
\begin{proof}[\textup{\textbf{Proof of Theorem \ref{th1}}}]
Let $u_{k}$ be a nonnegative minimizer of $e_{a}(b_{k})$ for $a\geq a^{\ast}$ and $w_{k}(x)$ be defined in \eqref{eq2.32}. It follows from \eqref{eq2.38} and \eqref{eq2.39} that $\{w_{k}\}$ is a bounded sequence in $H^{1}(\mathbb R^{2})$, and passing to subsequence, there exists $w_{0}\in H^{1}(\mathbb R^{2})$ such that
\begin{equation}\label{eq2.48*}
w_{k}\rightharpoonup w_{0}\geq0\ \mbox{in}\ H^{1}(\mathbb R^{2})\ \mbox{as}\ k\rightarrow\infty.
\end{equation}
Moreover, since $w_{k}(x)$ satisfies \eqref{eq2.41},
applying \eqref{eq2.40} and passing to the weak limit, we know that $w_{0}(x)$ satisfies, in the weak sense,
\begin{equation}\label{eq2.48}
-\Delta w_{0}+w_{0}(x)=a^{\ast}w_{0}^{3}(x),\  x\in\mathbb R^{2}.
\end{equation}
Furthermore, \eqref{eq2.33} implies that $w_{0}(x)\not\equiv0$, and $w_{0}(x)>0$ since the strong maximum principle. Comparing \eqref{eq1.5} and \eqref{eq2.48}, the uniqueness of positive solution of \eqref{eq1.5} shows that
\begin{equation}\label{eq2.49}
w_{0}(x)=\frac{Q(|x-x_{0}|)}{\sqrt{a^\ast}}\  \mbox{for}\ \mbox{some}\ x_{0}\in\mathbb R^{2},
\end{equation}
where $Q(x)$ is the unique positive solution of \eqref{eq1.5}. Moreover, by \eqref{eq2.1} we have
\begin{equation*}
\int_{\mathbb R^2}|\nabla w_{0}|^{2}dx=\int_{\mathbb R^2}w_{0}^{2}dx=1.
\end{equation*}
It then follows from \eqref{eq2.39}, \eqref{eq2.40} and \eqref{eq2.48*} that
\begin{equation}\label{eq2.50}
w_{k}\rightarrow w_{0}=\frac{Q(|x-x_{0}|)}{\sqrt{a^\ast}}\  \mbox{in}\ H^{1}(\mathbb R^{2})\ \mbox{as}\ k\rightarrow\infty.
\end{equation}
Since $V(x)\in C_{loc}^{\alpha}(\mathbb R^{2},\mathbb R^{+})$ for some $\alpha\in(0,1)$. Similar to the Proof of Theorem 1.2 in \cite{GZZ3}, we know from \eqref{eq2.41} and \eqref{eq2.50} that
\begin{equation}\label{eq2.51}
w_{k}\rightarrow w_{0}\ \mbox{in}\ C_{loc}^{2,\alpha}(\mathbb R^{2})\ \mbox{as}\ k\rightarrow\infty\ \mbox{for some}\ \alpha\in(0,1).
\end{equation}
By \eqref{eq2.32}, $x=0$ is a critical (global maximum) point of $w_{k}(x)$ for all $k>0$, it is also a critical point of $w_{0}$ by \eqref{eq2.51}. Since $Q(x)$ is radially symmetric about the origin and strictly monotonous about $|x|$ (see e.g., \cite{GNN,KJ,W2}), then $w_{0}(x)$ has a unique global maximum point $x=0$ and $x_{0}=0$. Hence,
\begin{equation*}
w_{0}(x)=\frac{Q(|x|)}{\sqrt{a^\ast}}.
\end{equation*}
Moreover, using \eqref{eq2.51}, similar to the proof of \cite[Theorem 1.1]{GZZ}, we deduce that $z_k$ is the unique maximum point of $u_k$ and $z_{k}$ goes to a global minimum point of $V(x)$ as $k\rightarrow\infty$ by \eqref{eq2.34}.
\end{proof}
\section{Concentration behavior for homogeneous type potential.}
\quad The aim of this section is to show that, if there are more information on the global minimum point of $V(x)$, such as \eqref{eq1.13} and \eqref{eq1.14}, then we can get more precise description on the concentration behavior for the minimizers of \eqref{eq1.2} as $b\rightarrow0^+$, i.e., Theorem \ref{th2}. To prove this Theorem, we need some detailed estimates on the energy $e_{a}(b)$ for $a=a^\ast$ as $b\rightarrow0^+$.
\begin{lemma}\label{lemma3.1}
Let $V(x)$ satisfy \eqref{eq1.4} and \eqref{eq1.13}--\eqref{eq1.14}. If $a=a^\ast$, then
\begin{equation}\label{eq3.1}
\limsup_{b\rightarrow0^+}\frac{e_{a^\ast}(b)}{b^{\frac{p}{p+4}}}\leq\frac{p+4}{4p}\left(\frac{p\lambda_{0}}{2a^\ast}\right)^{\frac{4}{p+4}},
\end{equation}
where $p$ and $\lambda_{0}$ are given by \eqref{eq1.16} and \eqref{eq1.17}, respectively.
\end{lemma}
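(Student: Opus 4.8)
The plan is to obtain \eqref{eq3.1}, which is only a $\limsup$ (upper) bound, by evaluating $E_{a^\ast}^{b}$ on a carefully chosen trial function built from the profile $Q$. Guided by the anticipated concentration, I would fix a flattest minimum point $x_{i_0}\in Z_0$ (so that $p_{i_0}=p$ and $\lambda_{i_0}=\lambda_0$ by \eqref{eq1.16}--\eqref{eq1.18}) together with a minimizer $y_0$ of $H_{i_0}$, i.e.\ $H_{i_0}(y_0)=\min_{y}H_{i_0}(y)=\lambda_0$, and set, for a free scaling parameter $\tau>0$,
\begin{equation*}
u_\tau(x)=\frac{A_\tau\,\tau}{\sqrt{a^\ast}}\,\xi(x-x_{i_0})\,Q\bigl(\tau(x-x_{i_0})-y_0\bigr),
\end{equation*}
where $\xi$ is the cut-off from \eqref{eq2.12} and $A_\tau>0$ is chosen so that $\|u_\tau\|_2=1$. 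Exactly as in \eqref{eq2.23}, the exponential decay \eqref{eq2.3} of $Q$ gives $0\le A_\tau^{2}-1\le Ce^{-c\tau}$, and every error coming from the cut-off is $O(e^{-c\tau})$; since the optimal $\tau$ found below tends to $\infty$ as $b\to0^{+}$, these corrections are negligible.

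Next I would compute the three pieces of $E_{a^\ast}^{b}(u_\tau)$. Using \eqref{eq2.2} one finds, up to $O(e^{-c\tau})$, that $\int_{\mathbb R^2}|\nabla u_\tau|^{2}\,dx=\tau^{2}$ and $\int_{\mathbb R^2}|u_\tau|^{4}\,dx=2\tau^{2}/a^\ast$; the decisive feature of the critical case $a=a^\ast$ is that the gradient and quartic terms then cancel,
\begin{equation*}
\frac12\int_{\mathbb R^2}|\nabla u_\tau|^{2}\,dx-\frac{a^\ast}{4}\int_{\mathbb R^2}|u_\tau|^{4}\,dx=O(e^{-c\tau}),
\end{equation*}
so that only the potential and nonlocal terms survive, namely $E_{a^\ast}^{b}(u_\tau)=\frac12\int_{\mathbb R^2}V u_\tau^{2}\,dx+\frac b4\tau^{4}+O(e^{-c\tau})$. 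For the potential term, the change of variables $y=\tau(x-x_{i_0})-y_0$ yields $\int_{\mathbb R^2} V u_\tau^{2}\,dx=\frac{A_\tau^{2}}{a^\ast}\int_{\mathbb R^2} V\bigl(x_{i_0}+\tau^{-1}(y+y_0)\bigr)\xi^{2}\,Q(y)^{2}\,dy$. By the almost-homogeneity \eqref{eq1.14} and the homogeneity of degree $p$ of $V_{i_0}$ (Definition \ref{de1}), $\tau^{p}V\bigl(x_{i_0}+\tau^{-1}(y+y_0)\bigr)\to V_{i_0}(y+y_0)$ pointwise as $\tau\to\infty$, while the bound $0\le V_{i_0}(x)\le C|x|^{p}$ combined with \eqref{eq2.3} supplies an integrable majorant; dominated convergence then gives
\begin{equation*}
\tau^{p}\int_{\mathbb R^2} V u_\tau^{2}\,dx\longrightarrow\frac1{a^\ast}\int_{\mathbb R^2}V_{i_0}(y+y_0)Q(y)^{2}\,dy=\frac{H_{i_0}(y_0)}{a^\ast}=\frac{\lambda_0}{a^\ast}.
\end{equation*}

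Combining these, $E_{a^\ast}^{b}(u_\tau)=\frac{\lambda_0}{2a^\ast}\tau^{-p}(1+o(1))+\frac b4\tau^{4}+O(e^{-c\tau})$ as $\tau\to\infty$, and it remains to optimise the leading part $g(\tau):=\frac{\lambda_0}{2a^\ast}\tau^{-p}+\frac b4\tau^{4}$. Solving $g'(\tau)=0$ gives $\tau^{p+4}=\frac{p\lambda_0}{2a^\ast b}$ --- precisely the scaling $\overline{\epsilon}_k^{-1}$ in \eqref{eq1.20} with $b$ in place of $b_k$ --- and, using $\frac{\lambda_0}{2a^\ast}\tau^{-p}=\frac{4}{p}\cdot\frac b4\tau^{4}$ at this value, one obtains $g(\tau)=\frac{p+4}{4p}\,b\,\tau^{4}=\frac{p+4}{4p}\bigl(\frac{p\lambda_0}{2a^\ast}\bigr)^{4/(p+4)}b^{p/(p+4)}$. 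Dividing by $b^{p/(p+4)}$ and letting $b\to0^{+}$ (so $\tau\to\infty$, whence the $o(1)$ and $O(e^{-c\tau})$ terms vanish) yields \eqref{eq3.1}. I expect the genuinely delicate step to be the potential estimate: the trial function is supported where $|x-x_{i_0}|\le2$ rather than only near $x_{i_0}$, so justifying $\tau^{p}\int_{\mathbb R^2} V u_\tau^{2}\to\lambda_0/a^\ast$ requires splitting the integral into an inner region where \eqref{eq1.14} is effective and an outer region absorbed by the exponential decay \eqref{eq2.3} of $Q$, and checking that the almost-homogeneity error stays $o(\tau^{-p})$.
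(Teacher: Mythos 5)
Your proposal is correct and follows essentially the same route as the paper: the paper uses the identical trial function $u_\tau$ from \eqref{eq2.22} centered at $x_0=x_{i_0}+\tau^{-1}y_0$ (equivalent, up to exponentially small cut-off errors, to your placing the shift $y_0$ inside the argument of $Q$), establishes the same cancellation of the gradient and quartic terms at $a=a^\ast$ via \eqref{eq2.23}--\eqref{eq2.25}, obtains $\int_{\mathbb R^2}V u_\tau^2\,dx=\frac{\lambda_0}{a^\ast\tau^p}(1+o(1))$ from \eqref{eq1.14} exactly as in your dominated-convergence step, and then optimizes with the same choice $\tau=\left(\frac{p\lambda_0}{2ba^\ast}\right)^{\frac{1}{p+4}}$. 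Your closing remark about splitting the potential integral into an inner region where \eqref{eq1.14} applies and an exponentially small outer region is precisely the justification implicit in the paper's computation \eqref{eq3.2}, so no gap remains.
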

\begin{proof}[\textup{\textbf{Proof}}]
Take $x_{i_{0}}\in Z_{0}$ and $y_0$ satisfying $H_{i_{0}}(y_{0})=\lambda_{0}$. Let $u_{\tau}(x)$ be given by \eqref{eq2.22} and take $x_{0}=x_{i_{0}}+\frac{1}{\tau}y_{0}$.
Then, it follows from \eqref{eq1.14} and \eqref{eq2.23}--\eqref{eq2.25} that
\begin{equation}\label{eq3.2}
\aligned
\int_{\mathbb R^{2}}V(x)|u_{\tau}|^{2}dx&=
\frac{A_{\tau}^{2}}{\|Q\|_{L^{2}}^{2}}\int_{\mathbb R^{2}}V((x+y_{0})/\tau+x_{i_{0}})\xi^{2}(x/\tau)Q^{2}(x)dx\\
&=\frac{A_{\tau}^{2}}{\|Q\|_{L^{2}}^{2}}\int_{\mathbb R^{2}}\frac{V((x+y_{0})/\tau+x_{i_{0}})}{V_{i_{0}}((x+y_{0})/\tau)}V_{i_{0}}((x+y_{0})/\tau)\xi^{2}(x/\tau)Q^{2}(x)dx\\
&=\frac{\lambda_{0}}{a^\ast\tau^{p}}(1+o(1))\ \mbox{as}\ \tau\rightarrow\infty,
\endaligned
\end{equation}
and
\begin{equation}\label{eq3.3}
e_{a^\ast}(b)\leq E_{a^\ast}^{b}(u_{\tau})=\frac{b}{4}\tau^{4}+\frac{\lambda_{0}}{2a^\ast\tau^{p}}(1+o(1))+Ce^{-\frac{1}{2}\tau}\ \mbox{as}\ \tau\rightarrow\infty.
\end{equation}
Take $\tau=\left(\frac{p\lambda_{0}}{2ba^\ast}\right)^{\frac{1}{p+4}}$, then $\tau\rightarrow\infty\ \mbox{as}\ b\rightarrow0^+$. It follows from \eqref{eq3.2} and \eqref{eq3.3} that
\begin{equation*}
e_{a^\ast}(b)\leq \frac{b^{\frac{p}{p+4}}}{4}\left(\frac{p\lambda_{0}}{2a^\ast}\right)^{\frac{4}{p+4}}+
\frac{b^{\frac{p}{p+4}}}{p}\left(\frac{p\lambda_{0}}{2a^\ast}\right)^{\frac{4}{p+4}}(1+o(1))+Ce^{-\frac{1}{2}\tau}\ \mbox{as}\ b\rightarrow0^+.
\end{equation*}
This shows that
\begin{equation*}
\limsup_{b\rightarrow0^+}\frac{e_{a^\ast}(b)}{b^{\frac{p}{p+4}}}\leq\frac{1}{4}\left(\frac{p\lambda_{0}}{2a^\ast}\right)^{\frac{4}{p+4}}+
\frac{1}{p}\left(\frac{p\lambda_{0}}{2a^\ast}\right)^{\frac{4}{p+4}}=\frac{p+4}{4p}\left(\frac{p\lambda_{0}}{2a^\ast}\right)^{\frac{4}{p+4}}.
\end{equation*}
\end{proof}
Now, we come to prove Theorem \ref{th2}.
\begin{proof}[\textup{\textbf{Proof of Theorem \ref{th2}}}]
We prove this theorem by considering two cases, respectively.

\textbf{Case I : $a=a^\ast$.} By Lemma \ref{lemma3.1}, we know that
$\underset{k\to\infty}\limsup\frac{e_{a^\ast}(b_{k})}{b_{k}^{\frac{p}{p+4}}}$
has a upper estimates. Therefore, we need only to show that the limit has the same lower bound to prove \eqref{eq1.21*} of Theorem \ref{th2}.
Let $u_{k}$ be a nonnegative minimizer for $e_{a^\ast}(b_{k})$ and $w_{k}(x)$ be defined by \eqref{eq2.32}, where $b_{k}\xrightarrow{k\to\infty}0^{+}$. Then, passing to a subsequence, we know from  Theorem \ref{th1} that each $u_{k}$ has a
unique maximum point $z_{k}$ such that $z_{k}\rightarrow x_{0} \mbox{ as }\ k\rightarrow\infty\ \mbox{with}\ V(x_0)=0$. We may assume $x_{0}=x_{i_0}$ for some $1\leq i_{0}\leq m$.

We claim that
\begin{equation}\label{eq3.4}
p_{i_{0}}=p\ \ \mbox{and }\ \left\{\frac{z_{k}-x_{i_{0}}}{\epsilon_{k}}\right\}\mbox{ is bounded},
\end{equation}
where $\epsilon_{k}$ is given by \eqref{eq1.9}. Otherwise, if $p_{i_{0}}<p$ or $\underset{k\to\infty}\lim\left|\frac{z_{k}-x_{i_{0}}}{\epsilon_{k}}\right|=+\infty$,  then using $V_{i_{0}}(tx)=t^{p_{i_{0}}}V_{i_{0}}(tx)$, \eqref{eq1.8} and \eqref{eq1.14} imply that, for any $M>0$ large enough,
\begin{equation*}
\aligned
\liminf_{k\rightarrow\infty}\frac{1}{\epsilon_{k}^{p}}\int_{\mathbb R^{2}}V(x)|u_{k}|^{2}dx
&=\liminf_{k\rightarrow\infty}\frac{1}{\epsilon_{k}^{p}}\int_{\mathbb R^{2}}V(\epsilon_{k}x+z_{k})|w_{k}|^{2}dx\\
&=\liminf_{k\rightarrow\infty}\frac{1}{\epsilon_{k}^{p-p_{i_{0}}}}\int_{\mathbb R^{2}}\frac{V(\epsilon_{k}x+z_{k})}{V_{i_{0}}
(\epsilon_{k}x+z_{k}-x_{i_{0}})}V_{i_{0}}\left(x+\frac{z_{k}-x_{i_{0}}}{\epsilon_{k}}\right)|w_{k}|^{2}dx\geq M.
\endaligned
\end{equation*}
Hence, by the Gagliardo-Nirenberg inequality \eqref{eq2.1} and Young's inequality, we see that
\begin{equation}\label{eq3.5}
e_{a^\ast}(b_k)=E_{a^\ast}^{b_k}(u_{k})\geq\frac{b_{k}}{4\epsilon_{k}^{4}}+\frac{M}{2}\epsilon_{k}^{p}\geq CM^{\frac{4}{p+4}}b_{k}^{\frac{p}{p+4}},
\end{equation}
which contradicts \eqref{eq3.1} if $M>0$ large enough. So, \eqref{eq3.4} is proved. Therefore, passing to a subsequence, we may assume that there exists  $y_{0}\in\mathbb R^2$ such that
\begin{equation}\label{eq3.5*}
\frac{z_{k}-x_{i_{0}}}{\epsilon_{k}}\rightarrow y_{0}\ \mbox{as}\ k\rightarrow\infty.
\end{equation}
It follows from \eqref{eq1.8} and \eqref{eq1.14} that
\begin{equation}\label{eq3.6}
\aligned
\liminf_{k\rightarrow\infty}\frac{1}{\epsilon_{k}^{p}}\int_{\mathbb R^{2}}V(x)|u_{k}|^{2}dx
&=\liminf_{k\rightarrow\infty}\frac{1}{\epsilon_{k}^{p}}\int_{\mathbb R^{2}}V(\epsilon_{k}x+z_{k})|w_{k}|^{2}dx\\
&=\liminf_{k\rightarrow\infty}\int_{\mathbb R^{2}}\frac{V(\epsilon_{k}x+z_{k})}{V_{i_{0}}(\epsilon_{k}x+z_{k}-x_{i_{0}})}V_{i_{0}}
\left(x+\frac{z_{k}-x_{i_{0}}}{\epsilon_{k}}\right)|w_{k}|^{2}dx\\
&=\frac{1}{a^\ast}\int_{\mathbb R^{2}}V_{i_{0}}(x+y_{0})Q^2(x)dx\\
&\geq\frac{\lambda_{i_{0}}}{a^\ast}\geq\frac{\lambda_{0}}{a^\ast}.
\endaligned
\end{equation}
This and Young's inequality imply that
\begin{equation*}
e_{a^\ast}(b_k)=E_{a^\ast}^{b_k}(u_{k})\geq\frac{b_{k}}{4\epsilon_{k}^{4}}+\frac{\lambda_{0}\epsilon_{k}^{p}}{2a^\ast}(1+o(1))
\geq\frac{b_{k}^{\frac{p}{p+4}}(p+4)(1+o(1))}{4p}\left(\frac{p\lambda_{0}}{2a^\ast}\right)^{\frac{4}{p+4}}.
\end{equation*}
Hence,
\begin{equation}\label{eq3.7}
\liminf_{k\rightarrow\infty}\frac{e_{a^\ast}(b_{k})}{b_{k}^{\frac{p}{p+4}}}\geq\frac{p+4}{4p}\left(\frac{p\lambda_{0}}{2a^\ast}\right)^{\frac{4}{p+4}},
\end{equation}
where the equality holds if and only if $\lambda_{i_{0}}=\lambda_{0}$, $H_{i_{0}}(y_{0})=\lambda_{0}$ and
\begin{equation}\label{eq3.8}
\lim_{k\rightarrow\infty}\frac{\epsilon_{k}}{\overline{\epsilon}_{k}}=1,\ \mbox{where}\ \overline{\epsilon}_{k}\ \mbox{defined in}\ \eqref{eq1.20}.
\end{equation}
Moreover, it follows from Lemma \ref{lemma3.1} that
\begin{equation}\label{eq3.7*}
\lim_{k\rightarrow\infty}\frac{e_{a^\ast}(b_{k})}{b_{k}^{\frac{p}{p+4}}}=\frac{p+4}{4p}\left(\frac{p\lambda_{0}}{2a^\ast}\right)^{\frac{4}{p+4}}.
\end{equation}
This shows that all inequalities in \eqref{eq3.6} and \eqref{eq3.7} become equalities, and $H_{i_{0}}(y_{0})=\lambda_{i_0}=\lambda_{0}$. Therefore, \eqref{eq1.19} follows from \eqref{eq1.8} and \eqref{eq3.8}. Also \eqref{eq1.21**} follows from \eqref{eq3.5*}.

\textbf{Case II : $a>a^\ast$.} Take $x_{i_{0}}\in Z_{0}$ and $y_0$ satisfying $H_{i_{0}}(y_{0})=\lambda_{0}$. Let
\begin{equation*}
\widehat{u}_{k}(x)=A_{k}\xi(x-x_{i_{0}}-\overline{\epsilon}_{k}y_0)\overline{u}_{k}(x-x_{i_{0}}-\overline{\epsilon}_{k}y_0),
\end{equation*}
where $\overline{u}_{k}(x)\triangleq \overline{u}_{b_{k}}(x)$ and $\xi(x)$ are defined in \eqref{eq2.5} and  \eqref{eq2.12}, respectively, and $A_{k}>0$ is chosen so that $\int_{\mathbb R^2}|\widehat{u}_{k}|^2dx=1$. Note from \eqref{eq1.20} and \eqref{eq2.5} that $\overline{\epsilon}_{k}=r_{b_{k}}^{-\frac{1}{2}}$. Then, similar to \eqref{eq2.14}--\eqref{eq2.17}, it follows \eqref{eq1.14} and the dominated convergence theorem that
\begin{equation*}
\aligned
e_{a}(b_{k})-\overline{e}_{a}(b_{k})
&\leq E_{a}^{b_{k}}(\widehat{u}_{k})-\overline{E}_{a}^{b_{k}}(\overline{u}_{k})
\leq\frac{1}{2}\int_{\mathbb R^{2}}V(x)|\widehat{u}_{k}|^{2}dx+O(e^{-\frac{1}{2\overline{\epsilon}_{k}}})\\
&=\frac{1}{2a^\ast}\int_{\mathbb R^{2}}V(\overline{\epsilon}_{k}x+x_{i_{0}}+\overline{\epsilon}_{k}y_{0})\xi^{2}(\overline{\epsilon}_{k}x)Q^{2}(x)dx
+O(e^{-\frac{1}{2\overline{\epsilon}_{k}}})\\
&=\frac{1}{2a^\ast}\int_{\mathbb R^{2}}\frac{V(\overline{\epsilon}_{k}x+x_{i_{0}}+\overline{\epsilon}_{k}y_{0})}{V_{i_{0}}(\overline{\epsilon}_{k}x+\overline{\epsilon}_{k}y_{0})}
V_{i_{0}}(\overline{\epsilon}_{k}x+\overline{\epsilon}_{k}y_{0})
\xi^{2}(\overline{\epsilon}_{k}x)Q^{2}(x)dx
+O(e^{-\frac{1}{2\overline{\epsilon}_{k}}})\\
&=\frac{\overline{\epsilon}_{k}^{p}}{2a^\ast}\int_{\mathbb R^{2}}\frac{V(\overline{\epsilon}_{k}x+x_{i_{0}}+\overline{\epsilon}_{k}y_{0})}{V_{i_{0}}(\overline{\epsilon}_{k}x+\overline{\epsilon}_{k}y_{0})}
V_{i_{0}}(x+y_{0})
\xi^{2}(\overline{\epsilon}_{k}x)Q^{2}(x)dx
+O(e^{-\frac{1}{2\overline{\epsilon}_{k}}})\\
&=\frac{\overline{\epsilon}_{k}^{p}\lambda_{0}}{2a^\ast}(1+o(1))\ \mbox{as}\ k\rightarrow\infty.
\endaligned
\end{equation*}
This implies that
\begin{equation}\label{eq3.9}
\limsup_{k\rightarrow\infty}\frac{e_{a}(b_{k})-\overline{e}_{a}(b_{k})}{\overline{\epsilon}_{k}^{p}}\leq\frac{\lambda_{0}}{2a^\ast}.
\end{equation}

Let $u_{k}$ be a nonnegative minimizer of $e_{a}(b_{k})$, passing to a subsequence, we know from  Theorem \ref{th1} that each $u_{k}$ has a
unique maximum point $z_{k}$ such that $z_{k}\rightarrow x_{0} \mbox{ as }\ k\rightarrow\infty\ \mbox{with}\ V(x_0)=0$. We may assume $x_{0}=x_{i_0}$ for some $1\leq i_{0}\leq m$.
It follows from \eqref{eq1.8} and \eqref{eq1.14} that
\begin{equation}\label{eq3.99*}
\aligned
&\liminf_{k\rightarrow\infty}\frac{e_{a}(b_{k})-\overline{e}_{a}(b_{k})}{\overline{\epsilon}_{k}^{p}}
\geq\liminf_{k\rightarrow\infty}\frac{1}{2\overline{\epsilon}_{k}^{p}}\int_{\mathbb R^{2}}V(x)|u_{k}|^{2}dx\\
&\quad=\liminf_{k\rightarrow\infty}\frac{1}{2\overline{\epsilon}_{k}^{p}}\int_{\mathbb R^{2}}V(\overline{\epsilon}_{k}x+z_{k})|w_{k}(x)|^{2}dx\\
&\quad=\liminf_{k\rightarrow\infty}\frac{1}{2\overline{\epsilon}_{k}^{p-p_{i_0}}}\int_{\mathbb R^{2}}\frac{V(\overline{\epsilon}_{k}x+z_{k}-x_{i_{0}}+x_{i_{0}})}{V_{i_{0}}(\overline{\epsilon}_{k}x+z_{k}-x_{i_{0}})}
V_{i_{0}}\left(x+\frac{z_{k}-x_{i_{0}}}{\overline{\epsilon}_{k}}\right)|w_{k}(x)|^{2}dx.
\endaligned
\end{equation}
Combining \eqref{eq3.9} and \eqref{eq3.99*}, we deduce from \eqref{eq1.4} and \eqref{eq1.14} that $p_{i_{0}}=p$ and $\left\{\frac{z_{k}-x_{i_{0}}}{\overline{\epsilon}_{k}}\right\}$ is bounded sequence in $\mathbb R^2$. So, we may assume that there exists $y_0\in\mathbb R^2$ such that
\begin{equation}\label{eq3.10*}
\frac{z_{k}-x_{i_{0}}}{\overline{\epsilon}_{k}}\rightarrow y_{0}\ \mbox{as}\ k\rightarrow\infty.
\end{equation}
Using \eqref{eq1.8} and \eqref{eq1.14} we know that
\begin{equation}\label{eq3.10}
\liminf_{k\rightarrow\infty}\frac{e_{a}(b_{k})-\overline{e}_{a}(b_{k})}{\overline{\epsilon}_{k}^{p}}
\geq\frac{1}{2a^\ast}\int_{\mathbb R^{2}}V_{i_{0}}(x+y_{0})Q^{2}(x)dx
\geq\frac{\lambda_{i_{0}}}{2a^\ast}
\geq\frac{\lambda_{0}}{2a^\ast},
\end{equation}
where \eqref{eq3.10} becomes equalities if and only if $\lambda_{i_{0}}=\lambda_{0}$ and $H_{i_{0}}(y_{0})=\lambda_{0}$. Combining \eqref{eq3.9} and \eqref{eq3.10}, we have
\begin{equation*}
\lim_{k\rightarrow\infty}\frac{e_{a}(b_{k})-\overline{e}_{a}(b_{k})}{\overline{\epsilon}_{k}^{p}}=\frac{\lambda_{0}}{2a^\ast}.
\end{equation*}
Therefore, \eqref{eq3.10} becomes equalities, and hence \eqref{eq1.21**} holds by \eqref{eq3.10*}.
\end{proof}
\section{Uniqueness of minimizers: Proof of Theorem \ref{th4}.}
\quad In this section, we come to prove Theorem \ref{th4}, that is, the uniqueness of the minimizers of $e_{a}(b)$ when $b>0$ small enough. For this purpose, we argue by contradiction. If for any given $a\geq a^\ast$, there exist two different nonnegative minimizers $u_{1,k}$ and $u_{2,k}$ for $e_{a}(b_{k})$. Let $z_{1,k}$ and $z_{2,k}$ be the unique maximum point of $u_{1,k}$ and $u_{2,k}$, respectively. By \eqref{eq2.35}, the minimizers $u_{i,k}$ satisfy the following equation
\begin{equation}\label{eq4.1}
-\left(1+b_{k}\int_{\mathbb R^{2}}|\nabla u_{i,k}|^{2}dx\right)\Delta u_{i,k}+V(x)u_{i,k}=\mu_{i,k} u_{i,k}+au_{i,k}^{3},\  x\in \mathbb{R}^{2},\ i=1,2,
\end{equation}
where $\mu_{i,k}\in\mathbb R$ is a suitable lagrange multiplier.
Since $u_{1,k}(x)\not\equiv u_{2,k}(x)$, set
\begin{equation}\label{eq4.2}
\overline{u}_{i,k}(x)=\sqrt{a^\ast}\overline{\epsilon}_{k}u_{i,k}(\overline{\epsilon}_{k}x+z_{1,k}),\ i=1,2,
\end{equation}
and
\begin{equation}\label{eq4.3}
\overline{\eta}_{k}(x)=\frac{\overline{u}_{1,k}(x)-\overline{u}_{2,k}(x)}{||\overline{u}_{1,k}-\overline{u}_{2,k}||_{L^{\infty}}}.
\end{equation}
It follows from Theorem \ref{th2} that
\begin{equation*}
\overline{u}_{i,k}(x)\rightarrow Q(x)\ \mbox{uniformly in}\ \mathbb R^{2}\ \mbox{as}\ k\rightarrow\infty.
\end{equation*}
Moreover, $\overline{u}_{i,k}$ and $\overline{\eta}_{k}$ satisfy
\begin{equation}\label{eq4.4}
-\left(1+\frac{b_{k}}{a^{\ast}\overline{\epsilon}_{k}^{2}}\int_{\mathbb R^{2}}|\nabla \overline{u}_{i,k}|^{2}dx\right)\Delta \overline{u}_{i,k}+\overline{\epsilon}_{k}^{2}V(\overline{\epsilon}_{k}x+z_{1,k})\overline{u}_{i,k}
=\mu_{i,k}\overline{\epsilon}_{k}^{2} \overline{u}_{i,k}+\frac{a}{a^\ast}\overline{u}_{i,k}^{3},\  x\in \mathbb{R}^{2},\ i=1,2,
\end{equation}
and
\begin{equation}\label{eq4.5}
\aligned
-\left(1+\frac{b_{k}}{a^{\ast}\overline{\epsilon}_{k}^{2}}\int_{\mathbb R^{2}}|\nabla \overline{u}_{2,k}|^{2}dx\right)\Delta \overline{\eta}_{k}
&-\frac{b_{k}}{a^{\ast}\overline{\epsilon}_{k}^{2}}\int_{\mathbb R^{2}}\nabla(\overline{u}_{1,k}
+\overline{u}_{2,k})\nabla\overline{\eta}_{k}dx\Delta\overline{u}_{1,k}\\
&+\overline{\epsilon}_{k}^{2}V(\overline{\epsilon}_{k}x+z_{1,k})\overline{\eta}_{k}
=\mu_{1,k}\overline{\epsilon}_{k}^{2} \overline{\eta}_{k}+\overline{g}_{k}(x)+\overline{f}_{k}(x),
\endaligned
\end{equation}
where
\begin{equation}\label{eq4.6}
\overline{g}_{k}(x)=\overline{\epsilon}_{k}^{2}\frac{\mu_{1,k}-\mu_{2,k}}{||\overline{u}_{1,k}-\overline{u}_{2,k}||_{L^{\infty}}}\overline{u}_{2,k}\ \mbox{and}\ \overline{f}_{k}(x)=\frac{a\overline{\eta}_{k}}{a^\ast}(\overline{u}_{1,k}^{2}+\overline{u}_{1,k}\overline{u}_{2,k}+\overline{u}_{2,k}^{2}).
\end{equation}

The following lemma gives the decay estimates on $\overline{u}_{i,k}$ and $|\nabla \overline{u}_{i,k}|$ ($i=1,2$), which are required in proving Theorem \ref{th4}.
\begin{lemma}\label{lemma4.1}
Let $a\geq a^\ast$ and $V(x)$ satisfy the assumptions of Theorem \ref{th4}. If $u_{i,k}$ $(i=1,2)$ are two nonnegative minimizers of $e_{a}(b_k)$ with $b_{k}\xrightarrow{k\to\infty}0^{+}$. Then, up to a subsequence, we have
\begin{equation}\label{eq4.7}
\overline{u}_{i,k}(x)\leq Ce^{-\frac{|x|}{2}}\ \mbox{and}\ \ |\nabla \overline{u}_{i,k}(x)|\leq Ce^{-\frac{|x|}{4}}\ \mbox{as}\ |x|\rightarrow\infty,\ i=1,2,
\end{equation}
where $C>0$ is a constant independent of $k$.
\end{lemma}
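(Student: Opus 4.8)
The plan is to regard \eqref{eq4.4} as a semilinear equation with bounded constant coefficients for each fixed $k$, run a comparison argument on an exterior domain to obtain the pointwise decay of $\overline{u}_{i,k}$, and then bootstrap to the gradient via interior elliptic estimates. First I would pin down the asymptotics of the two constants in \eqref{eq4.4}. Using the scaling \eqref{eq4.2} and a change of variables one gets $\frac{b_k}{a^\ast\overline{\epsilon}_k^2}\int_{\mathbb R^2}|\nabla\overline{u}_{i,k}|^2dx=b_k\int_{\mathbb R^2}|\nabla u_{i,k}|^2dx$; for $a>a^\ast$ this converges to $\frac{a-a^\ast}{a^\ast}$ by Lemma \ref{lemma2.3} together with $b_kr_{b_k}=\frac{a-a^\ast}{a^\ast}$, while for $a=a^\ast$ it converges to $0$ by \eqref{eq1.19} and the definition \eqref{eq1.20} of $\overline{\epsilon}_k$. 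Hence the coefficient $\kappa_{i,k}:=1+\frac{b_k}{a^\ast\overline{\epsilon}_k^2}\int_{\mathbb R^2}|\nabla\overline{u}_{i,k}|^2dx\to\frac{a}{a^\ast}$ in both cases, and similarly $\mu_{i,k}\overline{\epsilon}_k^2\to-\frac{a}{a^\ast}$ by \eqref{eq2.40} and $\epsilon_k/\overline{\epsilon}_k\to1$ from \eqref{eq3.8}.

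Since $V\geq0$, dropping the nonnegative term $\overline{\epsilon}_k^2V(\overline{\epsilon}_kx+z_{1,k})\overline{u}_{i,k}$ from \eqref{eq4.4} gives
\[
-\Delta\overline{u}_{i,k}+\frac{|\mu_{i,k}\overline{\epsilon}_k^2|}{\kappa_{i,k}}\,\overline{u}_{i,k}\leq\frac{a}{a^\ast\kappa_{i,k}}\,\overline{u}_{i,k}^2\,\overline{u}_{i,k}\quad\mbox{in }\mathbb R^2.
\]
By Theorem \ref{th2} one has $\overline{u}_{i,k}\to Q$ uniformly and $Q$ vanishes at infinity, so I can fix $R>0$ and absorb the cubic term into the linear one: for $k$ large there is $c>0$ (say $c=\tfrac12$) with $-\Delta\overline{u}_{i,k}+c\,\overline{u}_{i,k}\leq0$ for $|x|\geq R$. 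Comparing with the radial supersolution $W(x)=Me^{-|x|/2}$, which satisfies $-\Delta W+cW=(c-\tfrac14+\tfrac{1}{2|x|})W>0$ for $|x|$ large because $\tfrac14<c$, and choosing $M$ with $W\geq\overline{u}_{i,k}$ on $|x|=R$, the maximum principle on $\{|x|\geq R\}$ yields $\overline{u}_{i,k}(x)\leq Ce^{-|x|/2}$; the bound on $\{|x|\leq R\}$ is immediate from the uniform $L^\infty$ control.

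For the gradient I would write \eqref{eq4.4} as $-\Delta\overline{u}_{i,k}=\tilde g_k$ with $\tilde g_k=\frac{1}{\kappa_{i,k}}\big[(\mu_{i,k}\overline{\epsilon}_k^2-\overline{\epsilon}_k^2V(\overline{\epsilon}_kx+z_{1,k}))\overline{u}_{i,k}+\frac{a}{a^\ast}\overline{u}_{i,k}^3\big]$ and insert the decay just obtained. The cubic and the $\mu_{i,k}\overline{\epsilon}_k^2$ terms are $O(e^{-|x|/2})$, while the potential contribution is controlled by \eqref{eq1.22}: since $z_{1,k}$ is bounded and $\overline{\epsilon}_k\to0$, for $k$ large $\overline{\epsilon}_k^2V(\overline{\epsilon}_kx+z_{1,k})\leq C\overline{\epsilon}_k^2e^{\beta\overline{\epsilon}_k|x|}$ with $\beta\overline{\epsilon}_k\leq\tfrac14$, whence $\overline{\epsilon}_k^2V\,\overline{u}_{i,k}\leq Ce^{-|x|/4}$ and so $|\tilde g_k(x)|\leq Ce^{-|x|/4}$. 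Applying the interior gradient estimate for $-\Delta$ on the unit ball $B_1(x)$ then gives $|\nabla\overline{u}_{i,k}(x)|\leq C\big(\sup_{B_1(x)}\overline{u}_{i,k}+\sup_{B_1(x)}|\tilde g_k|\big)\leq Ce^{-|x|/4}$, which is the second bound in \eqref{eq4.7}.

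The main obstacle is the exponentially growing potential. The comparison step itself only uses $V\geq0$, but the gradient estimate forces me to balance the admissible growth rate $\beta$ of \eqref{eq1.22} against the decay $e^{-|x|/2}$ already obtained, and this mismatch is precisely why the gradient bound is stated with the weaker rate $e^{-|x|/4}$. A secondary point requiring care is that all constants $C,R,M,c$ must be chosen uniformly in $k$; this is guaranteed by the uniform convergence $\overline{u}_{i,k}\to Q$ of Theorem \ref{th2} together with the established limits of $\kappa_{i,k}$ and $\mu_{i,k}\overline{\epsilon}_k^2$.
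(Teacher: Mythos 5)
Your proposal is correct and follows essentially the same route as the paper: a differential inequality $-\Delta\overline{u}_{i,k}+\tfrac12\overline{u}_{i,k}\leq0$ on an exterior region obtained by absorbing the cubic term, comparison with $e^{-|x|/2}$, and then local elliptic gradient estimates using the growth bound \eqref{eq1.22} to control $\overline{\epsilon}_k^2V(\overline{\epsilon}_kx+z_{1,k})\overline{u}_{i,k}$ by $Ce^{-|x|/4}$. The only cosmetic difference is that where you invoke uniform convergence $\overline{u}_{i,k}\to Q$ (which Theorem \ref{th2} states only in $H^1$), the paper makes the uniform vanishing at infinity, uniformly in $k$, explicit via a De Giorgi--Nash--Moser estimate as in \eqref{eq2.43} --- a step worth keeping in mind since the $k$-uniformity of your constants $R$ and $c$ rests on it.
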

\begin{proof}[\textup{\textbf{Proof}}]
By \eqref{eq1.9}, \eqref{eq2.40} and \eqref{eq3.8}, we know that
\begin{equation}\label{eq4.8}
\mu_{i,k}\overline{\epsilon}_{k}^{2}\rightarrow-\frac{a}{a^\ast}\ \mbox{as}\ k\rightarrow\infty.
\end{equation}
From \eqref{eq4.4}, one can derive that
\begin{equation*}
-\Delta \overline{u}_{i,k}\leq c_{i,k}(x)\overline{u}_{i,k}\ \mbox{in}\ \mathbb R^2,\ \mbox{where}\ c_{i,k}(x)=\frac{a}{a^{\ast}}\overline{u}_{i,k}^{2}.
\end{equation*}
Applying De Giorgi-Nash-Moser theory(similar to the proof of \cite[Theorem 4.1]{HL2}), we have
\begin{equation}\label{eq4.8*}
\overline{u}_{i,k}(x)\rightarrow0\ \mbox{as}\ |x|\rightarrow\infty\ \mbox{uniformly on}\ k.
\end{equation}
Using \eqref{eq4.4} and \eqref{eq4.8}--\eqref{eq4.8*}, we obtain that there exists a constant $R>0$ large enough such that
\begin{equation*}
-\Delta \overline{u}_{i,k}+\frac{1}{2}\overline{u}_{i,k}\leq0\ \mbox{and}\ \overline{u}_{i,k}\leq Ce^{-\frac{1}{2}R}\ \mbox{for}\ |x|\geq R,
\end{equation*}
where $C>0$ is a constant independent of $k$. Comparing $\overline{u}_{i,k}$ with $e^{-\frac{1}{2}|x|}$, and the comparison principle implies that
\begin{equation*}
\overline{u}_{i,k}(x)\leq Ce^{-\frac{|x|}{2}}\ \mbox{for}\ |x|\geq R.
\end{equation*}
Furthermore, since $V(x)$ satisfies \eqref{eq1.22}, we have
\begin{equation*}
|\overline{\epsilon}_{k}^{2}V(\overline{\epsilon}_{k}x+z_{1,k})\overline{u}_{i,k}|\leq C^{}e^{-\frac{|x|}{4}}\ \mbox{for}\ |x|\geq R.
\end{equation*}
Then, applying the local elliptic estimates (e.g., $(3.15)$ in \cite{GT}) and the above estimates yield that
\begin{equation*}
|\nabla \overline{u}_{i,k}(x)|\leq Ce^{-\frac{|x|}{4}}\ \mbox{for}\ |x|\geq R.
\end{equation*}
\end{proof}
The next lemma is to give the limit behavior of $\overline{\eta}_{k}$.
\begin{lemma}\label{lemma4.2}
If $a\geq a^\ast$ and all the assumptions of Theorem \ref{th4} hold, then, passing to a subsequence, there exists $\overline{\eta}_{0} \in C^1_{loc}(\mathbb{R}^2)$ such that
\begin{equation}\label{eq4.9}
\overline{\eta}_{k}\rightarrow\overline{\eta}_{0}\ \mbox{in}\ C_{loc}^{1}(\mathbb R^2)\ \mbox{as}\ k\rightarrow\infty.
\end{equation}
Moreover,
\begin{equation}\label{eq4.10}
\overline{\eta}_{0}(x)=d_{0}(Q+x\cdot\nabla Q)+\sum_{i=1}^{2}d_{i}\frac{\partial Q}{\partial x_{i}},\ \mbox{if}\ a=a^\ast,
\end{equation}
and
\begin{equation}\label{eq4.10*}
\overline{\eta}_{0}(x)=h_{0}Q+\overline{h}_{0}x\cdot\nabla Q+\sum_{i=1}^{2}h_{i}\frac{\partial Q}{\partial x_{i}},\ \mbox{if}\ a>a^\ast,
\end{equation}
where $d_{0},d_{1},d_{2}$ and $h_{0},\overline{h}_{0},h_{1},h_{2}$ are some constants.
\end{lemma}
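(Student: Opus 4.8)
The plan is to establish the convergence $\overline{\eta}_k \to \overline{\eta}_0$ in $C^1_{loc}$ via elliptic estimates, and then identify $\overline{\eta}_0$ as an element of the kernel of the operator linearized about $Q$. The starting point is equation \eqref{eq4.5}, which $\overline{\eta}_k$ satisfies, together with the normalization $\|\overline{\eta}_k\|_{L^\infty}=1$ coming from \eqref{eq4.3}. First I would observe that, by Theorem \ref{th2}, we have $\overline{u}_{i,k}\to Q$ uniformly, so the coefficients in \eqref{eq4.5} converge: the nonlocal Kirchhoff coefficient $\frac{b_k}{a^*\overline{\epsilon}_k^2}\int|\nabla\overline{u}_{2,k}|^2\,dx$ converges (to $0$ when $a=a^*$ by the choice of $\overline{\epsilon}_k$ and \eqref{eq2.20}, and to a positive multiple of $\|\nabla Q\|_2^2$ when $a>a^*$ since $\overline{\epsilon}_k^2=r_{b_k}^{-1}$ and $b_k r_{b_k}=\frac{a-a^*}{a^*}$), the potential term $\overline{\epsilon}_k^2 V(\overline{\epsilon}_k x+z_{1,k})\to 0$ locally uniformly because $V$ is locally bounded and $\overline{\epsilon}_k\to0^+$, and the Lagrange multiplier term satisfies $\mu_{1,k}\overline{\epsilon}_k^2\to -a/a^*$ by \eqref{eq4.8}. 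The nonlinear coefficient $\overline{f}_k$ has factor $\frac{a}{a^*}(\overline{u}_{1,k}^2+\overline{u}_{1,k}\overline{u}_{2,k}+\overline{u}_{2,k}^2)\to \frac{3a}{a^*}Q^2$ locally uniformly.

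Second, I would control the term $\overline{g}_k$, which carries the difference $\mu_{1,k}-\mu_{2,k}$ of Lagrange multipliers. Since $\|\overline{\eta}_k\|_{L^\infty}=1$ and $\overline{u}_{2,k}\to Q$, the coefficient $\overline{\epsilon}_k^2\frac{\mu_{1,k}-\mu_{2,k}}{\|\overline{u}_{1,k}-\overline{u}_{2,k}\|_{L^\infty}}$ must be shown to converge to some finite constant; this quantity, call it $h_0$ (or a related combination), becomes one of the unknown coefficients in the limit. The decay estimates of Lemma \ref{lemma4.1} give the uniform bound $\overline{u}_{i,k}(x)\le Ce^{-|x|/2}$ and $|\nabla\overline{u}_{i,k}(x)|\le Ce^{-|x|/4}$, which together with $\|\overline{\eta}_k\|_{L^\infty}=1$ and standard interior Schauder/elliptic $W^{2,q}_{loc}$ estimates applied to \eqref{eq4.5} bootstrap to a uniform $C^{1,\alpha}_{loc}$ bound on $\overline{\eta}_k$. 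By Arzelà--Ascoli and a diagonal argument, a subsequence converges in $C^1_{loc}$ to some $\overline{\eta}_0$, proving \eqref{eq4.9}. Passing \eqref{eq4.5} to the limit, $\overline{\eta}_0$ solves a linear equation of the form
\begin{equation*}
-\Delta\overline{\eta}_0 - \beta\left(\int_{\mathbb R^2}\nabla Q\cdot\nabla\overline{\eta}_0\,dx\right)\Delta Q + \overline{\eta}_0 = 3Q^2\overline{\eta}_0 + (\text{const})\,Q,
\end{equation*}
where $\beta=0$ for $a=a^*$ and $\beta>0$ for $a>a^*$; in the former case this is exactly $\mathcal{L}\overline{\eta}_0 = \tilde h_0 Q$ with $\mathcal{L}:=-\Delta+1-3Q^2$.

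Third, I would identify $\overline{\eta}_0$ using the known structure of $\ker\mathcal{L}$. It is classical (from the nondegeneracy of $Q$, see the references already cited) that the kernel of $\mathcal{L}=-\Delta+1-3Q^2$ in $H^1(\mathbb R^2)$ is spanned precisely by the translations $\partial Q/\partial x_1,\partial Q/\partial x_2$. The function $Q+x\cdot\nabla Q$ arises as the scaling generator and satisfies $\mathcal{L}(Q+x\cdot\nabla Q)=-2Q$ (equivalently $\mathcal{L}(x\cdot\nabla Q)=-2Q+ \text{lower order}$), which is why it appears in \eqref{eq4.10}: solving $\mathcal{L}\overline{\eta}_0 = \tilde h_0 Q$ yields a particular solution proportional to $Q+x\cdot\nabla Q$ plus an arbitrary kernel element. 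For $a>a^*$ the extra nonlocal term $-\beta(\int\nabla Q\cdot\nabla\overline{\eta}_0)\Delta Q$ shifts the particular solution, splitting the coefficient of $Q$ and of $x\cdot\nabla Q$ into two independent constants, which accounts for the more general form \eqref{eq4.10*}. \textbf{The main obstacle} I anticipate is twofold: first, verifying that $\overline{\eta}_0\in H^1$ (or decays sufficiently) so that the kernel characterization applies — this requires transferring the uniform exponential decay of Lemma \ref{lemma4.1} to $\overline{\eta}_k$ via a comparison-principle argument on the region $|x|\ge R$, using that $3Q^2\to0$ and the limiting potential vanishes; and second, handling the nonlocal gradient-coupling term in \eqref{eq4.5} when passing to the limit, since it involves $\int\nabla(\overline{u}_{1,k}+\overline{u}_{2,k})\cdot\nabla\overline{\eta}_k\,dx$ — one must justify that this integral converges (using the $C^1_{loc}$ convergence together with the uniform gradient decay) to $2\int\nabla Q\cdot\nabla\overline{\eta}_0\,dx$ rather than leaking mass at infinity.
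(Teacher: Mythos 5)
Your route is, in outline, the same as the paper's: the uniform $C^{1,\alpha}_{loc}$ bound from $\|\overline{\eta}_k\|_{L^\infty}=1$ and interior elliptic estimates applied to \eqref{eq4.5}, Arzel\`a--Ascoli for \eqref{eq4.9}, the dichotomy $\frac{b_k}{\overline{\epsilon}_k^2}\to 0$ for $a=a^*$ versus $\frac{b_k}{\overline{\epsilon}_k^2}=\frac{a-a^*}{a^*}$ for $a>a^*$ (the paper's \eqref{eq4.13}), and identification of $\overline{\eta}_0$ through $\Gamma=-\Delta+(1-3Q^2)$ with $\Gamma(Q+x\cdot\nabla Q)=-2Q$, $\Gamma(x\cdot\nabla Q)=-2\Delta Q$ and $\ker\Gamma=\mathrm{span}\{\partial Q/\partial x_1,\partial Q/\partial x_2\}$. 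The two obstacles you flag at the end (decay of $\overline{\eta}_0$ so the kernel characterization applies, and no loss of mass in the nonlocal integrals) are real but resolvable exactly as you suggest, via Lemma \ref{lemma4.1} and dominated convergence together with $C^1_{loc}$ convergence; the paper handles them implicitly the same way.

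The genuine gap is your second step. You write that the quantity $\overline{\epsilon}_k^2\frac{\mu_{1,k}-\mu_{2,k}}{\|\overline{u}_{1,k}-\overline{u}_{2,k}\|_{L^{\infty}}}$ ``must be shown to converge to some finite constant'' and then simply treat it as an unknown coefficient in the limit --- but you supply no mechanism, and neither its boundedness nor its convergence is free: the denominator tends to zero at an unknown rate relative to $\mu_{1,k}-\mu_{2,k}$, and if this quotient diverged, the limit procedure for \eqref{eq4.5} would break down entirely (one cannot even extract a subsequential limit equation without an a priori bound). The paper never confronts this question abstractly, because it is dissolved by an identity: testing the Euler--Lagrange equation \eqref{eq4.1} against $u_{i,k}$ gives the closed formula \eqref{eq4.11} for $\mu_{i,k}\overline{\epsilon}_k^2$, whose difference is \emph{exactly} divisible by $\overline{u}_{1,k}-\overline{u}_{2,k}$, so that $\overline{g}_k$ takes the explicit form \eqref{eq4.12}, involving only $\int_{\mathbb R^{2}}\nabla(\overline{u}_{1,k}+\overline{u}_{2,k})\cdot\nabla\overline{\eta}_k\,dx$ and $\int_{\mathbb R^{2}}(\overline{u}_{1,k}^{2}+\overline{u}_{2,k}^{2})(\overline{u}_{1,k}+\overline{u}_{2,k})\overline{\eta}_k\,dx$ --- integrals that converge by precisely the decay-plus-$C^1_{loc}$ argument you already invoke for the other nonlocal term. (Indeed, the bound \eqref{eq4.34*} used later in the paper is itself \emph{derived} from this identity, not assumed.) A secondary cost of your version: even granting convergence to an abstract constant, your limit equation determines $\overline{\eta}_0$ only up to that unknown, whereas the paper's identity produces the constants as explicit functionals of $\overline{\eta}_0$ itself, namely $-\frac{2}{a^*}\int_{\mathbb R^{2}}Q^3\overline{\eta}_0\,dx$ and multiples of $\int_{\mathbb R^{2}}\nabla Q\cdot\nabla\overline{\eta}_0\,dx$ in \eqref{eq4.14}--\eqref{eq4.14*}; this explicit structure is not needed for the forms \eqref{eq4.10}--\eqref{eq4.10*} alone, but it is used downstream (e.g., in the reduction of \eqref{eq4.14*} to \eqref{eq4.14} once $h_0=0$ in the proof of Theorem \ref{th4}), so the identity should be incorporated rather than bypassed.
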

\begin{proof}[\textup{\textbf{Proof}}]
Since $||\overline{\eta}_{k}||_{L^\infty}=1$, it follows from \eqref{eq4.3}, \eqref{eq4.5}--\eqref{eq4.6} and the standard elliptic regularity theory that there exists $C>0$, independent of $k$, such that
 \begin{equation*}
 ||\overline{\eta}_{k}||_{C_{loc}^{1,\alpha}(\mathbb R^2)}\leq C\ \mbox{for some}\ \alpha\in(0,1).
 \end{equation*}
 Therefore, passing to a subsequence, there exists some function $\overline{\eta}_{0}(x)\in C_{loc}^{1}(\mathbb R^2)$ such that
 \begin{equation*}
 \overline{\eta}_{k}\rightarrow\overline{\eta}_{0}\ \mbox{in}\ C_{loc}^{1}(\mathbb R^2)\ \mbox{as}\ k\rightarrow\infty.
 \end{equation*}
Applying \eqref{eq4.1} and \eqref{eq4.2}, we know that
\begin{equation}\label{eq4.11}
\mu_{i,k}\overline{\epsilon}_{k}^{2}=2\overline{\epsilon}_{k}^{2}e_{a}(b_k)
+\frac{b_{k}}{2\overline{\epsilon}_{k}^{2}(a^{\ast})^{2}}\left(\int_{\mathbb R^{2}}|\nabla \overline{u}_{i,k}|^{2}dx\right)^{2}
-\frac{a}{2(a^\ast)^{2}}\int_{\mathbb R^{2}}\overline{u}_{i,k}^{4}dx,\ i=1,2.
\end{equation}
This implies that
\begin{equation}\label{eq4.12}
\aligned
\overline{g}_{k}(x)&=\overline{\epsilon}_{k}^{2}\frac{\mu_{1,k}-\mu_{2,k}}{||\overline{u}_{1,k}-\overline{u}_{2,k}||_{L^{\infty}}}\overline{u}_{2,k}\\
&=\frac{b_{k}\overline{u}_{2,k}}{2\overline{\epsilon}_{k}^{2}(a^{\ast})^{2}}\int_{\mathbb R^{2}}(|\nabla \overline{u}_{1,k}|^{2}+|\nabla \overline{u}_{2,k}|^{2})dx\int_{\mathbb R^{2}}\nabla(\overline{u}_{1,k}+\overline{u}_{2,k})\cdot\nabla \overline{\eta}_{k}dx\\
&\quad-\frac{a\overline{u}_{2,k}}{2(a^\ast)^{2}}\int_{\mathbb R^{2}}(\overline{u}_{1,k}^2+\overline{u}_{2,k}^2)(\overline{u}_{1,k}+\overline{u}_{2,k})\overline{\eta}_{k}dx.
\endaligned
\end{equation}
By the definition of \eqref{eq1.20}, we see that
\begin{equation}\label{eq4.13}
\frac{b_{k}}{\overline{\epsilon}_{k}^{2}}=\frac{a-a^\ast}{a^\ast},\ \mbox{if}\ a>a^\ast,\ \mbox{and}\ \  \frac{b_{k}}{\overline{\epsilon}_{k}^{2}}\rightarrow0\ \mbox{as}\ k\rightarrow\infty\ \mbox{if}\ a=a^\ast.
\end{equation}
Thus, let $k\to\infty$ in \eqref{eq4.5}, it follows from \eqref{eq4.6}, \eqref{eq4.12} and \eqref{eq4.13} that $\overline{\eta}_{0}$ satisfies
\begin{equation}\label{eq4.14}
-\Delta\overline{\eta}_{0}+(1-3Q^{2})\overline{\eta}_{0}=-\frac{2Q}{a^\ast}\int_{\mathbb R^{2}}Q^{3}\overline{\eta}_{0}dx,\ \mbox{if}\ a=a^{\ast},
\end{equation}
and
\begin{equation}\label{eq4.14*}
\aligned
-\Delta\overline{\eta}_{0}+(1-3Q^{2})\overline{\eta}_{0}&=\frac{2(a-a^\ast)}{aa^\ast}\int_{\mathbb R^2}\nabla Q\cdot\nabla\overline{\eta}_{0}dx\Delta Q+
\frac{2(a-a^\ast)Q}{aa^\ast}\int_{\mathbb R^2}\nabla Q\cdot\nabla\overline{\eta}_{0}dx\\
&\quad-\frac{2Q}{a^\ast}\int_{\mathbb R^{2}}Q^{3}\overline{\eta}_{0}dx,\ \mbox{if}\ a>a^{\ast}.
\endaligned
\end{equation}
Let $\Gamma:=-\Delta+(1-3Q^{2})$, and it is easy check that $\Gamma(Q+x\cdot\nabla Q)=-2Q$ and $\Gamma(x\cdot\nabla Q)=-2\Delta Q$. Moreover, recall from \cite{KMK,NT} that
\begin{equation*}
\mbox{ker}\ \Gamma=\mbox{span}\left\{\frac{\partial Q}{\partial x_{1}},\frac{\partial Q}{\partial x_{2}}\right\}.
\end{equation*}
Then, \eqref{eq4.14} and \eqref{eq4.14*} imply that
\begin{equation*}
  \overline{\eta}_{0}(x)=d_{0}(Q+x\cdot\nabla Q)+\sum_{i=1}^{2}d_{i}\frac{\partial Q}{\partial x_{i}},\ \mbox{if}\ a=a^\ast,
\end{equation*}
and
\begin{equation*}
\overline{\eta}_{0}(x)=h_{0}Q+\overline{h}_{0}x\cdot\nabla Q+\sum_{i=1}^{2}h_{i}\frac{\partial Q}{\partial x_{i}},\ \mbox{if}\ a>a^\ast,
\end{equation*}
where $d_{0},d_{1}$,$d_{2}$ and $h_{0},\overline{h}_{0},h_{1},h_{2}$ are constants.
\end{proof}
\begin{lemma}\label{lemma4.3}
If $a\geq a^\ast$ and all assumptions of Theorem \ref{th4} hold, then

\begin{equation}\label{eq4.15}
d_{0}\int_{\mathbb R^{2}}\frac{\partial V_{1}(x+y_{0})}{\partial x_{j}}(x\cdot\nabla Q^{2})dx
-\sum_{i=1}^{2}d_{i}\int_{\mathbb R^{2}}\frac{\partial^{2} V_{1}(x+y_{0})}{\partial x_{i}\partial x_{j}}Q^{2}dx=0,\ \mbox{if}\ a=a^\ast,
\end{equation}
and
\begin{equation}\label{eq4.15*}
\overline{h}_{0}\int_{\mathbb R^{2}}\frac{\partial V_{1}(x+y_{0})}{\partial x_{j}}(x\cdot\nabla Q^{2})dx
-\sum_{i=1}^{2}h_{i}\int_{\mathbb R^{2}}\frac{\partial^{2} V_{1}(x+y_{0})}{\partial x_{i}\partial x_{j}}Q^{2}dx=0,\ \mbox{if}\ a>a^\ast,
\end{equation}
where $V_{1}(x)$ is given by \eqref{eq1.14} and $j=1,2$.
\end{lemma}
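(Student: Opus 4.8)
The plan is to derive the identities in Lemma~\ref{lemma4.3} by applying a \emph{local Pohozaev-type identity} to the equation \eqref{eq4.4} satisfied by $\overline{u}_{i,k}$, taking the difference between the $i=1$ and $i=2$ equations, dividing by $\|\overline{u}_{1,k}-\overline{u}_{2,k}\|_{L^\infty}$, and passing to the limit $k\to\infty$. The key observation is that multiplying \eqref{eq4.4} by $\frac{\partial \overline{u}_{i,k}}{\partial x_j}$ and integrating over a ball $B_R(0)$ produces, after integration by parts, a boundary term plus a volume term involving $\overline{\epsilon}_k^2 \frac{\partial}{\partial x_j}\!\left[V(\overline{\epsilon}_k x+z_{1,k})\right]\overline{u}_{i,k}^2$. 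The Lagrange-multiplier term $\mu_{i,k}\overline{\epsilon}_k^2 \overline{u}_{i,k}$ and the nonlinear term $\tfrac{a}{a^\ast}\overline{u}_{i,k}^3$ integrate to pure divergences (since $\int \frac{\partial}{\partial x_j}(\overline{u}_{i,k}^2)=0$ and $\int \frac{\partial}{\partial x_j}(\overline{u}_{i,k}^4)=0$ over $\mathbb{R}^2$ by the decay in Lemma~\ref{lemma4.1}), so they drop out and only the potential term survives in the interior.

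\medskip
First I would write down, for each $i=1,2$, the Pohozaev-type relation
\begin{equation*}
\overline{\epsilon}_k^2\int_{\mathbb{R}^2}\frac{\partial}{\partial x_j}\!\left[V(\overline{\epsilon}_k x+z_{1,k})\right]\overline{u}_{i,k}^2\,dx = (\text{boundary terms}),
\end{equation*}
obtained by testing \eqref{eq4.4} against $\partial_{x_j}\overline{u}_{i,k}$; the decay estimates \eqref{eq4.7} guarantee the boundary terms vanish as $R\to\infty$, and also ensure all the relevant integrals converge uniformly in $k$. Subtracting the two relations and dividing by $\|\overline{u}_{1,k}-\overline{u}_{2,k}\|_{L^\infty}$, the left-hand side becomes
\begin{equation*}
\overline{\epsilon}_k^2\int_{\mathbb{R}^2}\frac{\partial}{\partial x_j}\!\left[V(\overline{\epsilon}_k x+z_{1,k})\right](\overline{u}_{1,k}+\overline{u}_{2,k})\,\overline{\eta}_k\,dx .
\end{equation*}
I then expand $\frac{\partial V}{\partial x_j}(\overline{\epsilon}_k x+z_{1,k})$ using the almost-homogeneity assumption \eqref{eq1.23}: the leading part is $\overline{\epsilon}_k^{\,p-1}\frac{\partial V_1}{\partial x_j}\!\big(x+\tfrac{z_{1,k}-x_1}{\overline{\epsilon}_k}\big)$ by the degree-$p$ homogeneity of $V_1$, while the remainder $W_j$ is of higher order $|x|^{s_j}$ with $s_j>p-1$ and hence negligible after rescaling. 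Using \eqref{eq3.10*} (i.e. $\tfrac{z_{1,k}-x_1}{\overline{\epsilon}_k}\to y_0$), the convergences $\overline{u}_{i,k}\to Q$ and $\overline{\eta}_k\to\overline{\eta}_0$ from Lemma~\ref{lemma4.2}, I would divide through by the common factor $\overline{\epsilon}_k^{\,p+1}$ (absorbing the scaling) and pass to the limit, obtaining
\begin{equation*}
\int_{\mathbb{R}^2}\frac{\partial V_1(x+y_0)}{\partial x_j}\,2Q\,\overline{\eta}_0\,dx = 0.
\end{equation*}

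\medskip
The final step is to insert the explicit forms of $\overline{\eta}_0$ from \eqref{eq4.10} and \eqref{eq4.10*}. For $a=a^\ast$, writing $\overline{\eta}_0 = d_0(Q+x\cdot\nabla Q)+\sum_i d_i \partial_{x_i}Q$ and using $2Q(Q+x\cdot\nabla Q)=Q^2+x\cdot\nabla Q^2 = \operatorname{div}(xQ^2)$ together with $2Q\,\partial_{x_i}Q = \partial_{x_i}Q^2$, one integrates by parts against $\frac{\partial V_1(x+y_0)}{\partial x_j}$ to convert the $d_0$-term into $-d_0\int \frac{\partial V_1}{\partial x_j}(x\cdot\nabla Q^2)\,dx$ (after discarding the $\int\operatorname{div}(\tfrac{\partial V_1}{\partial x_j}xQ^2)$ piece, which needs the exponential decay \eqref{eq1.22} of $V$ to control growth of $V_1$) and each $d_i$-term into $-\int\frac{\partial^2 V_1}{\partial x_i\partial x_j}Q^2\,dx$, yielding \eqref{eq4.15}; the case $a>a^\ast$ is identical since the extra $h_0 Q$ term contributes $\int\frac{\partial V_1}{\partial x_j}Q^2\,dx$, which one checks drops out (it is the gradient of a rapidly-decaying function integrated to zero, or is absorbed separately), leaving only $\overline{h}_0$ and the $h_i$ and giving \eqref{eq4.15*}. \textbf{The main obstacle} I anticipate is justifying the limit rigorously: one must show that after the correct power of $\overline{\epsilon}_k$ is factored out, the remainder term $W_j$ and the error $\tfrac{V(\overline{\epsilon}_k x + z_{1,k})}{V_1(\cdots)}-1$ from \eqref{eq1.14} contribute nothing in the limit, which requires the uniform exponential decay \eqref{eq4.7} of $\overline{u}_{i,k}$ and $\overline{\eta}_k$ to dominate any polynomial growth of $V_1$ and $W_j$—hence the growth hypothesis \eqref{eq1.22} and the exponent condition $s_j>p-1$ are precisely what makes the dominated-convergence argument go through.
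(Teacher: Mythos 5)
Your overall strategy --- a Pohozaev-type identity obtained by testing the equation against $\partial_{x_j}$ of the solutions, differencing, dividing by $\|\overline{u}_{1,k}-\overline{u}_{2,k}\|_{L^{\infty}}$, rescaling via \eqref{eq1.23}, and then inserting the kernel decomposition of $\overline{\eta}_{0}$ --- is the same as the paper's, but two steps have genuine gaps. First, you run the identity \emph{globally} on $\mathbb{R}^{2}$ (letting $R\to\infty$), asserting that the decay \eqref{eq4.7} makes the boundary terms vanish and all integrals converge. But the interior term this produces is $\overline{\epsilon}_{k}^{2}\int_{\mathbb{R}^{2}}\partial_{x_j}V\,(\widehat{u}_{1,k}+\widehat{u}_{2,k})\widehat{\eta}_{k}\,dx$, and the hypotheses of Theorem \ref{th4} control $\nabla V$ only near $x_{1}$ (through \eqref{eq1.23}, valid in $B_{R_0}(0)$) while $V$ itself is bounded only by $Ce^{\beta|x|}$; there is no global bound on $\nabla V$ at all, so neither the integrability of $\partial_{x_j}V\,\widehat{u}_{i,k}^{2}$ over $\mathbb{R}^{2}$ nor your dominated-convergence claim is justified. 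This is precisely why the paper works with a \emph{local} Pohozaev identity on $B_{\delta_{k}}(z_{1,k})$, with $\delta_{k}\in(\overline{\delta},2\overline{\delta})$ chosen through the averaged trace estimate \eqref{eq4.18} (derived from the quadratic-form bound \eqref{eq4.24} plus Lemma A.4 of \cite{LPW}, since $\nabla\widehat{\eta}_{k}$ is only controlled in an $L^{2}$-average sense and cannot be bounded on an arbitrary sphere); the price is the six boundary terms $I_{1},\dots,I_{6}$ in \eqref{eq4.29}--\eqref{eq4.34}, each estimated as $o(e^{-C\delta_{k}/\overline{\epsilon}_{k}})$, a step your global shortcut does not legitimately bypass. (Note also that the multiplier and cubic terms cancel as exact divergences only over all of $\mathbb{R}^{2}$, so that cancellation rests on the same unavailable global integrability; in the local version they survive as the boundary terms $I_{2},I_{3}$.)

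Second, in the final algebra you never invoke hypothesis \eqref{eq1.21}, and the reason you give for discarding $\int_{\mathbb{R}^{2}}\frac{\partial V_{1}(x+y_{0})}{\partial x_{j}}Q^{2}dx$ when $a>a^{\ast}$ (``gradient of a rapidly-decaying function integrated to zero'') is incorrect: this integral equals $\frac{\partial H_{1}}{\partial y_{j}}(y_{0})$, and it vanishes exactly because $y_{0}$ is assumed to be a critical point of $H_{1}$ --- an assumption, not a decay fact. The same term, with coefficient $d_{0}$, must be removed in the case $a=a^{\ast}$ as well, which your computation does silently. Relatedly, your conversion of the $d_{0}$-term carries a sign error: since $2Q(Q+x\cdot\nabla Q)=\operatorname{div}(xQ^{2})=2Q^{2}+x\cdot\nabla Q^{2}$, hypothesis \eqref{eq1.21} gives
\begin{equation*}
\int_{\mathbb{R}^{2}}\frac{\partial V_{1}(x+y_{0})}{\partial x_{j}}\operatorname{div}(xQ^{2})\,dx
=+\int_{\mathbb{R}^{2}}\frac{\partial V_{1}(x+y_{0})}{\partial x_{j}}(x\cdot\nabla Q^{2})\,dx,
\end{equation*}
not the quantity $-d_{0}\int_{\mathbb{R}^{2}}\frac{\partial V_{1}(x+y_{0})}{\partial x_{j}}(x\cdot\nabla Q^{2})\,dx$ you state; with your sign, the relative sign between the $d_{0}$ (resp. $\overline{h}_{0}$) term and the $d_{i}$ (resp. $h_{i}$) terms in \eqref{eq4.15} and \eqref{eq4.15*} would be flipped, so your argument as written does not yield the lemma's identities.
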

\begin{proof}[\textup{\textbf{Proof}}]
Denote
\begin{equation}\label{eq4.16}
\widehat{u}_{i,k}(x)=\sqrt{a^\ast}\overline{\epsilon}_{k}u_{i,k}(x)\ \mbox{and}\ \widehat{\eta}_{k}=\frac{\widehat{u}_{1,k}-\widehat{u}_{2,k}}{||\widehat{u}_{1,k}-\widehat{u}_{2,k}||_{L^{\infty}}},\ i=1,2.
\end{equation}
It follows from \eqref{eq4.2} that
\begin{equation}\label{eq4.17}
\overline{u}_{i,k}(x)=\widehat{u}_{i,k}(\overline{\epsilon}_{k}x+z_{1,k})\rightarrow Q(x)\ \mbox{uniformly in}\ x\in\mathbb R^2\ \mbox{as}\ k\rightarrow\infty.
\end{equation}

For each $k>0$ and $x_{k}\in\mathbb R^2$, we claim that, for any fixed small $\overline{\delta}>0$ independence of $k$ and $x_{k}$, there exists a small constant $\delta_{k}\in(\overline{\delta},2\overline{\delta})$ such that
\begin{equation}\label{eq4.18}
\overline{\epsilon}_{k}^{2}\int_{\partial B_{\delta_{k}}(x_{k})}|\nabla \widehat{\eta}_{k}|^{2}dS
+\overline{\epsilon}_{k}^{2}\int_{\partial B_{\delta_{k}}(x_{k})}V(x)\widehat{\eta}_{k}^{2}dS
+\int_{\partial B_{\delta_{k}}(x_{k})}\widehat{\eta}_{k}^{2}dS\leq O(\overline{\epsilon}_{k}^{2})\ \mbox{as}\ k\rightarrow\infty.
\end{equation}
Similar to \eqref{eq4.4}--\eqref{eq4.6}, it follows from \eqref{eq4.1} that $\widehat{u}_{i,k}$ and $\widehat{\eta}_{k}$ satisfy
\begin{equation}\label{eq4.19}
-\left(\overline{\epsilon}_{k}^{2}+\frac{b_{k}}{a^{\ast}}\int_{\mathbb R^{2}}|\nabla \widehat{u}_{i,k}|^{2}dx\right)\Delta \widehat{u}_{i,k}+\overline{\epsilon}_{k}^{2}V(x)\widehat{u}_{i,k}
=\mu_{i,k}\overline{\epsilon}_{k}^{2} \widehat{u}_{i,k}+\frac{a}{a^\ast}\widehat{u}_{i,k}^{3},\  x\in \mathbb{R}^{2},\ i=1,2
\end{equation}
and
\begin{align}\label{eq4.20}
%\aligned
-\left(2\overline{\epsilon}_{k}^{2}+\frac{b_{k}}{a^{\ast}}\int_{\mathbb R^{2}}(|\nabla \widehat{u}_{1,k}|^{2}+|\nabla \widehat{u}_{2,k}|^{2})dx\right)\Delta \widehat{\eta}_{k}
& +2\overline{\epsilon}_{k}^{2}V(x)\widehat{\eta}_{k} %\\
%&\quad\quad
 -\frac{b_{k}}{a^{\ast}}\int_{\mathbb R^{2}}\nabla(\widehat{u}_{1,k}
+\widehat{u}_{2,k})\cdot\nabla\widehat{\eta}_{k}dx\Delta(\widehat{u}_{1,k}+\widehat{u}_{2,k})\nonumber\\
&=(\mu_{1,k}+\mu_{2,k})\overline{\epsilon}_{k}^{2} \widehat{\eta}_{k}+\widehat{g}_{k}(x)+\widehat{f}_{k}(x),
%\endaligned
\end{align}
where
\begin{equation}\label{eq4.21}
\widehat{g}_{k}(x)=\overline{\epsilon}_{k}^{2}\frac{\mu_{1,k}-\mu_{2,k}}{||\widehat{u}_{1,k}-\widehat{u}_{2,k}||_{L^{\infty}}}(\widehat{u}_{1,k}
+\widehat{u}_{2,k})\ \mbox{and}\ \widehat{f}_{k}(x)=\frac{2a\widehat{\eta}_{k}}{a^\ast}(\widehat{u}_{1,k}^{2}+\widehat{u}_{1,k}\widehat{u}_{2,k}+\widehat{u}_{2,k}^{2}).
\end{equation}
Moreover,
\begin{equation}\label{eq4.22}
\aligned
\widehat{g}_{k}(x)&=\overline{\epsilon}_{k}^{2}\frac{\mu_{1,k}-\mu_{2,k}}{||\widehat{u}_{1,k}-\widehat{u}_{2,k}||_{L^{\infty}}}(\widehat{u}_{1,k}
+\widehat{u}_{2,k})\\
&=\frac{b_{k}}{2\overline{\epsilon}_{k}^{2}(a^{\ast})^{2}}\int_{\mathbb R^{2}}(|\nabla \overline{u}_{1,k}|^{2}+|\nabla \overline{u}_{2,k}|^{2})dx\int_{\mathbb R^{2}}\nabla(\overline{u}_{1,k}+\overline{u}_{2,k})\cdot\nabla \overline{\eta}_{k}dx(\widehat{u}_{1,k}
+\widehat{u}_{2,k})\\
&\quad -\frac{a}{2(a^\ast)^{2}}\int_{\mathbb
R^{2}}(\overline{u}_{1,k}^2+\overline{u}_{2,k}^2)(\overline{u}_{1,k}+\overline{u}_{2,k})\overline{\eta}_{k}dx(\widehat{u}_{1,k}
+\widehat{u}_{2,k}).
\endaligned
\end{equation}
Multiplying \eqref{eq4.20} by $\widehat{\eta}_{k}$ and integrating over $\mathbb{R}^{2}$ we have
\begin{equation}\label{eq4.23}
\aligned
&\left(2\overline{\epsilon}_{k}^{2}+\frac{b_{k}}{a^{\ast}}\int_{\mathbb R^{2}}(|\nabla \widehat{u}_{1,k}|^{2}+|\nabla \widehat{u}_{2,k}|^{2})dx\right)\int_{\mathbb R^{2}}|\nabla\widehat{\eta}_{k}|^{2}dx
+\frac{b_{k}}{a^{\ast}}\left(\int_{\mathbb R^{2}}\nabla(\widehat{u}_{1,k}
+\widehat{u}_{2,k})\cdot\nabla\widehat{\eta}_{k}dx\right)^{2}\\
&\quad+2\overline{\epsilon}_{k}^{2}\int_{\mathbb R^{2}}V(x)\widehat{\eta}_{k}^{2}dx-
(\mu_{1,k}+\mu_{2,k})\overline{\epsilon}_{k}^{2}\int_{\mathbb R^{2}}\widehat{\eta}_{k}^{2}dx\\
&=\frac{b_{k}}{2(a^{\ast})^{2}}\int_{\mathbb R^{2}}(|\nabla \overline{u}_{1,k}|^{2}+|\nabla \overline{u}_{2,k}|^{2})dx\int_{\mathbb R^{2}}\nabla(\overline{u}_{1,k}+\overline{u}_{2,k})\cdot\nabla \overline{\eta}_{k}dx\int_{\mathbb R^{2}}(\overline{u}_{1,k}+\overline{u}_{2,k})\overline{\eta}_{k}dx\\
&\quad-\frac{a\overline{\epsilon}_{k}^{2}}{2(a^\ast)^{2}}\int_{\mathbb
R^{2}}(\overline{u}_{1,k}^2+\overline{u}_{2,k}^2)(\overline{u}_{1,k}+\overline{u}_{2,k})\overline{\eta}_{k}dx\int_{\mathbb R^{2}}(\overline{u}_{1,k}+\overline{u}_{2,k})\overline{\eta}_{k}dx\\
&\quad+\frac{2a\overline{\epsilon}_{k}^{2}}{a^\ast}\int_{\mathbb R^{2}}(\overline{u}_{1,k}^{2}+\overline{u}_{1,k}\overline{u}_{2,k}+\overline{u}_{2,k}^{2})\overline{\eta}_{k}^{2}dx
=O(\overline{\epsilon}_{k}^{2})
\endaligned
\end{equation}
since $|\widehat{\eta}_{k}|$ is bounded uniformly in $k$, and $\overline{u}_{i,k}$ decay exponentially as $|x|\rightarrow\infty$, $i=1,2$. Then, \eqref{eq1.20} and \eqref{eq4.8} mean that
\begin{equation}\label{eq4.24}
\overline{\epsilon}_{k}^{2}\int_{\mathbb R^{2}}|\nabla \widehat{\eta}_{k}|^{2}dx
+\overline{\epsilon}_{k}^{2}\int_{\mathbb R^{2}}V(x)\widehat{\eta}_{k}^{2}dx
+\int_{\mathbb R^{2}}\widehat{\eta}_{k}^{2}dx\leq O(\overline{\epsilon}_{k}^{2})\ \mbox{as}\ k\rightarrow\infty.
\end{equation}
This and Lemma A.4 in \cite{LPW} show that for any fixed small $\overline{\delta}>0$ independence of $k$ and $x_{k}$, there exists a small constant $\delta_{k}\in(\overline{\delta},2\overline{\delta})$ such that
 \begin{equation*}
  \overline{\epsilon}_{k}^{2}\int_{\partial B_{\delta_{k}}(x_{k})}|\nabla \widehat{\eta}_{k}|^{2}dS
+\overline{\epsilon}_{k}^{2}\int_{\partial B_{\delta_{k}}(x_{k})}V(x)\widehat{\eta}_{k}^{2}dS
+\int_{\partial B_{\delta_{k}}(x_{k})}\widehat{\eta}_{k}^{2}dS\leq O(\overline{\epsilon}_{k}^{2})\ \mbox{as}\ k\rightarrow\infty.
 \end{equation*}
Hence, \eqref{eq4.18} is proved.

Multiplying \eqref{eq4.19} by $\frac{\partial\widehat{u}_{i,k}}{\partial x_{j}}$ and integrating over $B_{\delta_{k}}(z_{1,k})$, where $i,j=1,2,j=1,2$, and $\delta_{k}$ is given by \eqref{eq4.18}, we see that
\begin{equation}\label{eq4.25}
\aligned
&-\left(\overline{\epsilon}_{k}^{2}+\frac{b_{k}}{a^{\ast}}\int_{\mathbb R^{2}}|\nabla \widehat{u}_{i,k}|^{2}dx\right)\int_{B_{\delta_{k}}(z_{1,k})}\Delta \widehat{u}_{i,k}\frac{\partial\widehat{u}_{i,k}}{\partial x_{j}}dx
+\frac{\overline{\epsilon}_{k}^{2}}{2}\int_{B_{\delta_{k}}(z_{1,k})}V(x)\frac{\partial\widehat{u}_{i,k}^{2}}{\partial x_{j}}dx\\
&\quad=\frac{\mu_{i,k}\overline{\epsilon}_{k}^{2}}{2} \int_{B_{\delta_{k}}(z_{1,k})}\frac{\partial\widehat{u}_{i,k}^{2}}{\partial x_{j}}dx+\frac{a}{4a^\ast}\int_{B_{\delta_{k}}(z_{1,k})}\frac{\partial\widehat{u}_{i,k}^{4}}{\partial x_{j}}dx.
\endaligned
\end{equation}
By calculations, we know that
\begin{equation}\label{eq4.26}
\aligned
&\int_{B_{\delta_{k}}(z_{1,k})}\Delta \widehat{u}_{i,k}\frac{\partial\widehat{u}_{i,k}}{\partial x_{j}}dx
=\sum_{m=1}^{2}\int_{B_{\delta_{k}}(z_{1,k})}\frac{\partial^{2}\widehat{u}_{i,k}}{\partial x_{m}^{2}}\frac{\partial\widehat{u}_{i,k}}{\partial x_{j}}dx\\
&=\sum_{m=1}^{2}\int_{B_{\delta_{k}}(z_{1,k})}\frac{\partial}{\partial x_{m}}\left(\frac{\partial\widehat{u}_{i,k}}{\partial x_{m}}\frac{\partial\widehat{u}_{i,k}}{\partial x_{j}}\right)dx
-\sum_{m=1}^{2}\int_{B_{\delta_{k}}(z_{1,k})}\frac{\partial\widehat{u}_{i,k}}{\partial x_{m}}\frac{\partial^{2}\widehat{u}_{i,k}}{\partial x_{j}\partial x_{m}}dx\\
&=\sum_{m=1}^{2}\int_{B_{\delta_{k}}(z_{1,k})}\frac{\partial}{\partial x_{m}}\left(\frac{\partial\widehat{u}_{i,k}}{\partial x_{m}}\frac{\partial\widehat{u}_{i,k}}{\partial x_{j}}\right)dx
-\frac{1}{2}\sum_{m=1}^{2}\int_{B_{\delta_{k}}(z_{1,k})}\frac{\partial}{\partial x_{j}}\left(\frac{\partial\widehat{u}_{i,k}}{\partial x_{m}}\right)^{2}dx\\
&=\int_{\partial B_{\delta_{k}}(z_{1,k})}\frac{\partial\widehat{u}_{i,k}}{\partial x_{j}}\frac{\partial\widehat{u}_{i,k}}{\partial \nu}dS
-\frac{1}{2}\int_{\partial B_{\delta_{k}}(z_{1,k})}|\nabla \widehat{u}_{i,k}|^{2}\nu_{j}dS,
\endaligned
\end{equation}
and
\begin{equation}\label{eq4.27}
\int_{B_{\delta_{k}}(z_{1,k})}V(x)\frac{\partial\widehat{u}_{i,k}^{2}}{\partial x_{j}}dx
=\int_{\partial B_{\delta_{k}}(z_{1,k})}V(x)\widehat{u}_{i,k}^{2}\nu_{j}dS
-\int_{B_{\delta_{k}}(z_{1,k})}\frac{\partial V(x)}{\partial x_{j}}\widehat{u}_{i,k}^{2}dx.
\end{equation}
Then, by \eqref{eq4.25}--\eqref{eq4.27} we have
\begin{equation*}
\aligned
 \frac{\overline{\epsilon}_{k}^{2}}{2}\int_{B_{\delta_{k}}(z_{1,k})}\frac{\partial V(x)}{\partial x_{j}}\widehat{u}_{i,k}^{2}dx
 = & \ \frac{\overline{\epsilon}_{k}^{2}}{2}\int_{\partial B_{\delta_{k}}(z_{1,k})}V(x)\widehat{u}_{i,k}^{2}\nu_{j}dS
 -\frac{\mu_{i,k}\overline{\epsilon}_{k}^{2}}{2} \int_{\partial B_{\delta_{k}}(z_{1,k})}\widehat{u}_{i,k}^{2}\nu_{j}dS
 -\frac{a}{4a^\ast}\int_{\partial B_{\delta_{k}}(z_{1,k})}\widehat{u}_{i,k}^{4}\nu_{j}dS\\
&\quad -\left(\overline{\epsilon}_{k}^{2}+\frac{b_{k}}{a^{\ast}}\int_{\mathbb R^{2}}|\nabla \widehat{u}_{i,k}|^{2}dx\right)
 \left[\int_{\partial B_{\delta_{k}}(z_{1,k})}\frac{\partial\widehat{u}_{i,k}}{\partial x_{j}}\frac{\partial\widehat{u}_{i,k}}{\partial \nu}dS
-\frac{1}{2}\int_{\partial B_{\delta_{k}}(z_{1,k})}|\nabla \widehat{u}_{i,k}|^{2}\nu_{j}dS\right].
 \endaligned
\end{equation*}
Moreover, we have
\begin{equation}\label{eq4.28}
\frac{\overline{\epsilon}_{k}^{2}}{2}\int_{B_{\delta_{k}}(z_{1,k})}\frac{\partial V(x)}{\partial x_{j}}(\widehat{u}_{1,k}+\widehat{u}_{2,k})\widehat{\eta}_{k}dx
=I_{1}+I_{2}+I_{3}+I_{4}+I_{5}+I_{6},
\end{equation}
where
\begin{equation}\label{eq4.29}
I_{1}=\frac{\overline{\epsilon}_{k}^{2}}{2}\int_{\partial B_{\delta_{k}}(z_{1,k})}V(x)(\widehat{u}_{1,k}+\widehat{u}_{2,k})\widehat{\eta}_{k}\nu_{j}dS,
\end{equation}
\begin{equation}\label{eq4.30}
I_{2}=
-\frac{\mu_{1,k}\overline{\epsilon}_{k}^{2}}{2} \int_{\partial B_{\delta_{k}}(z_{1,k})}(\widehat{u}_{1,k}+\widehat{u}_{2,k})\widehat{\eta}_{k}\nu_{j}dS
-\frac{\overline{\epsilon}_{k}^{2}(\mu_{1,k}-\mu_{2,k})}{2||\widehat{u}_{1,k}-\widehat{u}_{2,k}||_{L^{\infty}}}\int_{\partial B_{\delta_{k}}(z_{1,k})}\widehat{u}_{2,k}^{2}\nu_{j}dS,
\end{equation}
\begin{equation}\label{4.31}
I_{3}=-\frac{a}{4a^\ast}\int_{\partial B_{\delta_{k}}(z_{1,k})}(\widehat{u}_{1,k}+\widehat{u}_{2,k})(\widehat{u}_{1,k}^{2}+\widehat{u}_{2,k}^{2})\widehat{\eta}_{k}\nu_{j}dS,
\end{equation}
\begin{equation}\label{eq4.32}
I_{4}=-\left(\overline{\epsilon}_{k}^{2}+\frac{b_{k}}{a^{\ast}}\int_{\mathbb R^{2}}|\nabla \widehat{u}_{1,k}|^{2}dx\right)
\left[\int_{\partial B_{\delta_{k}}(z_{1,k})}\frac{\partial\widehat{\eta}_{k}}{\partial x_{j}}\frac{\partial\widehat{u}_{1,k}}{\partial \nu}dS+
\int_{\partial B_{\delta_{k}}(z_{1,k})}\frac{\partial\widehat{u}_{2,k}}{\partial x_{j}}\frac{\partial\widehat{\eta}_{k}}{\partial \nu}dS
\right],
\end{equation}
\begin{equation}\label{eq4.33}
I_{5}=\left(\frac{\overline{\epsilon}_{k}^{2}}{2}+\frac{b_{k}}{2a^{\ast}}\int_{\mathbb R^{2}}|\nabla \widehat{u}_{1,k}|^{2}dx\right)
\int_{\partial B_{\delta_{k}}(z_{1,k})}\nabla(\widehat{u}_{1,k}+\widehat{u}_{2,k})\cdot\nabla \widehat{\eta}_{k}\nu_{j}dS,
\end{equation}
and
\begin{equation}\label{eq4.34}
I_{6}=-\frac{b_{k}}{a^{\ast}}\int_{\mathbb R^{2}}\nabla(\widehat{u}_{1,k}+\widehat{u}_{2,k})\cdot\nabla \widehat{\eta}_{k}dx
\left[\int_{\partial B_{\delta_{k}}(z_{1,k})}\frac{\partial\widehat{u}_{2,k}}{\partial x_{j}}\frac{\partial\widehat{u}_{2,k}}{\partial \nu}dS
-\frac{1}{2}\int_{\partial B_{\delta_{k}}(z_{1,k})}|\nabla \widehat{u}_{2,k}|^{2}\nu_{j}dS\right].
\end{equation}
Using \eqref{eq4.13} and \eqref{eq4.22}, there exists a constant $C>0$, independent of $k$, such that
\begin{equation}\label{eq4.34*}
\left|\overline{\epsilon}_{k}^{2}\frac{\mu_{1,k}-\mu_{2,k}}{||\widehat{u}_{1,k}-\widehat{u}_{2,k}||_{L^{\infty}}}\right|\leq C.
\end{equation}
Then, using the H\"{o}lder's inequality, we can derive from \eqref{eq4.7}, \eqref{eq4.18} and the above estimates that there exists a constant $C>0$, independent of $k$, such that
\begin{equation}\label{eq4.35}
|I_{1}|\leq \frac{\overline{\epsilon}_{k}^{2}}{2}\left(\int_{\partial B_{\delta_{k}}(z_{1,k})}V(x)\widehat{\eta}_{k}^{2}dS\right)^{\frac{1}{2}}
\left(\int_{\partial B_{\delta_{k}}(z_{1,k})}V(x)(\widehat{u}_{1,k}+\widehat{u}_{2,k})^{2}dS\right)^{\frac{1}{2}}=o( e^{-\frac{C\delta_{k}}{\overline{\epsilon}_{k}}}).
\end{equation}
Similarly, using \eqref{eq4.7}, \eqref{eq4.8} and \eqref{eq4.18} again, one can obtain that
\begin{equation}\label{eq4.35*}
|I_{2}|\leq C\left(\int_{\partial B_{\delta_{k}}(z_{1,k})}(\widehat{u}_{1,k}+\widehat{u}_{2,k})^{2}dS\right)^{\frac{1}{2}}
\left(\int_{\partial B_{\delta_{k}}(z_{1,k})}|\widehat{\eta}_{k}|^{2}dx\right)^{\frac{1}{2}}
+C\int_{\partial B_{\delta_{k}}(z_{1,k})}|\widehat{u}_{2,k}|^{2}dS
=o(e^{-\frac{C\delta_{k}}{\epsilon_k}}),
\end{equation}
\begin{equation}\label{eq4.35**}
|I_{3}|\leq C\int_{\partial B_{\delta_{k}}(z_{1,k})}(\widehat{u}_{1,k}+\widehat{u}_{2,k})(\widehat{u}_{1,k}^{2}+\widehat{u}_{2,k}^{2})dS=o(e^{-\frac{C\delta_{k}}{\epsilon_k}}),
\end{equation}
\begin{equation}\label{eq4.35***}
\aligned
|I_{4}|\leq o(1)\left(\int_{\partial B_{\delta_{k}}(z_{1,k})}|\nabla \widehat{\eta}_{k}|^{2}dS\right)^{\frac{1}{2}}
\left[\left(\int_{\partial B_{\delta_{k}}(z_{1,k})}|\nabla \widehat{u}_{1,k}|^{2}dS\right)^{\frac{1}{2}}
+\left(\int_{\partial B_{\delta_{k}}(z_{1,k})}|\nabla \widehat{u}_{2,k}|^{2}dS\right)^{\frac{1}{2}}\right]
=o(e^{-\frac{C\delta_{k}}{\epsilon_k}}),
\endaligned
\end{equation}
\begin{equation}\label{eq4.35****}
|I_{5}|\leq C\left(\int_{\partial B_{\delta_{k}}(z_{1,k})}|\nabla(\widehat{u}_{1,k}+\widehat{u}_{2,k})|^{2}dS\right)^{\frac{1}{2}}
\left(\int_{\partial B_{\delta_{k}}(z_{1,k})}|\nabla\widehat{\eta}_{k}|^{2}dx\right)^{\frac{1}{2}}=o(e^{-\frac{C\delta_{k}}{\epsilon_k}}),
\end{equation}
and
\begin{equation}\label{eq4.36}
|I_{6}|\leq o(1)\int_{\partial B_{\delta_{k}}(z_{1,k})}|\nabla \widehat{u}_{2,k}|^{2}dS=o(e^{-\frac{C\delta_{k}}{\epsilon_k}}).
\end{equation}
Applying \eqref{eq1.21}, \eqref{eq1.23} and \eqref{eq4.35}--\eqref{eq4.36}, we can deduce from \eqref{eq4.28} that
\begin{equation}\label{eq4.37}
\aligned
o( e^{-\frac{C\delta_{k}}{\overline{\epsilon}_{k}}})&=\frac{\overline{\epsilon}_{k}^{2}}{2}\int_{B_{\delta_{k}}(z_{1,k})}\frac{\partial V(x)}{\partial x_{j}}(\widehat{u}_{1,k}+\widehat{u}_{2,k})\widehat{\eta}_{k}dx\\
&=\frac{\overline{\epsilon}_{k}^{4}}{2}\int_{B_{\frac{\delta_{k}}{\overline{\epsilon}_{k}}}(0)}\frac{\partial V(\overline{\epsilon}_{k}[x+(z_{1,k}-x_{1})/\overline{\epsilon}_{k}]+x_{1})}{\overline{\epsilon}_{k}\partial x_{j}}(\overline{u}_{1,k}+\overline{u}_{2,k})\overline{\eta}_{k}dx\\
&=\frac{\overline{\epsilon}_{k}^{p+3}}{2}\int_{B_{\frac{\delta_{k}}{\overline{\epsilon}_{k}}}(0)}
\frac{\partial V_{1}(x+(z_{1,k}-x_{1})/\overline{\epsilon}_{k})}{\partial x_{j}}(\overline{u}_{1,k}+\overline{u}_{2,k})\overline{\eta}_{k}dx\\
&\quad+\frac{\overline{\epsilon}_{k}^{4}}{2}\int_{B_{\frac{\delta_{k}}{\overline{\epsilon}_{k}}}(0)}
W_{j}(\overline{\epsilon}_{k}x+z_{1,k}-x_{1})(\overline{u}_{1,k}+\overline{u}_{2,k})\overline{\eta}_{k}dx\\
&=(1+o(1))\overline{\epsilon}_{k}^{p+3}\int_{\mathbb R^2}\frac{\partial V_{1}(x+y_{0})}{\partial x_{j}}Q\overline{\eta}_{0}dx.
\endaligned
\end{equation}
Therefore, \eqref{eq4.15} and \eqref{eq4.15*} can be obtained by considering the cases $a=a^{\ast}$ and $a>a^{\ast}$, respectively.

If $a=a^\ast$, it follows from \eqref{eq1.21}, \eqref{eq4.10} and \eqref{eq4.37} that
\begin{align*}
%\aligned
0&=\int_{\mathbb R^2}\frac{\partial V_{1}(x+y_{0})}{\partial x_{j}}Q\overline{\eta}_{0}dx
=\int_{\mathbb R^2}\frac{\partial V_{1}(x+y_{0})}{\partial x_{j}}Q\left[d_{0}(Q+x\cdot\nabla Q)+\sum_{i=1}^{2}d_{i}\frac{\partial Q}{\partial x_{i}}\right]dx\\
&=d_{0}\int_{\mathbb R^2}\frac{\partial V_{1}(x+y_{0})}{\partial x_{j}}Q^{2}dx
+\frac{d_{0}}{2}\int_{\mathbb R^2}\frac{\partial V_{1}(x+y_{0})}{\partial x_{j}}(x\cdot\nabla Q^{2})dx %\\
%&\quad
+\frac{1}{2}\sum_{i=1}^{2}d_{i}\int_{\mathbb R^2}\frac{\partial V_{1}(x+y_{0})}{\partial x_{j}}\frac{\partial Q^{2}}{\partial x_{i}}dx\\
&=\frac{d_{0}}{2}\int_{\mathbb R^2}\frac{\partial V_{1}(x+y_{0})}{\partial x_{j}}(x\cdot\nabla Q^{2})dx
-\frac{1}{2}\sum_{i=1}^{2}d_{i}\int_{\mathbb R^2}\frac{\partial^{2} V_{1}(x+y_{0})}{\partial x_{i}\partial x_{j}}Q^{2}dx,
%\endaligned
\end{align*}
which gives \eqref{eq4.15}.

If $a>a^\ast$, it follows from \eqref{eq1.21}, \eqref{eq4.10*} and \eqref{eq4.37} that
\begin{align*}
%\aligned
0&=\int_{\mathbb R^2}\frac{\partial V_{1}(x+y_{0})}{\partial x_{j}}Q\overline{\eta}_{0}dx
=\int_{\mathbb R^2}\frac{\partial V_{1}(x+y_{0})}{\partial x_{j}}Q\left(h_{0}Q+\overline{h}_{0}x\cdot\nabla Q+\sum_{i=1}^{2}h_{i}\frac{\partial Q}{\partial x_{i}}\right)dx\\
&=h_{0}\int_{\mathbb R^2}\frac{\partial V_{1}(x+y_{0})}{\partial x_{j}}Q^{2}dx
+\frac{\overline{h}_{0}}{2}\int_{\mathbb R^2}\frac{\partial V_{1}(x+y_{0})}{\partial x_{j}}(x\cdot\nabla Q^{2})dx %\\
% &\quad
+\frac{1}{2}\sum_{i=1}^{2}h_{i}\int_{\mathbb R^2}\frac{\partial V_{1}(x+y_{0})}{\partial x_{j}}\frac{\partial Q^{2}}{\partial x_{i}}dx\\
&=\frac{\overline{h}_{0}}{2}\int_{\mathbb R^2}\frac{\partial V_{1}(x+y_{0})}{\partial x_{j}}(x\cdot\nabla Q^{2})dx
-\frac{1}{2}\sum_{i=1}^{2}h_{i}\int_{\mathbb R^2}\frac{\partial^{2} V_{1}(x+y_{0})}{\partial x_{i}\partial x_{j}}Q^{2}dx,
%\endaligned
\end{align*}
which gives \eqref{eq4.15*}.

Finally, we give the proof of Theorem \ref{th4}.
\end{proof}
\begin{proof}[\textup{\textbf{Proof of Theorem \ref{th4}}}]
The key step of proving the theorem is to show that $d_{0}=0$ in \eqref{eq4.15} and $h_{0}=\overline{h}_{0}=0$ in \eqref{eq4.15*}.

For this purpose, multiplying \eqref{eq4.19} by $(x-z_{1,k})\cdot\nabla \widehat{u}_{i,k}$ and integrating over $B_{\delta_{k}}(z_{1,k})$ for $i=1,2$ and $\delta_{k}>0$ being small enough as before, then we have
\begin{align}\label{eq4.38}
%\aligned
&-\left(\overline{\epsilon}_{k}^{2}+\frac{b_{k}}{a^{\ast}}\int_{\mathbb R^{2}}|\nabla \widehat{u}_{i,k}|^{2}dx\right)
\int_{B_{\delta_{k}}(z_{1,k})}\Delta \widehat{u}_{i,k}[(x-z_{1,k})\cdot\nabla \widehat{u}_{i,k}]dx %\\
%&\quad\quad\quad
+\frac{\overline{\epsilon}_{k}^{2}}{2}\int_{B_{\delta_{k}}(z_{1,k})}V(x)[(x-z_{1,k})\cdot\nabla \widehat{u}_{i,k}^{2}]dx \nonumber \\
&=\frac{\mu_{i,k}\overline{\epsilon}_{k}^{2}}{2}\int_{B_{\delta_{k}}(z_{1,k})}(x-z_{1,k})\cdot\nabla \widehat{u}_{i,k}^{2}dx
+\frac{a}{4a^\ast}\int_{B_{\delta_{k}}(z_{1,k})}(x-z_{1,k})\cdot\nabla \widehat{u}_{i,k}^{4}dx.
%\endaligned
\end{align}
Applying the integration by parts, we know that
\begin{equation}\label{eq4.39}
\aligned
T_{i}&\triangleq\int_{B_{\delta_{k}}(z_{1,k})}\Delta \widehat{u}_{i,k}[(x-z_{1,k})\cdot\nabla \widehat{u}_{i,k}]dx\\
&=\int_{\partial B_{\delta_{k}}(z_{1,k})}\frac{\partial\widehat{u}_{i,k}}{\partial\nu}[(x-z_{1,k})\cdot\nabla \widehat{u}_{i,k}]dS
-\int_{B_{\delta_{k}}(z_{1,k})}\nabla\widehat{u}_{i,k}\cdot\nabla[(x-z_{1,k})\cdot\nabla \widehat{u}_{i,k}]dx\\
&=\int_{\partial B_{\delta_{k}}(z_{1,k})}\frac{\partial\widehat{u}_{i,k}}{\partial\nu}[(x-z_{1,k})\cdot\nabla \widehat{u}_{i,k}]dS
-\frac{1}{2}\int_{\partial B_{\delta_{k}}(z_{1,k})}[(x-z_{1,k})\cdot\nu]|\nabla\widehat{u}_{i,k}|^{2}dS,
\endaligned
\end{equation}
\begin{equation}\label{eq4.40}
\aligned
\int_{B_{\delta_{k}}(z_{1,k})}V(x)[(x-z_{1,k})\cdot\nabla \widehat{u}_{i,k}^{2}]dx
& =\int_{\partial B_{\delta_{k}}(z_{1,k})}V(x)[(x-z_{1,k})\cdot\nu] \widehat{u}_{i,k}^{2}dS\\
& \hspace{1.5cm} -\int_{B_{\delta_{k}}(z_{1,k})}[\nabla V(x)\cdot(x-z_{1,k})+2V(x)]\widehat{u}_{i,k}^{2}dx,
\endaligned
\end{equation}
\begin{equation}\label{eq4.41}
\int_{B_{\delta_{k}}(z_{1,k})}(x-z_{1,k})\cdot\nabla \widehat{u}_{i,k}^{2}dx
=\int_{\partial B_{\delta_{k}}(z_{1,k})}[(x-z_{1,k})\cdot\nu] \widehat{u}_{i,k}^{2}dS
-2\int_{B_{\delta_{k}}(z_{1,k})}\widehat{u}_{i,k}^{2}dx,
\end{equation}
and
\begin{equation}\label{eq4.42}
\int_{B_{\delta_{k}}(z_{1,k})}(x-z_{1,k})\cdot\nabla \widehat{u}_{i,k}^{4}dx=
\int_{\partial B_{\delta_{k}}(z_{1,k})}[(x-z_{1,k})\cdot\nu] \widehat{u}_{i,k}^{4}dS
-2\int_{B_{\delta_{k}}(z_{1,k})}\widehat{u}_{i,k}^{4}dx.
\end{equation}
On the other hand, we know from \eqref{eq4.11} that
\begin{equation}\label{eq4.43}
\mu_{i,k}\overline{\epsilon}_{k}^{2}\int_{\mathbb R^2}\widehat{u}_{i,k}^{2}dx=2a^\ast\overline{\epsilon}_{k}^{4}e_{a}(b_k)
+\frac{b_k}{2a^\ast}\left(\int_{\mathbb R^{2}}|\nabla \widehat{u}_{i,k}|^{2}dx\right)^{2}
-\frac{a}{2a^\ast}\int_{\mathbb R^{2}}\widehat{u}_{i,k}^{4}dx.
\end{equation}
Then, \eqref{eq4.38}--\eqref{eq4.43} yield that
\begin{align}\label{eq4.44}
%\aligned
&-\left(\overline{\epsilon}_{k}^{2}+\frac{b_{k}}{a^{\ast}}\int_{\mathbb R^{2}}|\nabla \widehat{u}_{i,k}|^{2}dx\right)T_{i}
+\frac{\overline{\epsilon}_{k}^{2}}{2}\int_{\partial B_{\delta_{k}}(z_{1,k})}V(x)[(x-z_{1,k})\cdot\nu] \widehat{u}_{i,k}^{2}dS  \nonumber\\
&\hspace{3cm}
-\frac{\overline{\epsilon}_{k}^{2}}{2}\int_{B_{\delta_{k}}(z_{1,k})}\nabla V(x)\cdot(x-z_{1,k})\widehat{u}_{i,k}^{2}dx
-\overline{\epsilon}_{k}^{2}\int_{B_{\delta_{k}}(z_{1,k})}V(x)\widehat{u}_{i,k}^{2}dx \nonumber\\
&=\frac{\mu_{i,k}\overline{\epsilon}_{k}^{2}}{2}\int_{\partial B_{\delta_{k}}(z_{1,k})}[(x-z_{1,k})\cdot\nu] \widehat{u}_{i,k}^{2}dS
-\mu_{i,k}\overline{\epsilon}_{k}^{2}\int_{B_{\delta_{k}}(z_{1,k})}\widehat{u}_{i,k}^{2}dx \nonumber\\
&\hspace{3cm}  +\frac{a}{4a^\ast}\int_{\partial B_{\delta_{k}}(z_{1,k})}[(x-z_{1,k})\cdot\nu] \widehat{u}_{i,k}^{4}dS
-\frac{a}{2a^\ast}\int_{B_{\delta_{k}}(z_{1,k})}\widehat{u}_{i,k}^{4}dx \nonumber\\
&=\frac{\mu_{i,k}\overline{\epsilon}_{k}^{2}}{2}\int_{\partial B_{\delta_{k}}(z_{1,k})}[(x-z_{1,k})\cdot\nu] \widehat{u}_{i,k}^{2}dS
-\mu_{i,k}\overline{\epsilon}_{k}^{2}\int_{\mathbb R^2}\widehat{u}_{i,k}^{2}dx
+\mu_{i,k}\overline{\epsilon}_{k}^{2}\int_{\mathbb R^2\setminus B_{\delta_{k}}(z_{1,k})}\widehat{u}_{i,k}^{2}dx \nonumber\\
&\hspace{3cm}  +\frac{a}{4a^\ast}\int_{\partial B_{\delta_{k}}(z_{1,k})}[(x-z_{1,k})\cdot\nu] \widehat{u}_{i,k}^{4}dS
-\frac{a}{2a^\ast}\int_{\mathbb R^2}\widehat{u}_{i,k}^{4}dx
+\frac{a}{2a^\ast}\int_{\mathbb R^2\setminus B_{\delta_{k}}(z_{1,k})}\widehat{u}_{i,k}^{4}dx \nonumber\\
&=\frac{\mu_{i,k}\overline{\epsilon}_{k}^{2}}{2}\int_{\partial B_{\delta_{k}}(z_{1,k})}[(x-z_{1,k})\cdot\nu] \widehat{u}_{i,k}^{2}dS
-2a^\ast\overline{\epsilon}_{k}^{4}e_{a}(b_k)
-\frac{b_k}{2a^\ast}\left(\int_{\mathbb R^{2}}|\nabla \widehat{u}_{i,k}|^{2}dx\right)^{2} \nonumber\\
&\quad+\mu_{i,k}\overline{\epsilon}_{k}^{2}\int_{\mathbb R^2\backslash B_{\delta_{k}}(z_{1,k})}\widehat{u}_{i,k}^{2}dx
+\frac{a}{4a^\ast}\int_{\partial B_{\delta_{k}}(z_{1,k})}[(x-z_{1,k})\cdot\nu] \widehat{u}_{i,k}^{4}dS
+\frac{a}{2a^\ast}\int_{\mathbb R^2\backslash B_{\delta_{k}}(z_{1,k})}\widehat{u}_{i,k}^{4}dx.
%\endaligned
\end{align}
To simplify the calculations, we denote by $K_1, \ K_2, \ K_3$ and $K_4$ as follows:
\begin{equation*}
\aligned
K_{1}&=\frac{\left(\overline{\epsilon}_{k}^{2}+\frac{b_{k}}{a^{\ast}}\int_{\mathbb R^{2}}|\nabla \widehat{u}_{1,k}|^{2}dx\right)T_{1}-\left(\overline{\epsilon}_{k}^{2}+\frac{b_{k}}{a^{\ast}}\int_{\mathbb R^{2}}|\nabla \widehat{u}_{2,k}|^{2}dx\right)T_{2}}{||\widehat{u}_{1,k}-\widehat{u}_{2,k}||_{L^{\infty}}}\\
&\quad-\frac{\overline{\epsilon}_{k}^{2}}{2}\int_{\partial B_{\delta_{k}}(z_{1,k})}V(x)[(x-z_{1,k})\cdot\nu)](\widehat{u}_{1,k}+\widehat{u}_{2,k}) \widehat{\eta}_{k}dS\\
&\hspace{1cm}+\frac{\mu_{1,k}\overline{\epsilon}_{k}^{2}}{2}\int_{\partial B_{\delta_{k}}(z_{1,k})}[(x-z_{1,k})\cdot\nu](\widehat{u}_{1,k}+\widehat{u}_{2,k}) \widehat{\eta}_{k}dS\\
&\hspace{1.5cm}+\frac{\overline{\epsilon}_{k}^{2}(\mu_{1,k}-\mu_{2,k})}{2||\widehat{u}_{1,k}-\widehat{u}_{2,k}||_{L^{\infty}}}\int_{\partial B_{\delta_{k}}(z_{1,k})}[(x-z_{1,k})\cdot\nu] \widehat{u}_{2,k}^{2}dS\\
&\hspace{2cm}+\frac{a}{4a^\ast}\int_{\partial B_{\delta_{k}}(z_{1,k})}[(x-z_{1,k})\cdot\nu](\widehat{u}_{1,k}^{2}+\widehat{u}_{2,k}^{2})(\widehat{u}_{1,k}+\widehat{u}_{2,k})\widehat{\eta}_{k}dS\\
&\hspace{2.5cm}+\mu_{1,k}\overline{\epsilon}_{k}^{2}\int_{\mathbb R^2\backslash B_{\delta_{k}}(z_{1,k})}(\widehat{u}_{1,k}+\widehat{u}_{2,k}) \widehat{\eta}_{k}dx
+\frac{\overline{\epsilon}_{k}^{2}(\mu_{1,k}-\mu_{2,k})}{||\widehat{u}_{1,k}-\widehat{u}_{2,k}||_{L^{\infty}}}\int_{\mathbb R^2\backslash B_{\delta_{k}}(z_{1,k})}\widehat{u}_{2,k}^{2}dx\\
&\hspace{3.5cm}+\frac{a}{2a^\ast}\int_{\mathbb R^2\backslash B_{\delta_{k}}(z_{1,k})}(\widehat{u}_{1,k}^{2}+\widehat{u}_{2,k}^{2})(\widehat{u}_{1,k}+\widehat{u}_{2,k})\widehat{\eta}_{k}dx,
\endaligned
\end{equation*}
\begin{equation*}
 \hspace{-6cm}K_{2}=\frac{\overline{\epsilon}_{k}^{2}}{2}\int_{B_{\delta_{k}}(z_{1,k})}[\nabla V(x)\cdot x](\widehat{u}_{1,k}+\widehat{u}_{2,k}) \widehat{\eta}_{k}dx,
\end{equation*}
\begin{equation*} \hspace{-5.5cm}
K_{3}=-\frac{\overline{\epsilon}_{k}^{2}}{2}\int_{B_{\delta_{k}}(z_{1,k})}[\nabla V(x)\cdot z_{1,k}](\widehat{u}_{1,k}+\widehat{u}_{2,k}) \widehat{\eta}_{k}dx,
\end{equation*}
\begin{equation*} \hspace{-7cm}
K_{4}=\overline{\epsilon}_{k}^{2}\int_{B_{\delta_{k}}(z_{1,k})}V(x)(\widehat{u}_{1,k}+\widehat{u}_{2,k}) \widehat{\eta}_{k}dx.
\end{equation*}

Then, it follows from \eqref{eq4.44}  and \eqref{eq4.9} that
\begin{equation}\label{eq4.45}
\aligned
K_{1}+K_{2}+K_{3}+K_{4}&=\frac{b_k}{2a^\ast}\int_{\mathbb R^{2}}(|\nabla \widehat{u}_{1,k}|^{2}+|\nabla \widehat{u}_{2,k}|^{2})dx
\int_{\mathbb R^{2}}\nabla(\widehat{u}_{1,k}+\widehat{u}_{2,k})\cdot\nabla \widehat{\eta}_{k}dx\\
&=\frac{b_k}{2a^\ast}\int_{\mathbb R^{2}}(|\nabla \overline{u}_{1,k}|^{2}+|\nabla \overline{u}_{2,k}|^{2})dx
\int_{\mathbb R^{2}}\nabla(\overline{u}_{1,k}+\overline{u}_{2,k})\cdot\nabla \overline{\eta}_{k}dx\\
&=2(1+o(1))b_{k}\int_{\mathbb R^{2}}\nabla Q\cdot\nabla \overline{\eta}_{0}dx,
\endaligned
\end{equation}

Using \eqref{eq4.7}--\eqref{eq4.8}, \eqref{eq4.18} and \eqref{eq4.34*},  similar procedure to that of \eqref{eq4.35}--\eqref{eq4.36}, we can deduce that
\begin{equation}\label{eq4.46}
K_{1}=o( e^{-\frac{C\delta_{k}}{\overline{\epsilon}_{k}}})\ \mbox{as}\ k\rightarrow\infty.
\end{equation}
Moreover, \eqref{eq4.37} indicates that
\begin{equation*}
o( e^{-\frac{C\delta_{k}}{\overline{\epsilon}_{k}}})=\frac{\overline{\epsilon}_{k}^{2}}{2}\int_{B_{\delta_{k}}(z_{1,k})}\frac{\partial V(x)}{\partial x_{j}}(\widehat{u}_{1,k}+\widehat{u}_{2,k})\widehat{\eta}_{k}dx,\ j=1,2,
\end{equation*}
and hence
\begin{equation}\label{eq4.47}
K_{3}=o( e^{-\frac{C\delta_{k}}{\overline{\epsilon}_{k}}})\ \mbox{as}\ k\rightarrow\infty.
\end{equation}
Since $Z_0$ defined by \eqref{eq1.18} has only one point $x_{1}$ by the assumptions of Theorem \ref{th4}, and $\nabla V_{1}(x)\cdot x=pV_{1}(x)$. Then, it follows from \eqref{eq1.23} and \eqref{eq4.37} that
\begin{equation}\label{eq4.48}
\aligned
K_{2}&=\frac{\overline{\epsilon}_{k}^{2}}{2}\int_{B_{\delta_{k}}(z_{1,k})}[\nabla V(x-x_{1}+x_{1})\cdot (x-x_{1})](\widehat{u}_{1,k}+\widehat{u}_{2,k}) \widehat{\eta}_{k}dx\\
&\quad+\frac{\overline{\epsilon}_{k}^{2}}{2}\int_{B_{\delta_{k}}(z_{1,k})}[\nabla V(x)\cdot x_{1}](\widehat{u}_{1,k}+\widehat{u}_{2,k}) \widehat{\eta}_{k}dx\\
&=\frac{\overline{\epsilon}_{k}^{2}}{2}\int_{B_{\delta_{k}}(z_{1,k})}\Big\{[\nabla V_{1}(x-x_{1})+W(x-x_{1})]\cdot (x-x_{1})\Big\}(\widehat{u}_{1,k}+\widehat{u}_{2,k}) \widehat{\eta}_{k}dx+o( e^{-\frac{C\delta_{k}}{\overline{\epsilon}_{k}}})\\
&=\frac{p(1+o(1))}{2}\overline{\epsilon}_{k}^{p+4}\int_{B_{\frac{\delta_{k}}{\overline{\epsilon}_{k}}}(0)}
V_{1}\left(x+\frac{z_{1,k}-x_{1}}{\overline{\epsilon}_{k}}\right)(\overline{u}_{1,k}+\overline{u}_{2,k}) \overline{\eta}_{k}dx+o( e^{-\frac{C\delta_{k}}{\overline{\epsilon}_{k}}})\\
&=p(1+o(1))\overline{\epsilon}_{k}^{p+4}\int_{\mathbb R^2}
V_{1}(x+y_{0})Q \overline{\eta}_{0}dx,
\endaligned
\end{equation}
and
\begin{equation}\label{eq4.49}
\aligned
K_{4}&=\overline{\epsilon}_{k}^{4}\int_{B_{\frac{\delta_{k}}{\overline{\epsilon}_{k}}}(0)}
\frac{V(\overline{\epsilon}_{k}x+z_{1,k}-x_{1}+x_{1})}{V_{1}(\overline{\epsilon}_{k}x+z_{1,k}-x_{1})}V_{1}(\overline{\epsilon}_{k}x+z_{1,k}-x_{1})
(\overline{u}_{1,k}+\overline{u}_{2,k}) \overline{\eta}_{k}dx\\
&=\overline{\epsilon}_{k}^{p+4}\int_{B_{\frac{\delta_{k}}{\overline{\epsilon}_{k}}}(0)}
\frac{V(\overline{\epsilon}_{k}x+z_{1,k}-x_{1}+x_{1})}{V_{1}(\overline{\epsilon}_{k}x+z_{1,k}-x_{1})}
V_{1}\left(x+\frac{z_{1,k}-x_{1}}{\overline{\epsilon}_{k}}\right)
(\overline{u}_{1,k}+\overline{u}_{2,k}) \overline{\eta}_{k}dx\\
&=2(1+o(1))\overline{\epsilon}_{k}^{p+4}\int_{\mathbb R^2}V_{1}(x+y_{0})Q \overline{\eta}_{0}dx,
\endaligned
\end{equation}
where $W(x)=(W_{1}(x),W_{2}(x))$.

Therefore, $d_{0}=0$ in \eqref{eq4.15} and $h_{0}=\overline{h}_{0}=0$ in \eqref{eq4.15*} can be obtained by considering the cases of $a=a^{\ast}$ and $a>a^{\ast}$, respectively.

If $a=a^\ast$, it follow from \eqref{eq1.14}, \eqref{eq1.21} and \eqref{eq4.10} that
\begin{align}\label{eq4.50}
%\aligned
& \int_{\mathbb R^2}V_{1}(x+y_{0})Q \overline{\eta}_{0}dx
=\int_{\mathbb R^2}V_{1}(x+y_{0})Q\left[d_{0}(Q+x\cdot\nabla Q)+\sum_{i=1}^{2}d_{i}\frac{\partial Q}{\partial x_{i}}\right]dx \nonumber\\
&=d_{0}\int_{\mathbb R^2}V_{1}(x+y_{0})Q(Q+x\cdot\nabla Q)dx+\sum_{i=1}^{2}\frac{d_{i}}{2}\int_{\mathbb R^2}V_{1}(x+y_{0})\frac{\partial Q^{2}}{\partial x_{i}}dx \nonumber\\
&=d_{0}\int_{\mathbb R^2}V_{1}(x+y_{0})Q^{2}dx+\frac{d_{0}}{2}\int_{\mathbb R^2}V_{1}(x+y_{0})(x\cdot\nabla Q^{2})dx -\sum_{i=1}^{2}\frac{d_{i}}{2}\int_{\mathbb R^2}\frac{\partial V_{1}(x+y_{0})}{\partial x_{i}}Q^{2}dx \nonumber\\
&=d_{0}\int_{\mathbb R^2}V_{1}(x+y_{0})Q^{2}dx-\frac{d_{0}}{2}\int_{\mathbb R^2}Q^{2}[2V_{1}(x+y_{0})+x\cdot\nabla V_{1}(x+y_{0})]dx \nonumber\\
&=-\frac{pd_{0}}{2}\int_{\mathbb R^2}V_{1}(x+y_{0})Q^{2}dx+\frac{d_{0}}{2}\int_{\mathbb R^2}Q^{2}[y_{0}\cdot\nabla V_{1}(x+y_{0})]dx =-\frac{pd_{0}\lambda_{0}}{2},
%\endaligned
\end{align}
where $\lambda_{0}$ is given by \eqref{eq1.17}. Moreover, \eqref{eq4.10} implies that
\begin{equation}\label{eq4.51*}
\aligned
&\int_{\mathbb R^{2}}\nabla Q\cdot\nabla \overline{\eta}_{0}dx
=\int_{\mathbb R^{2}}\nabla Q\cdot\nabla \left[d_{0}(Q+x\cdot\nabla Q)+\sum_{i=1}^{2}d_{i}\frac{\partial Q}{\partial x_{i}}\right]dx\\
&=d_{0}\int_{\mathbb R^{2}}|\nabla Q|^{2}dx+d_{0}\int_{\mathbb R^{2}}\nabla Q\cdot\nabla(x\cdot\nabla Q)dx
+\sum_{i=1}^{2}d_{i}\int_{\mathbb R^{2}}\nabla Q\cdot\nabla\frac{\partial Q}{\partial x_{i}}dx\\
&=a^\ast d_{0}.
\endaligned
\end{equation}
Therefore, for the case of $a=a^\ast$, we see from \eqref{eq1.20}, \eqref{eq4.45}--\eqref{eq4.51*} that
\begin{equation}\label{eq4.52}
\aligned
o( e^{-\frac{C\delta_{k}}{\overline{\epsilon}_{k}}})&=\frac{b_k}{2a^\ast}\int_{\mathbb R^{2}}(|\nabla \widehat{u}_{1,k}|^{2}+|\nabla \widehat{u}_{2,k}|^{2})dx
\int_{\mathbb R^{2}}\nabla(\widehat{u}_{1,k}+\widehat{u}_{2,k})\cdot\nabla \widehat{\eta}_{k}dx-K_{2}-K_{4}\\
&=pd_{0}\lambda_{0}(1+o(1))\overline{\epsilon}_{k}^{p+4}
+\frac{p^{2}d_{0}\lambda_{0}}{2}(1+o(1))\overline{\epsilon}_{k}^{p+4}
+pd_{0}\lambda_{0}(1+o(1))\overline{\epsilon}_{k}^{p+4}\\
&=\frac{pd_{0}\lambda_{0}(p+4)}{2}(1+o(1))\overline{\epsilon}_{k}^{p+4}.
\endaligned
\end{equation}

On the other hand, if $a>a^\ast$, it follows from \eqref{eq1.14}, \eqref{eq1.21} and \eqref{eq4.10*} that
\begin{align}\label{eq4.50*}
%\aligned
&\int_{\mathbb R^2}V_{1}(x+y_{0})Q \overline{\eta}_{0}dx
=\int_{\mathbb R^2}V_{1}(x+y_{0})Q\left(h_{0}Q+\overline{h}_{0}x\cdot\nabla Q+\sum_{i=1}^{2}h_{i}\frac{\partial Q}{\partial x_{i}}\right)dx \nonumber \\
& =h_{0}\int_{\mathbb R^2}V_{1}(x+y_{0})Q^{2}dx+\frac{\overline{h}_{0}}{2}\int_{\mathbb R^2}V_{1}(x+y_{0})(x\cdot\nabla Q^{2})dx % \nonumber \\
%&\quad
-\sum_{i=1}^{2}\frac{h_{i}}{2}\int_{\mathbb R^2}\frac{\partial V_{1}(x+y_{0})}{\partial x_{i}}Q^{2}dx \nonumber \\
&=h_{0}\int_{\mathbb R^2}V_{1}(x+y_{0})Q^{2}dx-\frac{\overline{h}_{0}}{2}\int_{\mathbb R^2}Q^{2}[2V_{1}(x+y_{0})+x\cdot\nabla V_{1}(x+y_{0})]dx \nonumber \\
&=(h_{0}-\overline{h}_{0})\int_{\mathbb R^2}V_{1}(x+y_{0})Q^{2}dx-\frac{\overline{h}_{0}}{2}\int_{\mathbb R^2}Q^2[(x+y_{0})\cdot\nabla V_{1}(x+y_{0})]dx \nonumber \\
&\hspace{9cm}
+\frac{\overline{h}_{0}}{2}\int_{\mathbb R^2}Q^{2}[y_{0}\cdot\nabla V_{1}(x+y_{0})]dx \nonumber \\
&=\frac{2h_{0}-2\overline{h}_{0}-p\overline{h}_{0}}{2}\int_{\mathbb R^2}V_{1}(x+y_{0})Q^{2}dx
=\frac{(2h_{0}-2\overline{h}_{0}-p\overline{h}_{0})\lambda_{0}}{2}.
%\endaligned
\end{align}
Moreover, \eqref{eq4.10*} implies that
\begin{align}\label{eq4.51**}
%\aligned
&\int_{\mathbb R^{2}}\nabla Q\cdot\nabla \overline{\eta}_{0}dx
=\int_{\mathbb R^{2}}\nabla Q\cdot\nabla \left[h_{0}Q+\overline{h}_{0}x\cdot\nabla Q+\sum_{i=1}^{2}h_{i}\frac{\partial Q}{\partial x_{i}}\right]dx \nonumber\\
&=h_{0}\int_{\mathbb R^{2}}|\nabla Q|^{2}dx+\overline{h}_{0}\int_{\mathbb R^{2}}\nabla Q\cdot\nabla(x\cdot\nabla Q)dx
+\sum_{i=1}^{2}h_{i}\int_{\mathbb R^{2}}\nabla Q\cdot\nabla\frac{\partial Q}{\partial x_{i}}dx
=a^\ast h_{0}.
%\endaligned
\end{align}
Hence, for the case of $a>a^\ast$, we see from \eqref{eq1.20}, \eqref{eq4.45}--\eqref{eq4.49} and  \eqref{eq4.50*}--\eqref{eq4.51**} that
\begin{equation}\label{eq4.53}
\aligned
o( e^{-\frac{C\delta_{k}}{\overline{\epsilon}_{k}}})&=\frac{b_k}{2a^\ast}\int_{\mathbb R^{2}}(|\nabla \widehat{u}_{1,k}|^{2}+|\nabla \widehat{u}_{2,k}|^{2})dx
\int_{\mathbb R^{2}}\nabla(\widehat{u}_{1,k}+\widehat{u}_{2,k})\cdot\nabla \widehat{\eta}_{k}dx-K_{2}-K_{4}\\
&=2h_{0}(a-a^\ast)(1+o(1))\overline{\epsilon}_{k}^{2}
-\frac{(2h_{0}-2\overline{h}_{0}-p\overline{h}_{0})p\lambda_{0}}{2}(1+o(1))\overline{\epsilon}_{k}^{p+4}\\
&\quad\quad-(2h_{0}-2\overline{h}_{0}-p\overline{h}_{0})\lambda_{0}(1+o(1))\overline{\epsilon}_{k}^{p+4}\\
&=2h_{0}(a-a^\ast)(1+o(1))\overline{\epsilon}_{k}^{2}.
\endaligned
\end{equation}

So, $d_{0}=h_{0}=0$ follows from \eqref{eq4.52} and \eqref{eq4.53}.

Moreover, for the case of $a>a^\ast$, it follows from \eqref{eq4.14*}, \eqref{eq4.51**} and $h_{0}=0$ that $\overline{\eta}_{0}$ satisfies \eqref{eq4.14}. Hence, $\overline{\eta}_{0}$ must be of the form of \eqref{eq4.10} and $h_{0}=\overline{h}_{0}=0$.
Particularly, using \eqref{eq4.15} and \eqref{eq4.15*}, we know that
\begin{equation*}
\sum_{i=1}^{2}d_{i}\int_{\mathbb R^{2}}\frac{\partial^{2} V_{1}(x+y_{0})}{\partial x_{i}\partial x_{j}}Q^{2}dx=0,
\end{equation*}
and
\begin{equation*}
\sum_{i=1}^{2}h_{i}\int_{\mathbb R^{2}}\frac{\partial^{2} V_{1}(x+y_{0})}{\partial x_{i}\partial x_{j}}Q^{2}dx=0.
\end{equation*}
This indicate that $d_{1}=d_{2}=0$ and $h_{1}=h_{2}=0$ due to the non-degeneracy assumption \eqref{eq1.21}, hence $\overline{\eta}_{0}\equiv0$ for $a\geq a^\ast$.

On the other hand, if $a\geq a^\ast$, we claim that $\overline{\eta}_{0}\equiv0$ can not occur. Indeed, let $y_{k}\in\mathbb R^2$ be the maximum point of $\overline{\eta}_{k}$, where $\overline{\eta}_{k}(y_{k})=||\overline{\eta}_{k}||_{L^{\infty}}=1.$ Applying the maximum principle to \eqref{eq4.5}, we see that $|y_{k}|\leq C$ for all $k$ due to the exponential decay as \eqref{eq4.7}. Therefore, \eqref{eq4.9} implies that $\overline{\eta}_{0}\not\equiv0$ on $\mathbb R^2$. So, our assumption that $u_{1,k}\not\equiv u_{2,k}$ is false, and we complete the proof of Theorem \ref{th4}.
\end{proof}

\end{document}